\pgfplotsset{compat=1.18}
\tikzstyle{EDR}=[draw=lightgray,line width=0pt,preaction={clip, postaction={pattern=north east lines, pattern color=gray}}]
\tikzstyle{EDR1}=[draw=lightgray,line width=0pt,preaction={clip, postaction={pattern=north west lines, pattern color=gray}}]
\definecolor{mygray}{gray}{0.95}
\definecolor{mypink1}{rgb}{1.2,1.1,0.9}
\definecolor{mypink2}{rgb}{1.0,0.95 ,0.9}
\definecolor{mypink3}{rgb}{1.0,0.6,0.7}
\numberwithin{equation}{section}
\newtheorem{theorem}{Theorem}[section]
\newtheorem{definition}[theorem]{Definition}
\newtheorem{lemma}[theorem]{Lemma}
\newtheorem{corollary}[theorem]{Corollary}
\newtheorem{conjecture}[theorem]{Conjecture}
\newtheorem{proposition}[theorem]{Proposition}
\theoremstyle{remark}
\newtheorem{remark}{Remark}
\numberwithin{equation}{section}
\newcommand{\R}{\mathbb R}
\newcommand{\N}{\mathbb N}
\newcommand{\supp}{\operatorname{supp}\,}
\newcommand{\beq}{\begin{equation}}
	\newcommand{\eeq}{\end{equation}}
\newcommand{\beqq}{\begin{equation*}}
	\newcommand{\eeqq}{\end{equation*}}
\newcommand{\ben}{\begin{eqnarray}}
	\newcommand{\een}{\end{eqnarray}}
\newcommand{\beno}{\begin{eqnarray*}}
	\newcommand{\eeno}{\end{eqnarray*}}
\newcommand{\n}{\nabla}
\begin{document}

\title[Restriction Estimates for Eigenfunctions]   
	{Refined $L^p$ restriction estimates for eigenfunctions on Riemannian surfaces}
 \author{Chuanwei Gao}
	\address{School of Mathematical Sciences, Capital Normal University, Beijing 100048, China}
	\email{cwgao@cnu.edu.cn}
	
	\author{Changxing Miao}
	\address{Institute for Applied Physics and Computational Mathematics, Beijing, China}
	\email{miao\textunderscore changxing@iapcm.ac.cn}
	\author{Yakun Xi}
	\address{School of Mathematical Sciences, Zhejiang University, Hangzhou 310027, PR China}
	\email{yakunxi@zju.edu.cn}
	\begin{abstract}
We refine the $L^p$ restriction estimates for Laplace eigenfunctions on a Riemannian surface, originally established by Burq, Gérard, and Tzvetkov \cite{BGT}. First, we establish estimates for the restriction of eigenfunctions to arbitrary Borel subsets of the surface, following the formulation of {Eswarathasan and Pramanik \cite{EP}}. We achieve this by proving a variable-coefficient analogue of a weighted Fourier extension estimate by Du and Zhang \cite{DZ}. Our results naturally unify the $L^p(M)$ estimates of Sogge \cite{sogge1} and the $L^p(\gamma)$ restriction bounds of Burq, Gérard, and Tzvetkov, and are sharp for all $p \geq 2$, up to a $\lambda^\varepsilon$ loss. Second, we derive sharp estimates for the restriction of eigenfunctions to tubular neighborhoods of a curve with nonvanishing geodesic curvature. These estimates are closely related to a variable-coefficient version of the Mizohata--Takeuchi conjecture, providing new insights into eigenfunction concentration phenomena.
\end{abstract}
\maketitle

\section{Introduction}	
\subsection{Restriction of eigenfunctions to fractal measures}

Let $(M,g)$ be a compact, boundaryless, two-dimensional Riemannian manifold with metric $g$.  
Let $e_\lambda$ be an eigenfunction of $-\Delta_g$ corresponding to the eigenvalue $\lambda^2$, satisfying
\begin{equation}
-\Delta_g e_\lambda = \lambda^2 e_\lambda.
\end{equation}
We assume that $e_\lambda$ is normalized so that $\|e_\lambda\|_{L^2(M)} = 1$.

There are several ways to quantify the concentration of eigenfunctions. One approach is to study the size of their $L^p$ norms for $p > 2$. In \cite{sogge1}, Sogge proved that
\begin{equation}\label{soggees}
    \|e_\lambda\|_{L^p(M)} \leq C\lambda^{\delta_2(p)} \|e_\lambda\|_{L^2(M)},
\end{equation}
where
\begin{equation*}
\delta_2(p) =
\begin{cases}
         \tfrac{1}{2}\bigl(\tfrac{1}{2} - \tfrac{1}{p}\bigr), & 2 \leq p \leq 6, \\[6pt]
         \tfrac{1}{2} - \tfrac{2}{p}, & 6 \leq p \leq \infty.
\end{cases}
\end{equation*}
These bounds are sharp on the standard sphere $S^2$ and reveal two distinct types of eigenfunction concentration.

The eigenvalues of $\sqrt{-\Delta_g}$ on $S^2$ are $\sqrt{k^2 + k}$, each with multiplicity $d_k = 2k + 1$. The corresponding eigenspace $\mathcal{E}_k$ consists of spherical harmonics of degree $k$, obtained by restricting homogeneous harmonic polynomials of degree $k$ to the unit sphere. Let $\Pi_k(x, y)$ denote the kernel of the spectral projection operator onto $\mathcal{E}_k$. The $k$-th $L^2$-normalized zonal function centered at $x_0 \in S^2$ is
\[
\mathcal Z_k(y) = \bigl(\Pi_k(x_0, x_0)\bigr)^{-1/2} \Pi_k(x_0, y).
\]
The function $\mathcal Z_k$ is sharply peaked at the poles $\pm x_0$, where it attains the value $\sqrt{d_k / 4\pi}$. It follows that $\|\mathcal Z_k\|_{L^p(S^2)} \sim k^{\delta_2(p)}$ for $p \geq 6$. Hence, on the standard sphere, Sogge’s bounds are optimal in this range of $p$.

A different type of concentration is exhibited by the highest-weight spherical harmonic $\mathcal H_k$, whose mass is concentrated along the equator
\[
\gamma_0 = \{(x_1, x_2, 0) : x_1^2 + x_2^2 = 1\}.
\]
The function $\mathcal H_k$ is the restriction of the harmonic polynomial $k^{1/4}(x_1 + i x_2)^k$ to $S^2 = \{x : |x| = 1\}$. In this normalization, $\mathcal H_k$ has $L^2$ norm comparable to one, and its $L^p$ norms satisfy $\|\mathcal H_k\|_{L^p(S^2)} \sim k^{\delta_2(p)}$ for $2 \leq p \leq 6$.

Another way to study the concentration of eigenfunctions is to examine their restriction to submanifolds. In \cite{BGT}, Burq--Gérard--Tzvetkov investigated the restriction of $e_\lambda$ to submanifolds of compact Riemannian manifolds. In particular, for a smooth curve segment on a Riemannian surface, they proved the following result.

\begin{theorem}[{\cite{BGT}}]\label{cor1}
Let $\gamma \subset M$ be a smooth curve segment. Then there exists a constant $C$ such that
\begin{equation}\label{eqcor}
    \|e_\lambda\|_{L^p(\gamma)} \leq C(1 + \lambda)^{\delta_1(p)} \|e_\lambda\|_{L^2(M)},
\end{equation}
where
\begin{equation}\label{eq:2}
    \delta_1(p) =
    \begin{cases}
        \tfrac{1}{4}, & 2 \leq p \leq 4, \\[6pt]
        \tfrac{1}{2} - \tfrac{1}{p}, & 4 \leq p \leq \infty.
    \end{cases}
\end{equation}
\end{theorem}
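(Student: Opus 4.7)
The plan is to prove the stronger spectral cluster bound
\[
\|\chi_\lambda f\|_{L^p(\gamma)}\ \lesssim\ (1+\lambda)^{\delta_1(p)}\|f\|_{L^2(M)},
\]
where $\chi_\lambda$ is a smooth spectral projector onto the window $[\lambda,\lambda+1]$ of $\sqrt{-\Delta_g}$; since $e_\lambda=\chi_\lambda e_\lambda$, \eqref{eqcor} is immediate. Denoting the restriction-and-project operator by $T_\lambda f=(\chi_\lambda f)\circ\gamma$, I would pass to the $T_\lambda T_\lambda^*$ formulation and study
\[
T_\lambda T_\lambda^*:L^{p'}(\gamma)\to L^p(\gamma),\qquad \text{with kernel }K_\lambda(s,t)\approx \chi_\lambda(\gamma(s),\gamma(t)).
\]

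The analytic core of the argument is a pointwise estimate for $K_\lambda$. I would write $\chi_\lambda=\int \widehat{\rho}(t)\,e^{it(\sqrt{-\Delta_g}-\lambda)}\,dt$ for a smooth cutoff $\rho$ with $\widehat{\rho}$ supported in a small time window, and invoke the Hadamard/Fourier integral operator parametrix for the half-wave propagator to obtain, on scales below the injectivity radius,
\[
\chi_\lambda(x,y)=\lambda^{1/2}\,\mathrm{Re}\bigl(e^{i\lambda d_g(x,y)}\,a_\lambda(x,y)\bigr)+R_\lambda(x,y),
\]
with $a_\lambda$ a zeroth-order symbol satisfying $|a_\lambda(x,y)|\lesssim d_g(x,y)^{-1/2}$ and $R_\lambda$ a negligible remainder. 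Restricting to the curve, and using that $d_g(\gamma(s),\gamma(t))\sim|s-t|$ locally, then yields the sharp kernel bound
\[
|K_\lambda(s,t)|\ \lesssim\ \min\bigl(\lambda,\ \lambda^{1/2}|s-t|^{-1/2}\bigr).
\]

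Given this bound, the three endpoints are quick. For $p=\infty$, Sogge's pointwise bound $\|\chi_\lambda f\|_{L^\infty(M)}\lesssim \lambda^{1/2}\|f\|_{L^2(M)}$ gives $\delta_1(\infty)=1/2$. For $p=2$, Schur's test applied to $K_\lambda$ gives $\int|K_\lambda(s,t)|\,dt\lesssim\lambda^{1/2}$, whence $\|T_\lambda\|_{L^2\to L^2}\lesssim\lambda^{1/4}$. For the crucial $p=4$ endpoint, the one-dimensional Hardy--Littlewood--Sobolev inequality applied to the majorant $|s-t|^{-1/2}$ (using the balance $3/4-1/4=1/2$) delivers $\|T_\lambda T_\lambda^*\|_{L^{4/3}\to L^4}\lesssim\lambda^{1/2}$, hence $\|T_\lambda\|_{L^2\to L^4}\lesssim\lambda^{1/4}$. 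Complex interpolation between the endpoints $(2,1/4)$, $(4,1/4)$, and $(\infty,1/2)$ then recovers the full range in \eqref{eq:2}.

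The main obstacle is the parametrix step: producing the FIO/Hadamard representation of $\chi_\lambda$ with the indicated amplitude behaviour and uniform-in-$\lambda$ error control. This forces one to choose the time cutoff $\widehat{\rho}$ supported in a window smaller than the injectivity radius so that the Hamilton flow remains a local diffeomorphism, and then to handle the long-time contribution separately via spectral theory as a harmless error. Once this microlocal input is in hand, everything else is a clean reduction of the restriction problem to a one-dimensional fractional-integration/Schur estimate to which classical tools apply.
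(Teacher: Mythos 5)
Your proposal is correct, and it follows essentially the classical Burq--G\'erard--Tzvetkov strategy: reduce to a spectral-cluster estimate for $\chi_\lambda$, pass to $T_\lambda T_\lambda^*$, invoke the Hadamard/half-wave parametrix to obtain the oscillatory kernel $\lambda^{1/2}e^{i\lambda d_g(x,y)}a(x,y)$ and hence the pointwise majorant $|K_\lambda(s,t)|\lesssim\min(\lambda,\lambda^{1/2}|s-t|^{-1/2})$, then conclude by Schur's test at $p=2$, Hardy--Littlewood--Sobolev (plus Young near the diagonal) at $p=4$, the Sogge sup-norm bound at $p=\infty$, and interpolation. One small caveat on bookkeeping: in the standard parametrix (as in the paper's Lemma 2.1) the amplitude is supported away from the diagonal; the near-diagonal regime is handled by the trivial $|\chi_\lambda(x,y)|\lesssim\lambda$ bound rather than by letting the amplitude blow up like $d_g(x,y)^{-1/2}$, but the combined kernel estimate you state is the correct one either way.

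Note that the paper does not give its own proof of this statement---it cites \cite{BGT}---so there is no internal argument to compare against. It is worth observing, though, that the paper's main result (Theorem~\ref{theomain}) specializes at $\alpha=1$ to the same exponents $\delta_1(p)$, but with a $\lambda^\varepsilon$ loss, and is proved by completely different machinery: a wave packet decomposition, the Bourgain--Guth broad--narrow dichotomy, and a variable-coefficient bilinear/Du--Zhang-type weighted estimate. The $TT^*$/parametrix route you take is the only one that removes the $\varepsilon$-loss, while the paper's multiscale approach buys the much more general fractal-weight statement.
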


Burq--Gérard--Tzvetkov further showed that the exponent in \eqref{eq:2} is sharp on $S^2$ when $\gamma$ is a geodesic. Although this bound cannot be improved on $S^2$, several sharper estimates are known for manifolds with nonpositive curvature (see, for example, \cite{blair2018concerning, xi2017improved}).

In this paper, we study the restriction of eigenfunctions to Borel subsets of $M$, following the framework introduced by Eswarathasan and Pramanik \cite{EP}. To formulate results in this setting, we use the notion of \emph{$\alpha$-dimensional measures}.

\begin{definition}
    A Borel probability measure $\mu$ on $M$ is said to be $\alpha$-dimensional if there exists a constant $C > 0$ such that
\begin{equation}
    \mu(B(x, r)) \leq C r^\alpha, \quad \forall x \in M, 0 < r < \operatorname{inj} M,
\end{equation}
where $B(x, r)$ denotes the geodesic ball of radius $r$ centered at $x$, and $\operatorname{inj} M$ is the injectivity radius of $M$.
\end{definition}

Eswarathasan and Pramanik \cite{EP} obtained restriction estimates for eigenfunctions on arbitrary Borel sets in all dimensions. In two dimensions, for any $\alpha$-dimensional probability measure $\mu$ on $M$, their results yield the following.

\begin{theorem}[\cite{EP}]\label{EPbound}
Let $\alpha \in (0, 2]$, and let $\mu$ be an $\alpha$-dimensional probability measure on $M$. Then for any $\varepsilon > 0$ there exists $C_\varepsilon > 0$ such that
\begin{equation}\label{eq EP}
   \|e_\lambda\|_{L^p(M;\, d\mu)} \leq C_\varepsilon \lambda^{\sigma(\alpha, p) + \varepsilon} \|e_\lambda\|_{L^2(M)},
\end{equation}
where
\[
\sigma(\alpha, p) =
\begin{cases}
    \tfrac{1}{2} - \tfrac{\alpha}{p}, & 0 < \alpha \leq \tfrac{1}{2},\quad 2 \leq p \leq \infty, \\[6pt]
    \tfrac{1}{4}, & \tfrac{1}{2} < \alpha \leq 2,\quad 2 \leq p \leq 4\alpha, \\[6pt]
    \tfrac{1}{2} - \tfrac{\alpha}{p}, & \tfrac{1}{2} < \alpha \leq 2,\quad 4\alpha < p \leq \infty.
\end{cases}
\]
Moreover, \eqref{eq EP} is sharp on $S^2$, up to a $\lambda^\varepsilon$ loss, for $p \geq \max\{2, 4\alpha\}$.
\end{theorem}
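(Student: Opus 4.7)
The plan is to reduce the eigenfunction restriction inequality to an operator estimate on the unit spectral cluster $\chi_\lambda$, invoke the Hörmander parametrix to represent $\chi_\lambda$ as an oscillatory integral operator, and then interpolate between three endpoints: Sogge's pointwise $L^\infty$ bound, a Schur-test bound at $L^2(d\mu)$ driven by the Frostman hypothesis, and a critical $L^{4\alpha}(d\mu)$ bound proved by a Córdoba-style $L^4$ orthogonality argument adapted to the parametrix.

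Since $e_\lambda = \chi_\lambda e_\lambda$, it suffices to prove the operator bound $\|\chi_\lambda\|_{L^2(M)\to L^p(d\mu)} \leq C_\varepsilon \lambda^{\gamma(\alpha,p)+\varepsilon}$. By a $TT^*$ argument this reduces to estimating the operator with kernel $K_\lambda(x,y) = \lambda^{1/2}\, a(x,y,\lambda)\, e^{i\lambda d_g(x,y)} + O(\lambda^{-N})$ for $d_g(x,y) \geq \lambda^{-1}$, where $|a(x,y,\lambda)| \leq C d_g(x,y)^{-1/2}$; on the diagonal $d_g(x,y) \leq \lambda^{-1}$ the local Weyl law gives $|K_\lambda| \leq C\lambda$. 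The $p=\infty$ endpoint is immediate from Sogge's bound $\|e_\lambda\|_{L^\infty(M)} \leq C\lambda^{1/2}$. For $p = 2$, I would apply the Schur test on $L^2(d\mu)$, which reduces to controlling
\[
\sup_x \int |K_\lambda(x,y)|\, d\mu(y) \leq C\lambda\cdot\mu(B(x,\lambda^{-1})) + C\lambda^{1/2}\int_{\lambda^{-1}}^1 r^{-1/2}\, d\mu(B(x,r));
\]
combining the Frostman hypothesis $\mu(B(x,r)) \leq Cr^\alpha$ with integration by parts yields $O(\lambda^{1-\alpha})$ when $\alpha < 1/2$ and $O(\lambda^{1/2})$ when $\alpha \geq 1/2$, matching $\gamma(\alpha,2)$ in each branch after taking square roots.

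The remaining content is the critical estimate
\[
\|\chi_\lambda f\|_{L^{4\alpha}(d\mu)} \leq C_\varepsilon \lambda^{1/4+\varepsilon}\|f\|_{L^2(M)}, \qquad \alpha \in (1/2, 2],
\]
which simultaneously generalizes the Burq--Gérard--Tzvetkov curve bound ($\alpha = 1$, $p=4$) and Sogge's $L^8$ bound ($\alpha = 2$, $p = 8$). I would attack this by decomposing $\chi_\lambda = \sum_\theta T_\theta$ into microlocal pieces associated to $\lambda^{-1/2}$-angular caps on the characteristic cone, so that each $T_\theta f$ is essentially concentrated in a tube of dimensions $1 \times \lambda^{-1/2}$. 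Expanding $|\chi_\lambda f|^2 = \sum_{\theta_1, \theta_2} T_{\theta_1}f\, \overline{T_{\theta_2}f}$, off-diagonal pairings enjoy oscillatory cancellation proportional to $|\theta_1 - \theta_2|$, while the $\alpha$-dimensional hypothesis controls the number of such tubes meeting a $\mu$-typical ball. Balancing these two inputs produces the critical exponent $p = 4\alpha$, and Riesz--Thorin interpolation between the three endpoints fills in $\gamma(\alpha,p)$ throughout the full range.

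The principal obstacle is the critical $L^{4\alpha}(d\mu)$ estimate in the variable-coefficient setting: the Córdoba argument is naturally Euclidean, and transplanting it to a Riemannian surface requires a parametrix-friendly wave-packet decomposition at scale $\lambda^{-1/2}$ together with a careful transversality analysis for tubes arising from the geodesic flow. The $\lambda^\varepsilon$ loss is the unavoidable cost of dyadic pigeonholing over tube-concentration scales and of the absence of a sharp variable-coefficient Kakeya bound in two dimensions.
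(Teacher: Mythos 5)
The statement you are proving is cited by the paper from Eswarathasan--Pramanik; the paper itself gives no proof, remarking only that the original argument is ``based on a $TT^*$ argument.'' Your proposal is therefore a genuinely different route, and I'll assess it on its own terms.

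Your two easy endpoints are fine. The $L^\infty$ bound is Sogge's $\lambda^{1/2}$, and the $L^2(d\mu)$ Schur test is correct: writing the $TT^*$ kernel as $\lambda$ on $d_g(x,y)\lesssim\lambda^{-1}$ and $\lambda^{1/2}d_g(x,y)^{-1/2}$ off-diagonal, the dyadic sum $\lambda^{1-\alpha}+\lambda^{1/2}\sum_{j}2^{j(1/2-\alpha)}$ gives $\lambda^{1-\alpha}$ when $\alpha<\tfrac12$ and $\lambda^{1/2}$ when $\alpha\ge\tfrac12$, whose square roots match $\gamma(\alpha,2)$ in each branch. Your interpolation scheme is also the right structure: the $L^2$--$L^\infty$ interpolant gives $\tfrac12-\tfrac1{2p}$, which is strictly worse than $\tfrac12-\tfrac{\alpha}{p}$ for $\alpha>\tfrac12$, so you genuinely need the intermediate estimate at $p=4\alpha$.

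The gap is exactly in that intermediate estimate, and it is not a small one. A C\'ordoba-style orthogonality argument is an $L^4$ device; it does not scale to $L^{4\alpha}(d\mu)$ for $\alpha\in(\tfrac12,2]$, and you offer no mechanism for how ``balancing'' the oscillatory gain against the $\alpha$-dimensional tube count is supposed to output the exponent $4\alpha$ rather than, say, $4$. The case $\alpha=2$ makes the problem concrete: there $4\alpha=8$ and $d\mu=dx$, so you would be trying to derive Sogge's $L^8(M)$ bound from $L^4$ bilinear orthogonality alone. That does not work; $L^8$ is obtained by interpolating the critical $L^6$ (Tomas--Stein / complex interpolation) endpoint with $L^\infty$, not from $L^4$. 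You cannot patch this by H\"older against $L^\infty$, either: $\|\chi_\lambda f\|_{L^8}^8\le\|\chi_\lambda f\|_{L^\infty}^{4}\|\chi_\lambda f\|_{L^4}^4$ overshoots by a power of $\lambda$.

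In fact, the separate critical endpoint is unnecessary. The $TT^*$ operator $\chi_\lambda\chi_\lambda^*$ maps $L^{p'}(d\mu)\to L^p(d\mu)$, and each dyadic kernel piece $K_j$ supported on $d_g(x,y)\sim 2^{-j}$ with size $\lambda^{1/2}2^{j/2}$ satisfies, by interpolating the Schur $L^2\to L^2$ bound $\lambda^{1/2}2^{j/2}\cdot 2^{-j\alpha}$ against the trivial $L^1\to L^\infty$ bound $\lambda^{1/2}2^{j/2}$,
\[
\|K_j\|_{L^{p'}(d\mu)\to L^p(d\mu)}\lesssim \lambda^{1/2}\,2^{j(\frac12-\frac{2\alpha}{p})}.
\]
Summing over $1\le 2^j\le\lambda$ and adding the diagonal contribution $\lambda\cdot\lambda^{-2\alpha/p}$ gives $\lambda^{1/2}$ when $p\le 4\alpha$ and $\lambda^{1-2\alpha/p}$ when $p>4\alpha$, which after a square root is exactly $\gamma(\alpha,p)$ for the whole range at once. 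No oscillatory cancellation and no bilinear input are needed: $4\alpha$ is simply the exponent at which this geometric series flips, and this is precisely why Eswarathasan--Pramanik's bound is not sharp in the regime $\alpha>1$ and is superseded by Theorem~\ref{theomain}. A wave-packet/bilinear argument carried out correctly would instead land at the critical exponent $2\alpha+2$, not $4\alpha$, so the decomposition you propose is aimed at proving the stronger result in this paper rather than the cited EP bound; as written, it is not an argument for either.
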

We note that the original statement in \cite{EP} is stronger than the streamlined formulation in Theorem \ref{EPbound}, since the factor $\lambda^\varepsilon$ can be removed for all $p\in[2,\infty)$ except at the endpoint $p=4\alpha$, where it can be replaced by a logarithmic loss. As our focus is on the exponents and thresholds in $p$ and $\alpha$, we do not track the precise loss here.

For $p \ge 4\alpha$, these bounds interpolate between Sogge's $L^p(M)$ bounds and the curve restriction estimates of Burq--Gérard--Tzvetkov. In this range,
\[
\sigma(2,p)=\tfrac{1}{2}-\tfrac{2}{p}=\delta_2(p)
\quad\text{and}\quad
\sigma(1,p)=\tfrac{1}{2}-\tfrac{1}{p}=\delta_1(p).
\]
For $\alpha=1$ the critical exponent is $p=4$, agreeing with \eqref{eq:2}. For $\alpha=2$, the critical value $p=8$ does not match Sogge's critical exponent $p=6$. Furthermore, Theorem \ref{EPbound} is not sharp for every pair $(\alpha,p)$ with $\alpha \in (1,2]$ and $p\in[2,4\alpha)$.

These comparisons leave a gap when $\alpha\in(1,2]$ and $2\le p<4\alpha$. Our primary objective is to establish sharp bounds, up to a $\lambda^\varepsilon$ loss, for all $p \ge 2$ and $\alpha \in (0,2]$. Our main result is as follows.

\begin{theorem}\label{theomain}
Let $\alpha \in (0, 2]$, and let $\mu$ be an $\alpha$-dimensional probability measure on $M$. Then for any $\varepsilon > 0$, there exists $C_\varepsilon > 0$ such that
\begin{equation}\label{eq main1}
    \|e_\lambda\|_{L^p(M;\, d\mu)} \leq C_\varepsilon \lambda^{\delta(\alpha, p) + \varepsilon} \|e_\lambda\|_{L^2(M)},
\end{equation}
where 
\begin{equation}\label{eq delta}
    \delta(\alpha, p) = \begin{cases}
    \frac{1}{2} - \frac{\alpha}{p}, & 0 < \alpha \leq \frac{1}{2}, \quad 2 \leq p \leq \infty, \\
    \frac{1}{4}, & \frac{1}{2} < \alpha \leq 1, \quad 2 \leq p \leq 4\alpha, \\
    \frac{1}{2} - \frac{\alpha}{p}, & \frac{1}{2} < \alpha \leq 1, \quad 4\alpha < p \leq \infty, \\
    \frac{1}{4} - \frac{\alpha - 1}{2p}, & 1 < \alpha \leq 2, \quad 2 \leq p \leq 2\alpha + 2, \\
    \frac{1}{2} - \frac{\alpha}{p}, & 1 < \alpha \leq 2, \quad 2\alpha + 2 < p \leq \infty.
\end{cases}
\end{equation}
Moreover, \eqref{eq main1} is sharp on $S^2$, modulo $\lambda^\varepsilon$ losses, for all $p \geq 2$.
\end{theorem}

\begin{figure}[htbp]
\centering

\begin{minipage}{\textwidth}
\centering
\begin{tikzpicture}
 \begin{axis}[
      axis lines=left,
      clip=false,
      axis line style={->, line width=1.2pt, shorten <=-6pt},
      xlabel={$ \frac{1}{p} $},
      ylabel={$ \sigma(\alpha,p) $},
      xlabel style={at={(axis description cs:1,0)}, anchor=west, xshift=0.7em},
      ylabel style={at={(axis description cs:0,1)}, rotate=270, anchor=south, yshift=0.6em},
      domain=0:0.5,
      samples=200,
      legend pos=north east,
      grid=both,
      width=10cm, height=7cm,
      xmin=0, xmax=0.5,
      ymin=0, ymax=0.6,
      tick align=outside,
      tick style={line width=0.6pt}
  ]
    \addplot[blue, thick, domain=0.25:0.5, forget plot] {0.25};
    \addplot[blue, thick, domain=0:0.25] {0.5 - x};
    \addlegendentry{\cite{BGT} $(\alpha=1)$};
    \addplot[only marks, mark=*, mark size=2, blue, forget plot] coordinates {(0.25,0.25)};
    \node[blue] at (axis cs:0.25,0.28) {\scriptsize $p=4$};

    \addplot[green, thick, domain=0.16667:0.5, forget plot] {0.25 - 0.5*x};
    \addplot[green, thick, domain=0:0.16667] {0.5 - 2*x};
    \addlegendentry{\cite{sogge1} $(\alpha=2)$};
    \addplot[only marks, mark=*, mark size=2, green, forget plot] coordinates {(0.16667,0.25-0.5*0.16667)};
    \node[green] at (axis cs:0.16667,0.14) {\scriptsize $p=6$};

    \addplot[orange, dashed, thick, domain=0.125:0.5, forget plot] {0.25};
    \addplot[orange, dashed, thick, domain=0:0.125] {0.5 - 2*x};
    \addlegendentry{\cite{EP} ($\alpha=2$)};
    \addplot[only marks, mark=*, mark size=2, orange, forget plot] coordinates {(0.125,0.25)};
    \node[orange] at (axis cs:0.125,0.28) {\scriptsize $p=8$};

    \addplot[red, dotted, thick, domain=0:0.5, forget plot] {0.5};

    \addplot[
      orange!30,
      fill=orange!30,
      fill opacity=0.3,
      draw=none,
      forget plot
    ]
    plot [domain=0:0.125] (\x,{0.5 - 2*\x})
    -- plot [domain=0.125:0.5] (\x,{0.25})
    -- (0.5, 0.5) -- (0, 0.5)
    -- cycle;

    \addlegendimage{area legend, draw=none, fill=orange!30, fill opacity=0.3}
    \addlegendentry{\cite{EP}};

  \end{axis}
\end{tikzpicture}
\end{minipage}

\vspace{1em}

\begin{minipage}{\textwidth}
\centering
\begin{tikzpicture}
 \begin{axis}[
      axis lines=left,
      clip=false,
      axis line style={->, line width=1.2pt, shorten <=-6pt},
      xlabel={$ \frac{1}{p} $},
      ylabel={$ \delta(\alpha,p) $},
      xlabel style={at={(axis description cs:1,0)}, anchor=west, xshift=0.7em},
      ylabel style={at={(axis description cs:0,1)}, rotate=270, anchor=south, yshift=0.6em},
      domain=0:0.5,
      samples=200,
      legend pos=north east,
      grid=both,
      width=10cm, height=7cm,
      xmin=0, xmax=0.5,
      ymin=0, ymax=0.6,
      tick align=outside,
      tick style={line width=0.6pt}
  ]
    \addplot[blue, thick, domain=0.25:0.5, forget plot] {0.25};
    \addplot[blue, thick, domain=0:0.25] {0.5 - x};
    \addlegendentry{\cite{BGT} $(\alpha=1)$};
    \addplot[only marks, mark=*, mark size=2, blue, forget plot] coordinates {(0.25,0.25)};
    \node[blue] at (axis cs:0.25,0.28) {\scriptsize $p=4$};

    \addplot[green, thick, domain=0.16667:0.5, forget plot] {0.25 - 0.5*x};
    \addplot[green, thick, domain=0:0.16667] {0.5 - 2*x};
    \addlegendentry{\cite{sogge1} $(\alpha=2)$};
    \addplot[only marks, mark=*, mark size=2, green, forget plot] coordinates {(0.16667,0.25-0.5*0.16667)};
    \node[green] at (axis cs:0.16667,0.14) {\scriptsize $p=6$};

    \addplot[red, dotted, thick, domain=0:0.5, forget plot] {0.5};

    \addplot[
      orange!30,
      fill=orange!30,
      fill opacity=0.3,
      draw=none,
      forget plot
    ]
    plot [domain=0:0.16667] (\x,{0.5 - 2*\x})
    -- plot [domain=0.16667:0.5] (\x,{0.25 - 0.5*\x})
    -- (0.5, 0.5) -- (0, 0.5)
    -- cycle;

    \addlegendimage{area legend, draw=none, fill=orange!30, fill opacity=0.3}
    \addlegendentry{Theorem \ref{theomain}};

  \end{axis}
\end{tikzpicture}
\end{minipage}

\caption{Comparison of Theorem \ref{EPbound} (\cite{EP}) and Theorem \ref{theomain}.
Top: graphs of $(1/p,\sigma(\alpha,p))$ from Theorem \ref{EPbound}.
Bottom: graphs of $(1/p,\delta(\alpha,p))$ from Theorem \ref{theomain}.
For each fixed $\alpha$, the theorem yields a piecewise-linear graph, typically two line segments meeting at a critical exponent, and a single segment when no transition occurs.
Aggregating over $\alpha\in(0,2]$ produces distinct envelopes, with the lower panel enlarging the admissible range, especially for $\alpha>1$.}
\label{fig comparison}

\end{figure}

From Theorem \ref{theomain} we obtain sharp exponents, up to the usual $\lambda^\varepsilon$ losses, for all $p \ge 2$ and $\alpha \in (0,2]$. At full dimension $\alpha=2$, Theorem \ref{theomain} identifies the critical Lebesgue exponent $p=2\alpha+2=6$, recovering Sogge’s classical estimate. For intermediate fractal dimensions $\alpha \in (1,2]$ and $2 \le p < 4\alpha$, our bounds strictly improve those of Eswarathasan--Pramanik \cite{EP}. Figure \ref{fig comparison} illustrates this comparison and shows that the sharp range in Theorem \ref{theomain} extends beyond that of Theorem \ref{EPbound} when $\alpha>1$.

As a direct consequence of Theorem \ref{theomain} and Frostman’s lemma, we also obtain $L^p$ bounds for the restriction of eigenfunctions to any Borel subset $E \subseteq M$ with Hausdorff dimension $\dim_{\mathcal{H}}(E)=\alpha \in (0,2]$. Given $\varepsilon>0$, Frostman’s lemma guarantees a $(\alpha-\varepsilon)$-dimensional probability measure $\mu^{(\varepsilon)}$ supported on $E$. Theorem \ref{theomain} then implies the following.

\begin{corollary}\label{coromain}
For any $\varepsilon > 0$, there exists a constant $C_\varepsilon > 0$ such that
\begin{equation}\label{eq mainc}
    \|e_\lambda\|_{L^p(E;\, d\mu^{(\varepsilon)})} \leq C_\varepsilon \lambda^{\delta(\alpha, p) + \varepsilon} \|e_\lambda\|_{L^2(M)},
\end{equation}
where $\delta(\alpha,p)$ is as in \eqref{eq delta}.
\end{corollary}

We remark that the formulation of our fractal $L^p$ restriction estimates differs from Theorem 1.3 of \cite{EP}, where the critical value of $p$ also depends on the choice of $\varepsilon>0$. This discrepancy appears because we do not emphasize the exact size of the $\lambda^\varepsilon$ losses. These losses arise naturally from the multi-scale analysis used in our proof. In contrast, \cite{EP} gives a more precise description of the loss factor. Nonetheless, \eqref{eq mainc} is sharp, modulo the $\lambda^\varepsilon$ losses, for all $p\ge 2$ on the standard sphere for suitable fractal sets $E$. Apart from the $\lambda^\varepsilon$ losses, our results unify the two-dimensional bounds of Sogge \cite{sogge1}, Burq--Gérard--Tzvetkov \cite{BGT}, and Eswarathasan--Pramanik \cite{EP}.

Sogge’s $L^p$ eigenfunction bounds \cite{sogge1} can be viewed as a variable-coefficient analogue of the classical Fourier restriction theorem of Tomas–Stein. Shayya \cite{shayya1} proved a weighted Fourier restriction estimate that sharpens Tomas–Stein. This work partially inspired our proof of Theorem \ref{theomain}, which may be seen as a variable-coefficient counterpart of Shayya’s result.

The proof of Theorem \ref{EPbound} in \cite{EP} uses a $TT^*$ argument. In contrast, our proof of Theorem \ref{theomain} is based on a multi-scale wave-packet analysis. In particular, we use the broad–narrow argument of Bourgain and Guth \cite{BG}. To the best of our knowledge, this is the first application of modern multi-scale wave-packet methods to eigenfunction estimates. This opens a path to applying further techniques, including decoupling and polynomial methods, to spectral problems.

\subsection{Restriction of eigenfunctions to the neighborhood of a curve}
We now study eigenfunction concentration near curves with nonvanishing geodesic curvature.
As a starting point, recall that in \cite{BGT}, Burq--Gérard--Tzvetkov showed that if a curve segment $\gamma$ has nonvanishing geodesic curvature, i.e., if $\gamma$ is parametrized by arc length $s$ and
\begin{equation}
    \langle \nabla_{\gamma'(s)}\gamma'(s), \nabla_{\gamma'(s)}\gamma'(s)\rangle \neq 0,
\end{equation}
then the bound \eqref{eqcor} can be improved. More precisely:

\begin{theorem}[\cite{BGT}]\label{bgt}
Let $\gamma$ be a smooth curve segment in $M$ with nonvanishing geodesic curvature. Then, for $2 \le p \le 4$,
\begin{equation}\label{eq:5}
    \|e_\lambda\|_{L^p(\gamma)} \le C (1+\lambda)^{\frac{1}{3}-\frac{1}{3p}} \|e_\lambda\|_{L^2(M)}.
\end{equation}
Furthermore, \eqref{eq:5} is sharp when $\gamma$ is a small circle on $S^2$.
\end{theorem}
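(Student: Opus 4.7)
The strategy is to reduce, via Riesz--Thorin interpolation with the $p=4$ case of Theorem \ref{cor1} (which already matches the target exponent $\frac{1}{3}-\frac{1}{12}=\frac{1}{4}$), to the endpoint estimate
\[
\|e_\lambda\|_{L^2(\gamma)} \lesssim \lambda^{1/6}\,\|e_\lambda\|_{L^2(M)}.
\]
To attack this endpoint, I would introduce the spectral cutoff $\chi_\lambda=\rho(\sqrt{-\Delta_g}-\lambda)$ with $\rho\in\mathcal S(\mathbb R)$ satisfying $\rho(0)=1$, $\rho\geq 0$, and $\hat\rho$ supported in $(-\delta,\delta)$, with $\delta$ less than half the injectivity radius of $M$. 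Since $\chi_\lambda e_\lambda = e_\lambda$, by $TT^\ast$ the endpoint is equivalent to the operator bound
\[
\bigl\|R\,\chi_\lambda^2\, R^\ast\bigr\|_{L^2(\gamma)\to L^2(\gamma)} \lesssim \lambda^{1/3},
\]
where $R$ denotes restriction to $\gamma$.

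The next step is to use the Hadamard parametrix to express the kernel of $\chi_\lambda$ (and hence of $\chi_\lambda^2$) as a main term
\[
\lambda^{1/2}\,d_g(x,y)^{-1/2}\sum_\pm a_\pm(x,y;\lambda)\,e^{\pm i\lambda d_g(x,y)} + O(\lambda^{-N}),
\]
valid for $\lambda^{-1}\lesssim d_g(x,y)\leq \delta$, with smooth bounded amplitudes $a_\pm$. Restricting to the arclength parametrization of $\gamma\times\gamma$ and setting $\phi(s,t)=d_g(\gamma(s),\gamma(t))$, the non-vanishing geodesic curvature $\kappa_g$ of $\gamma$ produces the crucial expansion
\[
\phi(s,t) = |s-t| - \tfrac{1}{24}\kappa_g(\gamma(s))^2\,|s-t|^3 + O(|s-t|^5),
\]
so that $\partial_s\partial_t\phi$ vanishes only linearly at the diagonal: the phase carries a cubic-type non-degeneracy in $s-t$.

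The heart of the argument is a dyadic decomposition of the restricted kernel into pieces $K_{\lambda,j}$ supported on $\{|s-t|\sim 2^{-j}\}$ (together with a trivial diagonal piece at $|s-t|\lesssim \lambda^{-1}$ handled by the crude pointwise bound $|\chi_\lambda(x,y)|\lesssim\lambda$), and bounding each associated operator $T_j$ in two complementary ways. Schur's test, using the pointwise estimate $|K_{\lambda,j}|\lesssim \lambda^{1/2}\,2^{j/2}$ on a strip of width $2^{-j}$, gives $\|T_j\|_{L^2\to L^2}\lesssim \lambda^{1/2}\,2^{-j/2}$. A Fourier-side estimate, treating $T_j$ as a near-convolution operator in $\sigma=s-t$ with cubic phase $e^{i\lambda(\sigma-\kappa_g^2\sigma^3/24)}$, applies stationary phase with second derivative of size $\lambda\,2^{-j}$ to produce a multiplier of size $2^j$ concentrated in a frequency band of width $\lambda\,4^{-j}$; this yields $\|T_j\|_{L^2\to L^2}\lesssim 2^j$. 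The two bounds balance at the critical scale $2^j\sim \lambda^{1/3}$, and summing the resulting geometric-type series yields the desired bound $\lambda^{1/3}$.

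The main obstacle is making the Fourier-side estimate rigorous despite the kernel not being exactly translation-invariant: both $\kappa_g(\gamma(s))^2$ and the Hadamard amplitudes depend smoothly on $s$ and $t$. I would handle this by further localizing $s$ to subintervals of size comparable to $2^{-j}$, on which the phase is well-approximated by its Taylor expansion at the midpoint, applying Plancherel on each piece, and summing via almost-orthogonality of the spatial cutoffs. Alternatively, one may recast the bound as an estimate for $T_jT_j^\ast$ and apply H\"ormander's $L^2$ theorem for oscillatory integrals to the composed phase, whose mixed Hessian inherits the cubic non-degeneracy. This is precisely the step that uses the non-vanishing geodesic curvature in an essential way: for a geodesic segment, where $\kappa_g\equiv 0$, the phase $\phi(s,t)$ is exactly $|s-t|$, the cubic non-degeneracy fails, and one recovers only the weaker bound of Theorem \ref{cor1}.
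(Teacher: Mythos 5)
Your proposal is essentially correct and captures the right mechanism. The Riesz--Thorin reduction to the $L^2(\gamma)$ endpoint is valid (the target exponent $\tfrac{1}{3}-\tfrac{1}{3p}$ is indeed the linear interpolant of $\tfrac{1}{6}$ at $p=2$ and $\delta_1(4)=\tfrac{1}{4}$ at $p=4$), the $TT^*$ reformulation is standard, the chord-length expansion
\[
d_g(\gamma(s),\gamma(t)) = |s-t| - \tfrac{1}{24}\,\kappa_g^2\,|s-t|^3 + O(|s-t|^5)
\]
is correct (one can check on a latitude circle of the round sphere, where $\kappa_g=\cot\theta_0$, that the ambient and intrinsic curvature contributions combine precisely to give $-\kappa_g^2/24$), and the two competing bounds $\|T_j\|\lesssim\min\{\lambda^{1/2}2^{-j/2},\,2^j\}$ balance at $2^j\sim\lambda^{1/3}$ with geometrically summable tails, recovering $\lambda^{1/3}$ and hence $\lambda^{1/6}$ after the square root. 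The observation that for a geodesic the cubic term vanishes and the Fourier-side bound degenerates to the Schur bound, reproducing only $\lambda^{1/4}$, correctly isolates where the curvature enters. One small technical point: you need the Hadamard parametrix form $\lambda^{1/2}d_g(x,y)^{-1/2}e^{\pm i\lambda d_g(x,y)}$ valid down to $d_g\sim\lambda^{-1}$, which is stronger than the version \eqref{eq refor} recorded in the paper (that one only holds for $d_g\in(\delta_0/2,\delta_0)$ after a partition of unity); the full-range version is in Sogge's book and is what powers the $2^{j/2}$ gain in the Schur estimate.

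Be aware that the paper does not actually prove this theorem; it is cited from Burq--G\'erard--Tzvetkov. However, in the remark following Proposition~\ref{theo2}, the paper sketches an alternative route to the $L^2(\gamma)$ endpoint after the reduction to Carleson--Sj\"olin oscillatory integral operators: the restricted operator is a one-dimensional oscillatory integral with a phase whose Newton distance is $d_{\mathrm N}=3/2$, and Phong--Stein's theorem (equivalently, Pan--Sogge's fold-singularity theorem, to which the paper attributes Hu's proof of the theorem) yields $\lambda^{-1/3}$ decay directly. That route black-boxes the degeneracy of the phase in an off-the-shelf oscillatory-integral theorem, while your dyadic Schur/Plancherel argument constructs the same $\lambda^{1/3}$ bound from scratch and makes the critical scale $|s-t|\sim\lambda^{-1/3}$ transparent. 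Both are sound; your version is longer but self-contained, and the Newton-polygon version is shorter given the machinery but hides the geometry of the balance. Note also that the statement asserts sharpness on non-equatorial latitude circles; your sketch addresses only the upper bound, which is fine since the lower bound is a separate explicit computation with spherical harmonics.
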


Our second result concerns mass distribution in tubular neighborhoods of such $\gamma$. We show that the average $L^2$ density of $e_\lambda$ in tubes around $\gamma$ decreases in a predictable way as the neighborhood size increases.

\begin{theorem}\label{theo:cur}
Let $\gamma:[0,1]\to M$ be a curve segment with nonvanishing geodesic curvature. If $\tfrac{1}{3} \le s \le \tfrac{1}{2}$, then for any $\varepsilon>0$ there exists $C_\varepsilon>0$ such that
\begin{equation}\label{eq:1/3 to 1/2}
     \|e_\lambda\|_{L^2_{\mathrm{avg}}(T_{\lambda^{-1+s}}(\gamma))} \le C_\varepsilon\, \lambda^{\frac{1-s}{4}+\varepsilon}\, \|e_\lambda\|_{L^2(M)},
\end{equation}
where $T_{\lambda^{-1+s}}(\gamma)$ is the $\lambda^{-1+s}$ tubular neighborhood of $\gamma$, and
\begin{equation*}
     \|f\|_{L^2_{\mathrm{avg}}(T_{\lambda^{-1+s}}(\gamma))} := \Big( \lambda^{1-s} \int_{T_{\lambda^{-1+s}}(\gamma)} |f|^2\, dx \Big)^{1/2}.
\end{equation*}
Moreover, modulo $\lambda^\varepsilon$ losses, \eqref{eq:1/3 to 1/2} is sharp when $\gamma$ is a small circle on $S^2$.
\end{theorem}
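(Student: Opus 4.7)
The plan is to prove Theorem~\ref{theo:cur} via a wave packet decomposition at the semiclassical scale $\lambda^{-1/2}\times 1$, combined with a variable-coefficient Mizohata--Takeuchi-type inequality driven by the non-vanishing geodesic curvature of $\gamma$. Writing $R=\lambda^{-1+s}\in[\lambda^{-2/3},\lambda^{-1/2}]$, the claim is equivalent to
\begin{equation*}
\int_{T_R(\gamma)}|e_\lambda|^2\,dx \lesssim \lambda^\varepsilon\, R^{1/2}\, \|e_\lambda\|_2^2.
\end{equation*}

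By a partition of unity and Sogge's short-time reproducing formula, one first reduces to a local oscillatory integral representation of $e_\lambda$. In Fermi coordinates adapted to $\gamma$, the curve has non-vanishing Euclidean curvature and $e_\lambda$ is realized as the output of a variable-coefficient extension operator of Carleson--Sj\"olin type, to which wave packet methods apply. One then performs the standard decomposition $e_\lambda=\sum_T a_T\phi_T+O_{L^2}(\lambda^{-N})$ with $\sum_T|a_T|^2\lesssim 1$, where each $\phi_T$ is an $L^2$-normalized wave packet essentially supported on a $\lambda^{-1/2}\times 1$ tube $T$. Tubes are classified as \emph{tangent} (angle $\lesssim\lambda^{-1/4}$ with some tangent line of $\gamma$) or \emph{transverse}; the non-vanishing curvature yields the geometric estimate $|T\cap T_R(\gamma)|\lesssim \lambda^{-1/4}R$ for tangent tubes (they persist in $T_R(\gamma)$ for length $\sim\lambda^{-1/4}$ before curvature pushes them out) and strictly smaller area for transverse ones.

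Expanding
\begin{equation*}
\int_{T_R(\gamma)}|e_\lambda|^2\,dx=\sum_{T,T'}a_T\bar a_{T'}\int_{T_R(\gamma)}\phi_T\bar\phi_{T'}\,dx,
\end{equation*}
the diagonal terms are bounded by $\sum_T|a_T|^2\|\phi_T\|_\infty^2|T\cap T_R(\gamma)|\lesssim\lambda^{1/2}\cdot\lambda^{-1/4}R=\lambda^{1/4}R$, which is $\leq R^{1/2}$ throughout the range (with equality exactly at $R=\lambda^{-1/2}$, that is, $s=1/2$). The hard part will be the off-diagonal terms: two tangent tubes at distinct points of $\gamma$ have nearby positions and directions differing by only $O(\lambda^{-1/4})$, so naive $L^2$-orthogonality on the thin set $T_R(\gamma)$ fails. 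I plan to control them via a Bourgain--Guth broad/narrow dichotomy: ``broad'' configurations, in which $\gtrsim\lambda^{1/4}$ distinct tangent directions contribute, are handled by bilinear estimates derived from the $L^4$ curved BGT bound (Theorem~\ref{bgt} at $p=4$); ``narrow'' configurations admit a parabolic rescaling matched to the curvature of $\gamma$ which converts the estimate at scale $R$ into the analogous estimate at a smaller scale, enabling recursion on $s$ and producing the $\lambda^\varepsilon$ loss.

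For sharpness on $S^2$ with $\gamma$ a small circle, at the upper endpoint $s=1/2$ the extremizer is a coherent superposition $e_\lambda=N^{-1/2}\sum_{i=1}^N\phi_{T_i}$ of $N\sim\lambda^{1/2}$ unit-$L^2$ wave packets tangent to $\gamma$ at equispaced points (realized as a true eigenfunction after projection onto a spectral cluster of width $O(1)$), which attains $\|e_\lambda\|_{L^2_{avg}(T_{\lambda^{-1/2}}(\gamma))}\sim\lambda^{1/8}$; at the lower endpoint $s=1/3$, sharpness is inherited from the known sharpness of $\|e_\lambda\|_{L^2(\gamma)}\lesssim\lambda^{1/6}$ in Theorem~\ref{bgt} via Fubini over parallel curves; intermediate values of $s$ are saturated by a family of quasimodes interpolating between these two endpoints.
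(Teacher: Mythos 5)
Your proposal and the paper share the same general philosophy (reduce to a Carleson--Sj\"olin oscillatory integral, wave packet decomposition, a geometric tangency observation, and a Tacy-type quasimode for sharpness), but the core of your argument is genuinely different from the paper's and, in the form presented, has a real gap.

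The paper proves the equivalent rescaled statement (Proposition~\ref{theo2}) by an essentially one-scale argument (Proposition~\ref{theo4}): tile $T_{\lambda^{s}}(\gamma^\lambda)$ by balls $B$ of radius $\rho=\lambda^{(k+s)/(k+1)}$ (so $\rho=\lambda^{(1+s)/2}$ for nonvanishing curvature, $k=1$), apply H\"ormander's local $L^2$ bound $\int_{B_\rho}|\mathcal T^\lambda g|^2\lesssim\rho\|g\|_2^2$ on each $B$, and observe that by the order-$k$ tangency (Lemma~\ref{le41}) each wave packet tube $T_{\theta,v}$ meets only $O(\lambda^\varepsilon)$ of the balls $B$. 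Summing with the $L^2$ orthogonality of $\{f_{\theta,v}\}$ closes the argument. The crucial point is the choice of scale $\rho=\lambda^{(1+s)/2}$: it is exactly the length over which a wave packet can hug $T_{\lambda^s}(\gamma^\lambda)$, so H\"ormander's bound at that scale swallows all the cross-terms automatically and there is no need to expand $|e_\lambda|^2$ into wave packet pairs, and no need for bilinear estimates, broad/narrow, or induction on scales.

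Your plan instead fixes the semiclassical wave packet scale $\lambda^{-1/2}\times 1$ and tries to estimate the off-diagonal terms by a Bourgain--Guth dichotomy with a bilinear input from the $L^4(\gamma)$ BGT bound and a parabolic-rescaling recursion. Two concrete problems: (i) Theorem~\ref{bgt} controls a one-dimensional integral along $\gamma$, not the two-dimensional integral over $T_{\lambda^{-1+s}}(\gamma)$; you would need a separate argument (Fubini over parallel curves changes the curvature and the admissible $s$-range) to convert one into the other, and you don't supply it. (ii) Parabolic rescaling in $s$ does not obviously preserve the hypotheses: rescaling changes the curvature of $\gamma$ and the relationship between the tube width $\lambda^{-1+s}$ and the wave packet width $\lambda^{-1/2}$, and the natural self-similar structure is absent here (unlike for Proposition~\ref{theo11}, where the paper \emph{does} use Bourgain--Guth). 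So even granting your diagonal estimate, the off-diagonal argument as sketched would not close. The fix is not to expand in wave packet pairs at all: choose the H\"ormander scale $\lambda^{(1+s)/2}$ and let it do the orthogonality for you, as in the paper. Your sharpness discussion is essentially correct and consistent with the paper's use of Tacy's quasimodes, though ``a family of quasimodes interpolating between the endpoints'' should be made explicit as the $\lambda^{-1+2\beta}\times\lambda^{-1+\beta}$ construction with $\beta=(1+s)/4$.
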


The proof of Theorem \ref{theo:cur} uses a straightforward wave-packet analysis together with a geometric observation about how the tubes around $\gamma$ intersect relevant wave-packets.

In comparison, as a direct consequence of \eqref{eq:5}, we have
\begin{equation}\label{eq:0 to 1/3}
     \|e_\lambda\|_{L^2_{\mathrm{avg}}(T_{\lambda^{-1+s}}(\gamma))} \le C\, \lambda^{\frac{1}{6}} \, \|e_\lambda\|_{L^2(M)},
\end{equation}
for every $0 \le s \le \tfrac{1}{2}$. In particular, for $0 \le s \le \tfrac{1}{3}$, \eqref{eq:0 to 1/3} is sharp for the same reason that \eqref{eq:1/3 to 1/2} is sharp when $s=\tfrac{1}{3}$. Our estimate \eqref{eq:1/3 to 1/2} improves this bound for $\tfrac{1}{3} < s \le \tfrac{1}{2}$. In addition, our result identifies the critical scale for concentration near a curve with nonvanishing geodesic curvature as $\lambda^{-2/3}$, which matches the scale at which a Dirichlet eigenfunction can concentrate near the boundary of a domain \cite{SS}.

Theorem \ref{theo:cur} is closely related to the Mizohata--Takeuchi conjecture. We show that a variable-coefficient analogue of this conjecture yields the same bound as \eqref{eq:1/3 to 1/2}, which explains why our result is natural.

Finally, we remark that, as is standard in the study of eigenfunction Lebesgue norms, our bounds apply not only to the eigenfunctions $e_\lambda$ but also to a smoothed spectral projector on $M$.

\subsection{Organization}
We organize the paper as follows. Section \ref{sec 2} reduces the problem to oscillatory integral operators with Carleson--Sj\"olin phases, introduces a wave-packet decomposition, and reduces the proof of Theorem \ref{theomain} to two key statements, Propositions \ref{theo11} and \ref{theo22}. Section \ref{sec 3} proves Proposition \ref{theo11}, establishing weighted $L^2$ estimates via a variable-coefficient bilinear restriction estimate and the Bourgain--Guth method. Section \ref{sec 4} proves Proposition \ref{theo22} and extends the results to $L^q$ for a range of exponents via interpolation and properties of fractal measures. Section \ref{sec 5} proves Theorem \ref{theo:cur} and explores its connection to the Mizohata--Takeuchi conjecture. Finally, Section \ref{sec 6} discusses sharpness, demonstrating optimality through explicit examples and known behavior of spherical harmonics and eigenfunctions.

\subsection{Notation}

	For $r>0$, $B(x, r)$ denotes a ball centered at $x$ with radius $r$. In local coordinates we do not distinguish between geodesic and Euclidean balls at the scales we consider. We abbreviate $B_r$ when the center is not of particular interest.  We use $w_{B_r}$ to denote a Schwartz weight function that is adapted to $B_r$ so that $w_{B_r}\geq 1$ on $B_r$ and decays rapidly away from it.

	For non-negative quantities $A$ and $B$, we shall write $A\lesssim B$, if there is an absolute constant $C>0$ such that $A\le CB$. 
	We shall write $A\approx B$, if $\frac1C B\le A\le CB$ for some absolute constant $C>1$.

	We shall use ${\rm RapDec}(R)$ to denote a term that is rapidly decaying in $R>1$, that is, for any $N\in\N$, there exists a constant $C_N$ such that $|{\rm RapDec} (R)|\le C_N R^{-N}$.

\subsection{Acknowledgements.}This project was supported by the National Key R\&D Program of China: No. 2022YFA1007200,  2022YFA1005700, and 2024YFA1015400. C. Gao was supported by Natural Science Foundation of China grant: No. 12301121.  C. Miao was partially supported by Natural Science Foundation of China grant: No. 12371095. Y. Xi was partially supported by Natural Science Foundation of China under Grant No. 12571107 and 12171424 and the Zhejiang Provincial Natural Science Foundation of China under Grant No. LR25A010001. The first author would like to express gratitude to Jianhui Li and Shaozhen Xu for their insightful discussions. The authors thank the anonymous referees for their careful reading and insightful comments, which have significantly improved the presentation of the paper.

\section{Preliminaries}\label{sec 2}

\subsection{Reduction to oscillatory integral operators with Carleson--Sj\"olin phases}

We begin by applying a standard reduction due to Sogge \cite{soggeFIO}, specifically utilizing the variant presented in \cite{BGT}.

\begin{lemma}
Let $\delta_0 > 0$ be less than half of the injectivity radius of the Riemannian manifold $(M, g)$. Then there exists a function $\chi \in \mathcal{S}(\mathbb{R})$ with $\chi(0) = 1$ such that for any $f \in C^\infty(M)$ and $\lambda > 0$, the operator $\chi_\lambda$ defined by
{\[
\chi_\lambda f(x) := \chi\big(\sqrt{-\Delta_g }-\lambda\big) f(x)
\]}
can be expressed as
\begin{equation}\label{eq refor}
    \chi_\lambda f(x) = \lambda^{\frac{1}{2}} \int_M e^{-i \lambda d_g(x, y)} \alpha(x, y, \lambda) f(y) \, dy + R_\lambda f(x).
\end{equation}
Here, {$d_g(x, y)$ }denotes the Riemannian distance between $x$ and $y$, and the error operator $R_\lambda$ satisfies
\[
\|R_\lambda f\|_{L^\infty(M)} \leq C_N \lambda^{-N} \|f\|_{L^2(M)} \quad \text{for all } N \in \mathbb{N}.
\]
Furthermore, the amplitude $\alpha(x, y, \lambda)$ is a smooth function that vanishes unless $d_g(x, y) \in (\delta_0/2, \delta_0)$ and satisfies
\[
\bigl|\partial_x^{\mathbf a}\partial_y^{\mathbf b}\,\alpha(x,y,\lambda)\bigr|
\le C_{\mathbf a,\mathbf b}
\quad \text{for all } \mathbf a,\mathbf b\in \mathbb{N}^2.
\]
\end{lemma}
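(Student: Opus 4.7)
The plan is to realize $\chi_\lambda$ as a superposition of half-wave propagators via Fourier inversion and then exploit the short-time parametrix for $e^{it\sqrt{-\Delta_g}}$. Concretely, I would choose $\chi\in\mathcal S(\mathbb R)$ whose Fourier transform $\hat\chi$ is compactly supported in $(\delta_0/2,\delta_0)$, normalized by $\chi(0)=\int\hat\chi(t)\,dt=1$. Such a $\chi$ is produced by fixing any nonnegative bump $\rho\in C_c^\infty((\delta_0/2,\delta_0))$ with $\int\rho=1$ and setting $\hat\chi=\rho$. Then Fourier inversion gives
\[
\chi_\lambda f(x)=\int \hat\chi(t)\,e^{-it\lambda}\,e^{it\sqrt{-\Delta_g}}f(x)\,dt,
\]
and since $\operatorname{supp}\hat\chi\subset(\delta_0/2,\delta_0)$ lies strictly inside the injectivity radius, only short-time propagation contributes.

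For $t$ in this range, Hörmander's construction of the half-wave parametrix (the Hadamard parametrix in a normal coordinate chart) represents
\[
e^{it\sqrt{-\Delta_g}}f(x)=(2\pi)^{-2}\int\!\!\!\int e^{i(\varphi(x,y,\xi)+t|\xi|_{g(y)})}\,b(t,x,y,\xi)\,f(y)\,d\xi\,dy+\text{smoothing},
\]
where $\varphi(x,y,\xi)$ is the phase function (homogeneous of degree $1$ in $\xi$) that solves the eikonal equation and whose critical point in $\xi/|\xi|$ yields the Riemannian distance: $\inf_\omega(-\varphi(x,y,\omega)+ d_g(x,y)|\omega|)$ vanishes at the critical direction. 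Substituting the parametrix and inserting a dyadic cutoff $\beta(|\xi|/\lambda)$ allows the low-frequency remainder to be absorbed into $R_\lambda$ (since then $|\xi|$ is far from $\lambda$, giving rapid decay in $\lambda$ after integration in $t$).

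Next I would perform stationary phase in $(t,|\xi|)$ on the localized piece. Writing $\xi=r\omega$, the phase
\[
t(r-\lambda)+\varphi(x,y,r\omega)
\]
has a critical point at $r=\lambda$, $t=-\partial_r\varphi\big|_{r=\lambda}$, which is nondegenerate and lies in the support of $\hat\chi$ precisely when $d_g(x,y)\in(\delta_0/2,\delta_0)$. This $2$-dimensional stationary phase contributes a factor $\lambda^{-1}$, leaving a remaining one-dimensional angular integral of size $\lambda$ (from $r^{n-1}dr=\lambda\,dr$ with $n=2$), so that the overall prefactor becomes $\lambda^{1/2}$ after a final angular stationary phase. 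The resulting exponential is $e^{-i\lambda d_g(x,y)}$, yielding the representation in \eqref{eq refor} with $\psi(x,y)=-d_g(x,y)$ and a symbol $\alpha(x,y,\lambda)$ of order zero that is supported in $d_g(x,y)\in(\delta_0/2,\delta_0)$ and satisfies uniform derivative bounds because $\hat\chi$ and $b$ do.

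The residual contributions (from the low-frequency cutoff and from boundary terms in the stationary phase) can be handled by repeated integration by parts in $t$, using that $|r-\lambda|\gtrsim\lambda$ off the critical point; each integration produces a factor of $\lambda^{-1}$, so these contributions can be absorbed into $R_\lambda$ with the claimed rapid decay in $\lambda$. The step I expect to be most delicate is verifying that the angular stationary phase globally produces exactly the distance phase $-\lambda d_g(x,y)$ — i.e., identifying the parametrization of the geodesic flow canonical relation with the phase $\varphi$ and confirming the sign conventions — which requires invoking the fact that $d_g^2(x,y)$ is smooth on $\{d_g<\mathrm{inj}\,M\}$ together with Hamilton--Jacobi theory for the eikonal equation. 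The remaining bookkeeping on amplitudes (smoothness, support, boundedness of derivatives) then follows from the standard symbol calculus applied to $b(t,x,y,\xi)\hat\chi(t)\beta(|\xi|/\lambda)$.
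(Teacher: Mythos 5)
The paper does not prove this lemma itself — it is stated as a ``standard reduction due to Sogge, specifically utilizing the variant presented in [BGT]'' and presented with a citation only. Your proposal is precisely the argument one finds in Chapter~5 of Sogge's book and in Burq--G\'erard--Tzvetkov: pick $\chi$ with $\hat\chi$ compactly supported in $(\delta_0/2,\delta_0)$, write $\chi_\lambda$ as $\int\hat\chi(t)e^{-it\lambda}e^{it\sqrt{-\Delta_g}}\,dt$, insert the short-time Hadamard/H\"ormander parametrix, and extract the $\lambda^{1/2}$ prefactor together with the phase $-d_g(x,y)$ by stationary phase. The sign conventions, the mechanism by which $\supp\hat\chi$ localizes the amplitude to $d_g(x,y)\in(\delta_0/2,\delta_0)$, and the treatment of the low-frequency/off-critical remainder by integration by parts are all handled as they should be. So this is essentially the same route the paper implicitly relies on.

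One point of caution: your power-count in the stationary phase step is internally inconsistent. You claim the $(t,|\xi|)$ stationary phase contributes $\lambda^{-1}$ and the Jacobian $r^{n-1}\,dr$ contributes $\lambda$; combined with the angular $\lambda^{-1/2}$ this would give $\lambda^{-1/2}$, not $\lambda^{1/2}$. The issue is a double count: if you genuinely rescale $r=\lambda\rho$ to produce the $\lambda^{-1}$ from a two-dimensional stationary phase in $(t,\rho)$, then the Jacobian $r\,dr\,d\omega=\lambda^2\rho\,d\rho\,d\omega$ contributes $\lambda^2$, not $\lambda$; alternatively, if you leave $r$ unscaled, the $(t,r)$-Hessian $\bigl(\begin{smallmatrix}0&1\\1&0\end{smallmatrix}\bigr)$ carries no $\lambda$-dependence and that integral contributes only $O(1)$, while $r\,dr$ contributes $\lambda$. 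The cleanest account actually uses the compact support of $\hat\chi$ to perform the $t$-integral exactly (it is a Fourier transform, not a stationary phase), yielding a factor $\chi(|\xi|-\lambda)$ localizing $r\approx\lambda$; then the $r$-integral with its Jacobian gives $\lambda$, the angular stationary phase gives $\lambda^{-1/2}$, and the net prefactor is $\lambda^{1/2}$. Your final answer is correct; I am only flagging that the intermediate bookkeeping as written does not add up and should be cleaned before inclusion.
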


By employing a smooth partition of unity, we may assume that the support of $\alpha(x, y, \lambda)$ is contained within $B(x_0, \delta_0/10) \times B(y_0, \delta_0/10)$ for some points $x_0, y_0 \in M$ satisfying $\delta_0/2 < d_g(x_0, y_0) < \delta_0$. We work in geodesic normal coordinates centered at $x_0=(0,0)$, so that the coordinates of $y_0$ are $(0, t_0)$ for some $t_0 \in (\delta_0/2, \delta_0)$.

Next, we express $y$ in polar coordinates around $(0, 0)$ as $y = (t \cos \xi, t \sin \xi)$, where $t = |y|$. It is well-known (see, e.g., Chapter 5 in \cite{soggeFIO}) that for any fixed $t \in (\delta_0/2, \delta_0)$, the phase function
\[
\phi(x, \xi) := -d_g\big(x, (t \cos \xi, t \sin \xi)\big)
\]
satisfies the Carleson--Sj\"olin conditions:
\begin{equation}\label{eq:CS-condition}
    \operatorname{rank} \begin{pmatrix}
        \partial_\xi \partial_{x} \phi(x, \xi) \\
        \partial_\xi^2 \partial_{x} \phi(x, \xi)
    \end{pmatrix} = 2.
\end{equation}

From this point forward, we focus on an oscillatory integral operator associated with a Carleson--Sj\"olin phase function $ \phi(x, \xi) $. Indeed, all of our main results can be rephrased to provide corresponding results for any oscillatory integral operator with a Carleson--Sj\"olin phase.
Given $\lambda > 1$, {we define the rescaled phase function $\phi^\lambda(x, \xi) := \lambda \phi(x/\lambda, \xi),$ and consider the oscillatory integral operator}
\[
\mathcal T^\lambda f(x) := \int e^{i \phi^\lambda(x, \xi)} a^\lambda(x, \xi) f(\xi) \, d\xi.
\]
Here $a^\lambda(x,\xi)$ is a smooth function whose $x$-support is contained in $B(0,\lambda)$.
In our application, it is given by
\[
    a^\lambda(x,\xi) := \alpha\big(x/\lambda,\,(t\cos\xi, t\sin\xi),\,\lambda\big).
\]
Any $L^2 \to L^p$ bound for $\mathcal T^\lambda$ thus implies a corresponding $L^2 \to L^p$ bound for the operator $\chi_\lambda$, after integrating in $t$.

Let $H: \mathbb{R}^2 \to [0, 1]$ be a measurable function. Define $A_\alpha(H)$ by
\begin{equation}\label{fractal}
    A_\alpha(H) := \inf \Big\{ C \ge 0 : \int_{B(x_0,r)} H(x)\, dx \le C r^\alpha \ \text{for all } x_0 \in \mathbb{R}^2 \text{ and } r \ge 1 \Big\}.
\end{equation}
We say that $H$ is an \emph{$\alpha$-dimensional weight} if $A_\alpha(H) < \infty$. The associated weighted $L^p$ norm is defined by
\[
\|f\|_{L^p(B(x_0,r);\, H \, dx)}: = \left( \int_{B(x_0,r)} |f(x)|^p H(x) \, dx \right)^{1/p}.
\]

Given an $\alpha$-dimensional weight $H$, we will show in Section \ref{subsec 2.4 implies 1.3} that the proof of Theorem \ref{theomain} reduces to establishing estimates of the following form:
\[
\|\mathcal T^\lambda f\|_{L^p(B(0, \lambda);\,H  dx)} \leq  C_\varepsilon \lambda^{\beta(\alpha,p)+\varepsilon} \|f\|_{L^2}, \quad 2 \leq p \leq \infty,
\]
for some constant $C_\varepsilon > 0$ independent of $\lambda$ and suitable exponents $\beta(\alpha,p) \geq 0$. 

\begin{proposition}\label{theo11}
{Let $H$ be an $\alpha$-dimensional weight}. For every $\varepsilon > 0$ and $1 \leq R \leq \lambda$, there exists a constant ${ C_\varepsilon}> 0$ such that
\[
\|\mathcal T^\lambda f\|_{L^2(B(0, R);\,H  dx)} \leq {C_\varepsilon} R^{\beta(\alpha)+\varepsilon} \|f\|_{L^2},
\]
where
\[
\beta(\alpha) = \begin{cases}
    0, & 0 < \alpha < \frac{1}{2}, \\
    \frac{\alpha}{2} - \frac{1}{4}, & \frac{1}{2} \leq \alpha < 1, \\
    \frac{\alpha}{4}, & 1 \leq \alpha \leq 2.
\end{cases}
\]
\end{proposition}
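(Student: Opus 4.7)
The plan is to prove Proposition~\ref{theo11} by a Bourgain--Guth broad-narrow induction combined with a variable-coefficient bilinear Carleson--Sj\"olin restriction estimate, in the spirit of Shayya's weighted Fourier restriction argument adapted to the variable coefficient setting. As a warm-up, I will observe that for $1 \leq \alpha \leq 2$ the exponent $\beta(\alpha) = \alpha/4$ is already obtainable via a one-line application of Cauchy--Schwarz and Stein's $L^4$ estimate,
\[
\int_{B_R} |\mathcal{T}^\lambda f|^2 H \, dx \leq \|\mathcal{T}^\lambda f\|_{L^4(B_R)}^2 \|H\|_{L^2(B_R)} \lesssim A_\alpha(H)^{1/2} R^{\alpha/2} \|f\|_2^2,
\]
using $H \leq 1$ and the $\alpha$-dimensional condition $\int_{B_R} H \leq A_\alpha(H) R^\alpha$. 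In fact the full three-regime $\beta(\alpha)$ can be recovered by running H\"older with exponent $p = \max(2,\min(4,4\alpha))$, interpolating Stein's $L^4$ with H\"ormander's $L^2$. Nevertheless, the broad-narrow approach I describe below is more flexible and produces wave packet structural information that will be invoked in Proposition~\ref{theo22}.

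The induction will be on the scale $R$. I will define $\mathcal{Q}(R)$ as the smallest constant for which $\|\mathcal{T}^\lambda f\|_{L^2(B_R;H\,dx)} \leq \mathcal{Q}(R) A_\alpha(H)^{1/2}\|f\|_2$ holds uniformly in $f$, in $\alpha$-dimensional $H$, and in Carleson--Sj\"olin phases drawn from a class stable under parabolic rescaling. With $K = R^\delta$ for a small $\delta > 0$, decompose $f = \sum_\tau f_\tau$ along $K^{-1}$-caps in $\xi$ and partition $B(0,R)$ into $K$-balls $\{B_K\}$. On each $B_K$ the Bourgain--Guth dichotomy produces either a \emph{narrow} alternative, where $|\mathcal{T}^\lambda f|$ is $K^{O(1)}$-dominated on $B_K$ by the contribution of $O(1)$ caps, or a \emph{broad} alternative, where there exist two $K^{-1}$-caps $\tau_1, \tau_2$ with angular separation $\gtrsim K^{-1}$ satisfying $|\mathcal{T}^\lambda f(x)|^2 \leq K^{O(1)} |\mathcal{T}^\lambda f_{\tau_1}(x)||\mathcal{T}^\lambda f_{\tau_2}(x)|$ for $x \in B_K$.

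For the broad case I will invoke Lee's variable-coefficient bilinear Carleson--Sj\"olin estimate $\|\mathcal{T}^\lambda f_{\tau_1}\mathcal{T}^\lambda f_{\tau_2}\|_{L^2(B_R)} \lesssim R^\varepsilon \|f_{\tau_1}\|_2\|f_{\tau_2}\|_2$ for transverse $\tau_i$, combine it with Cauchy--Schwarz in the weight and $\|H\|_{L^2(B_R)} \leq A_\alpha(H)^{1/2} R^{\alpha/2}$, and sum over transverse cap pairs, yielding a broad contribution $\lesssim K^{O(1)} R^\varepsilon A_\alpha(H)^{1/2} R^{\alpha/4}\|f\|_2$ at the norm level. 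For the narrow case I will restrict to a single $K^{-1}$-cap and apply parabolic rescaling, which converts the $B_R$ problem to a $B_{R/K^2}$ problem for a rescaled Carleson--Sj\"olin operator in the same class; invoking the induction hypothesis and tracking how $A_\alpha(H)$ transforms under the rescaling yields a narrow contribution $\leq K^{c(\alpha)} \mathcal{Q}(R/K^2) A_\alpha(H)^{1/2}\|f\|_2$, with $c(\alpha)$ capturing both the cap-summation loss and the weight rescaling.

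Combining the two alternatives produces the recursion $\mathcal{Q}(R) \leq C_\varepsilon R^\varepsilon K^{O(1)} R^{\alpha/4} + K^{c(\alpha)} \mathcal{Q}(R/K^2)$, which iterated $O(1/\delta)$ times with $\delta$ small yields $\mathcal{Q}(R) \lesssim R^{\beta(\alpha)+\varepsilon}$. The three regimes of $\beta(\alpha)$ will emerge from the competition between the broad $R^{\alpha/4}$ term and the narrow iteration: the broad term dominates for $\alpha \geq 1$ (giving $\beta = \alpha/4$), while for $\alpha < 1$ the narrow iteration successively improves the exponent down to $\alpha/2 - 1/4$ on $[1/2,1]$ and to $0$ on $(0,1/2)$, with the transitions occurring exactly at the kinks in the piecewise $\beta(\alpha)$. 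The hard part will be setting up the variable-coefficient bilinear Carleson--Sj\"olin estimate with sufficiently mild $K$-loss (Lee's theorem in the appropriate localized form), and verifying that parabolic rescaling preserves the Carleson--Sj\"olin class while transforming $A_\alpha(H)$ in the manner needed for the narrow recursion to close with the advertised exponent arithmetic.
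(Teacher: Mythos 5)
Your broad-brush sketch shares some DNA with the paper's argument for the regime $1 \leq \alpha \leq 2$ (Bourgain--Guth broad/narrow, variable-coefficient bilinear Carleson--Sj\"olin estimate, parabolic rescaling), but both the warm-up and the central mechanism you describe contain genuine errors.

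The warm-up Cauchy--Schwarz computation is wrong. You assert $\|\mathcal{T}^\lambda f\|_{L^4(B_R)} \lesssim \|f\|_{L^2}$, but the sharp local $L^4$ estimate for a Carleson--Sj\"olin operator (and for the Fourier extension operator in the plane) is $\|\mathcal{T}^\lambda f\|_{L^4(B_R)} \lesssim R^{1/8+\varepsilon}\|f\|_{L^2}$; a single Knapp cap of angular width $R^{-1/2}$ saturates this. With the correct $L^4$ bound, the Cauchy--Schwarz argument
\[
\int_{B_R}|\mathcal{T}^\lambda f|^2 H\,dx \leq \|\mathcal{T}^\lambda f\|_{L^4(B_R)}^2\,\|H\|_{L^2(B_R)} \lesssim R^{1/4+\varepsilon}\|f\|_2^2 \cdot A_\alpha(H)^{1/2}R^{\alpha/2}
\]
yields the norm-level exponent $\alpha/4 + 1/8$, not $\alpha/4$. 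The same issue sinks your H\"older-with-intermediate-$p$ claim: e.g.\ at $\alpha=2$, $p=4$ one gets $R^{5/8}$ rather than the target $R^{1/2}$, and at $\alpha=1/2$, $p=2$ one gets $R^{1/2}$ rather than $R^0$. The $1/8$ you are missing is precisely what the Du--Zhang-type weighted induction (the paper's Theorem~\ref{theo61}) is engineered to remove; it is not recoverable by a soft H\"older interpolation between Stein's $L^4$ and H\"ormander's $L^2$.

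Second, and more structurally, the recursion you set up cannot produce any exponent below $\alpha/4$, so it cannot give the regimes $\beta(\alpha) = \alpha/2 - 1/4$ for $1/2\leq\alpha<1$ and $\beta(\alpha)=0$ for $\alpha<1/2$. Your broad alternative estimates the broad contribution by $K^{O(1)}R^{\varepsilon}A_\alpha(H)^{1/4}R^{\alpha/4}\|f\|_2$; this term is a floor in the iteration $\mathcal{Q}(R) \leq C_\varepsilon R^\varepsilon K^{O(1)}R^{\alpha/4} + K^{c(\alpha)}\mathcal{Q}(R/K^2)$, so any solution satisfies $\mathcal{Q}(R) \gtrsim R^{\alpha/4}$. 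A narrow iteration cannot improve below the broad input; ``the narrow iteration successively improves the exponent down to $0$'' is not how this recursion behaves. The paper in fact only obtains $R^{\alpha/4}$ from its Bourgain--Guth argument (stated for all $\alpha\in(0,2]$ as Theorem~\ref{theo61}), and then handles the sub-$\alpha/4$ regimes by entirely different means: for $0 < \alpha < 1/2$ a direct $TT^*$/distributional argument using the kernel decay $|K(x,y)| \lesssim (1+|x-y|)^{-1/2}$ together with the summability $\sum_k 2^{k(\alpha-1/2)} < \infty$, and for $1/2 \leq \alpha < 1$ by H\"older from the $L^q$ estimate of Proposition~\ref{theo22}, which itself rests on Shayya's $M_\alpha \leq M_\beta$ monotonicity for fractal maximal functions. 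If you want a single unified scheme, you must at minimum run the broad estimate so that it tracks the $\mu_Q$-weight structure (the paper's quantity $\mathcal{W}$) rather than just $\|H\|_{L^2}$, and you must still import an independent argument for $\alpha < 1/2$; as written, your recursion does not close for $\alpha < 1$.
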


\begin{proposition}\label{theo22}
{Let $H$ be an $\alpha$-dimensional weight.} For every $\varepsilon > 0$ and $1 \leq R \leq \lambda$, there exists a constant ${ C_\varepsilon}> 0$ such that
\[
\|\mathcal T^\lambda f\|_{L^q(B(0, R);\,H  dx)} \leq {C_\varepsilon} R^\varepsilon \|f\|_{L^2},
\]
provided that
\[
q \geq \begin{cases}
    2, & 0 < \alpha < \frac{1}{2}, \\
    4\alpha, & \frac{1}{2} \leq \alpha < 1, \\
    2\alpha + 2, & 1 \leq \alpha \leq 2.
\end{cases}
\]
\end{proposition}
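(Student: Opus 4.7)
My plan is to derive Proposition~\ref{theo22} from Proposition~\ref{theo11} via interpolation, combined with the classical $L^2\to L^6$ Carleson--Sj\"olin bound of Stein and a dyadic pigeonhole exploiting the $\alpha$-dimensional structure of the weight $H$.

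First, for $0<\alpha<1/2$ Proposition~\ref{theo11} already supplies the weighted $L^2$ bound with $\beta(\alpha)=0$, and for any $q>2$ the conclusion follows from H\"older's inequality with the trivial pointwise bound $\|\mathcal T^\lambda f\|_{L^\infty}\lesssim\|f\|_{L^2}$ (obtained by Cauchy--Schwarz on the defining $\xi$-integral, since the amplitude $a^\lambda$ has bounded $\xi$-support).

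For $1/2\le\alpha\le 2$ I use two endpoint estimates. The low endpoint is Proposition~\ref{theo11} itself, giving $\|\mathcal T^\lambda f\|_{L^2(H\,dx)}\lesssim_\varepsilon R^{\beta(\alpha)+\varepsilon}\|f\|_{L^2}$. The high endpoint is Stein's classical $L^2\to L^6$ estimate for oscillatory integral operators with a Carleson--Sj\"olin phase (applicable here thanks to the rank condition \eqref{eq:CS-condition}); since $H\le 1$ this yields $\|\mathcal T^\lambda f\|_{L^6(H\,dx)}\le\|\mathcal T^\lambda f\|_{L^6(dx)}\lesssim\|f\|_{L^2}$.

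The key step is to combine these via a dyadic density-level decomposition of $H$. Writing $H=\sum_{k\ge 0}H_k$ with $H_k=H\mathbf{1}_{\{2^{-k-1}<H\le 2^{-k}\}}\sim 2^{-k}\mathbf{1}_{E_k}$, the $\alpha$-dimensional hypothesis yields $|E_k|\lesssim 2^k A_\alpha(H)R^\alpha$. On each $E_k$ I interpolate the $L^2$ bound (applied to the weight $\mathbf{1}_{E_k}$, which is $\alpha$-dimensional with constant $\lesssim 2^k A_\alpha(H)$) against Stein's $L^6$ bound restricted to $E_k$, via log-convexity of $L^p$-norms, and then sum in $k$ by a geometric series. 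This produces the $L^q$ bound at the critical exponent $q=4\alpha$ (when $1/2\le\alpha\le 1$) or $q=2\alpha+2$ (when $1\le\alpha\le 2$); for $q$ above the critical threshold, a final H\"older interpolation with the trivial $L^\infty$ bound handles the remaining range.

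The main obstacle is that the naive log-convexity interpolation between the two endpoints yields only the suboptimal growth $R^{\alpha(2-\alpha)/(8(\alpha+1))}$ at $q=2\alpha+2$, which is strictly positive for $\alpha\in(1,2)$. To reach the sharp $R^\varepsilon$ bound one must genuinely exploit the $\alpha$-dimensional structure of $H$ beyond its total mass, through the density-level decomposition above, carefully balancing the $2^k$ loss in $A_\alpha(\mathbf{1}_{E_k})$ against the smallness of $|E_k|$. The specific form $\beta(\alpha)=\alpha/4$ of Proposition~\ref{theo11} is calibrated precisely so that this balance closes the geometric sum at exactly the critical exponent stated in Proposition~\ref{theo22}.
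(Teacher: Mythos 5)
The key ideas in your proposal — a dyadic density-level decomposition of $H$ together with interpolation between the weighted $L^2$ bound of Proposition~\ref{theo11} and Stein's $L^6$ Carleson--Sj\"olin estimate — do not close the gap you yourself identify, and in fact the decomposition reproduces the naive interpolation exponent exactly. Writing $H_k\sim 2^{-k}\mathbf 1_{E_k}$ and using $A_\alpha(\mathbf 1_{E_k})\lesssim 2^kA_\alpha(H)$ together with $|E_k|\lesssim 2^kA_\alpha(H)R^\alpha$, the sharpest estimate on each level set (interpolating $L^2(E_k)$ against $L^6(E_k)$, $\theta=(6-q)/(2q)$) is
\[
\int_{E_k}|\mathcal T^\lambda f|^q\,dx\lesssim \bigl(R^{\alpha/2+\varepsilon}\,2^kA_\alpha(H)\bigr)^{(6-q)/4}\|f\|_{L^2}^q,
\]
so that
\[
\sum_k 2^{-k}\int_{E_k}|\mathcal T^\lambda f|^q\,dx \lesssim R^{\alpha(6-q)/8+\varepsilon}A_\alpha(H)^{(6-q)/4}\|f\|_{L^2}^q\sum_k 2^{k(2-q)/4},
\]
whose $k$-sum converges for $q>2$ but whose $R$-power at $q=2\alpha+2$ is precisely the $\alpha(2-\alpha)/(8(\alpha+1))$ you call suboptimal (after taking $q$-th roots). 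The two pieces of information $A_\alpha(\mathbf 1_{E_k})\lesssim 2^kA_\alpha(H)$ and $|E_k|\lesssim 2^kA_\alpha(H)R^\alpha$ are not independent — the second is the first applied at $r=R$ — so there is no ``balance'' to exploit beyond naive log-convexity. The same computation fails to reach $q=4\alpha$ for $1/2<\alpha<1$, where interpolation produces the positive exponent $(3-2\alpha)(2\alpha-1)/(16\alpha)$.

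The paper's argument is structurally different and this difference is essential. It rests on two ingredients absent from your plan. First, Shayya's H\"older-type inequality (Theorem~\ref{theo33}), $M_\alpha F\le M_\beta F$ for $\beta<\alpha$, is used to prove the monotonicity $Q(\alpha,p)/\alpha\le Q(\beta,p)/\beta$, which yields the $1/2\le\alpha<1$ case directly by pushing $\beta\downarrow 1/2$ (where $\beta(\alpha)=0$ in Proposition~\ref{theo11}). Second, for $1\le\alpha\le 2$ the proof is a genuine \emph{bootstrap}: Theorem~\ref{theo61}, applied on balls of every radius $r\ge 1$, shows that $\mathcal H:=\|f\|_{L^2}^{-2}|\mathcal T^\lambda f|^2 H$ is itself a weight of dimension $\bar\alpha=\alpha/2+\varepsilon\in(1/2,1)$ with $A_{\bar\alpha}(\mathcal H)\lesssim A_\alpha(H)$. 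Feeding this new weight into the already-established $1/2<\bar\alpha<1$ case (with arbitrary test function $g$, then taking $g=f$) converts a $q'>4\bar\alpha$ bound into a $q=q'+2>2\alpha+2$ bound. This self-referential step — viewing $|\mathcal T^\lambda f|^2 H$ as a new fractal weight of halved dimension — is precisely the mechanism that beats interpolation, and it is not recoverable from any fixed pair of endpoint estimates by decomposing the weight alone. To repair your proof you would need to import both Shayya's inequality and the dimension-halving bootstrap; the level-set decomposition on its own does not suffice.
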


By interpolating between Propositions \ref{theo11} and \ref{theo22}, we obtain

\begin{proposition}\label{cormain}
{Let $H$ be an $\alpha$-dimensional weight.} For every $\varepsilon > 0$ and $1 \leq R \leq \lambda$, there exists a constant ${C_\varepsilon}> 0$ such that
\begin{equation}\label{eq mainm}
    \|\mathcal T^\lambda f\|_{L^p(B(0, R);\,H  dx)} \leq {C_\varepsilon} R^{\beta(\alpha,p)+\varepsilon} \|f\|_{L^2},
\end{equation}
where
\[
\beta(\alpha, p) = \begin{cases}
    0, & 0 < \alpha \leq \frac{1}{2}, \quad 2 \leq p \leq \infty, \\
    \frac\alpha p-\frac{1}{4}, & \frac{1}{2} < \alpha \leq 1, \quad 2 \leq p \leq 4\alpha, \\
    0, & \frac{1}{2} < \alpha \leq 1, \quad 4\alpha < p \leq \infty, \\
    \frac{\alpha + 1}{2p}-\frac14, & 1 < \alpha \leq 2, \quad 2 \leq p \leq 2\alpha + 2, \\
    0, & 1 < \alpha \leq 2, \quad 2\alpha + 2 < p \leq \infty.
\end{cases}
\]
\end{proposition}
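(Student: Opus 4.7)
The plan is to deduce Proposition~\ref{cormain} from Propositions~\ref{theo11} and~\ref{theo22} by Riesz--Thorin interpolation, viewing $\mathcal T^\lambda$ as a linear operator from $L^2(d\xi)$ into the weighted target space $L^p(B(0,R);\,H\,dx)$. Since the measure $H\,dx$ is held fixed on the target side and only the exponent $p$ varies, the classical complex interpolation theorem applies verbatim, and the $R^\varepsilon$ losses interpolate to $R^\varepsilon$ because $(1-\theta)\varepsilon + \theta\varepsilon = \varepsilon$.

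I split into three cases according to the range of $\alpha$. When $0 < \alpha \leq \tfrac12$, Proposition~\ref{theo22} directly supplies $\|\mathcal T^\lambda f\|_{L^q(B(0,R);\,H\,dx)} \lessapprox_\varepsilon \|f\|_2$ for every $q \geq 2$; choosing $q=p$ gives $\beta(\alpha,p)=0$ with no interpolation needed. Similarly, in the regions $4\alpha < p \leq \infty$ (when $\tfrac12 < \alpha \leq 1$) and $2\alpha+2 < p \leq \infty$ (when $1 < \alpha \leq 2$), Proposition~\ref{theo22} applied with $q=p$ is already the claim.

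For the remaining interior ranges, I interpolate between the two endpoint estimates. In the range $\tfrac12 < \alpha \leq 1$ and $2 \leq p \leq 4\alpha$, the endpoints are $p_0=2$ with $M_0 \lessapprox_\varepsilon R^{\alpha/2 - 1/4}$ from Proposition~\ref{theo11}, and $p_1 = 4\alpha$ with $M_1 \lessapprox_\varepsilon 1$ from Proposition~\ref{theo22}. With $\tfrac1p = \tfrac{1-\theta}{2} + \tfrac{\theta}{4\alpha}$ one computes
\begin{equation*}
1-\theta = \frac{4\alpha/p - 1}{2\alpha - 1}, \qquad (1-\theta)\!\left(\tfrac{\alpha}{2} - \tfrac14\right) = \frac{4\alpha/p-1}{4} = \frac{\alpha}{p} - \frac14,
\end{equation*}
which matches the stated exponent. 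In the range $1 < \alpha \leq 2$ and $2 \leq p \leq 2\alpha+2$, the endpoints are $p_0=2$ with $M_0 \lessapprox_\varepsilon R^{\alpha/4}$ and $p_1 = 2\alpha+2$ with $M_1 \lessapprox_\varepsilon 1$. A parallel computation gives
\begin{equation*}
(1-\theta)\cdot\tfrac{\alpha}{4} = \frac{(2\alpha+2)/p - 1}{4} = \frac{\alpha+1}{2p} - \frac14,
\end{equation*}
again matching the claim.

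There is no real obstacle: the only thing to check carefully is that the interpolation is legitimate in the weighted setting, which it is because $H\,dx$ is a genuine positive Borel measure (finite on the bounded set $B(0,R)$), so the $L^p(B(0,R);\,H\,dx)$ form a Riesz--Thorin admissible scale. The bookkeeping of the $\varepsilon$-losses is automatic, as noted above. Thus the only substantive work is the brief algebra above verifying that the interpolated exponents coincide with $\beta(\alpha,p)$ in the three nontrivial sub-ranges.
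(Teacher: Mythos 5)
Your proof is correct and takes exactly the route the paper has in mind: the paper simply asserts that Proposition~\ref{cormain} follows ``by interpolating between Propositions~\ref{theo11} and~\ref{theo22}'', which is precisely what you carry out. Your case split and the algebra checking that $(1-\theta)\beta(\alpha)$ reproduces $\beta(\alpha,p)$ in the two nontrivial ranges are both correct, and your remark that Riesz--Thorin applies verbatim here (the source $L^2(d\xi)$ and the target measure $H\,dx$ on $B(0,R)$ are fixed; only the target exponent varies, and $H\,dx$ has finite total mass on $B(0,R)$) is the right justification.
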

It is worth noting that the exponent $\delta(\alpha, p)$ in equation \eqref{eq main1} and $\beta(\alpha, p)$ in equation \eqref{eq mainm} are related by the expression
\begin{equation*}
    \delta(\alpha, p) = \beta(\alpha, p) + \frac{1}{2} - \frac{\alpha}{p}.
\end{equation*}
Roughly speaking, the term $ -\frac{\alpha}{p} $ arises from the change of variable $ x \rightarrow \frac{x}{\lambda} $. This will be made precise in Lemma \ref{lefrac}.

\subsection{Wave-packet decomposition}
We perform a wave-packet decomposition for $\mathcal T^\lambda f$. The wave-packet viewpoint for oscillatory and Fourier integral operators goes back to Smith \cite{Smith1998JGA,Smith1998AIF} and to the curvelet-based analysis of Cand\`es--Demanet \cite{CandesDemanet2005}. These decompositions simplify the analysis of wavefronts and compositions, and clarify the operator’s classical action. 

Without loss of generality, assume that the support of $f$ is contained in the open interval $I_0=[0,1]$. In this section we fix a parameter $R \in [1,\lambda]$ and work in the ball $B(0,R)$. First, we cover $I_0$ by $R^\frac{1}{2}$ open subintervals $\theta$ of size $R^{-\frac{1}{2}}$ with bounded overlap. Let $\{\psi_\theta\}$ be a smooth partition of unity subordinate to this cover and write
\begin{equation*}
    f=\sum_\theta f_\theta, \quad f_\theta:=f \psi_\theta.
\end{equation*}
Next, we further decompose $f_\theta$ in the physical side. Cover $\mathbb{R}$ by a collection of finitely overlapping open intervals $\{I_v\}_v$ with center $v$ and length $R^{\frac12}$. Let $\eta_v$ be a smooth partition of unity subordinate to this cover, we further decompose $f$ into
\begin{equation*}
    f=\sum_{\theta,v}f_{\theta,v},\quad f_{\theta,v}:=((f\psi_\theta)^{\wedge}\eta_v)^{\vee}.
    \end{equation*}
    By Plancherel's theorem and the fact that $\{\psi_\theta\}$ and $\{\eta_v\}$ form partitions of unity in frequency and in space, respectively, we obtain the following almost-orthogonality estimate. \begin{lemma}[Almost orthogonality]\label{ortho}
\begin{equation*}
  \Big\|\sum_{\theta,v} f_{\theta,v}\Big\|_{L^2}^2 \approx \sum_{\theta,v}\|f_{\theta,v}\|_{L^2}^2,
\end{equation*}
where the implicit constants depend only on the uniform overlap bounds of the frequency and spatial partitions $\{\psi_\theta\}$ and $\{\eta_v\}$, and are therefore independent of $R$.
\end{lemma}
Let $\xi_\theta$ denote the center point of the interval $\theta$.
{Since $\phi$ is a Carleson--Sj\"olin phase, the associated nondegeneracy implies that the map $x_1  \mapsto \partial_\xi \phi(x_1,x_2,\xi_\theta)$ is locally invertible. By the implicit function theorem, for each $v \in [0,\lambda]$ there exists a smooth function $\gamma_\theta(v/\lambda,x_2)$ such that
\begin{equation}\label{gamma_theta}
\partial_{\xi}\phi(\gamma_\theta(v/\lambda,x_2),x_2,\xi_\theta) = v/\lambda .
\end{equation}
We denote the corresponding curve by
\[
\Gamma_{\xi_\theta,v}:=\{(\gamma_\theta(v/\lambda,x_2),x_2):\, |x_2|\leq 1\}.
\]
We define $\gamma_\theta^\lambda(v,x_2):=\lambda \gamma_\theta(v/\lambda,x_2/\lambda)$ and the rescaled curve $\Gamma_{\xi_\theta,v}^\lambda$ as
\begin{equation}\label{Gamma}\Gamma_{\xi_\theta,v}^\lambda:=\{(\gamma_\theta^\lambda(v,x_2),x_2):\, |x_2|\leq R\}.
\end{equation}
Given $0<\varepsilon\ll 1$, we consider a curved tube $T_{\theta,v}$ given by
\begin{equation*}
  T_{\theta,v}:= \{(x_1,x_2): |x_1-\gamma_{\theta}^\lambda(v,x_2)|\leq R^{\frac{1}{2}+\varepsilon}, |x_2|\leq R\}.
\end{equation*}}
The next lemma shows that for each $\theta,v$, $\mathcal T^\lambda f_{\theta,v}$ is essentially supported on the tube $T_{\theta,v}$.
\begin{lemma}\label{lem tube}
If $x\in B(0,R)\setminus T_{\theta,v}$, then
\[
\big|\mathcal T^\lambda f_{\theta,v}(x)\big|={\rm RapDec}(R)\,\|f\|_{L^2}.
\]
\end{lemma}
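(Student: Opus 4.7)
The claim is a standard wave packet concentration statement, which I would prove by Fourier inversion followed by non-stationary phase integration by parts in $\xi$. First, I would apply Fourier inversion to the localizing factor $\eta_v$ to write $\mathcal{T}^\lambda f_{\theta,v}(x) = c\int \widehat{f_{\theta,v}}(y)\,K(x,y)\,dy$, where $\widehat{f_{\theta,v}}(y)=\widehat{f\psi_\theta}(y)\,\eta_v(y)$ is compactly supported on an interval of length $\sim R^{1/2}$ around $v$, and the kernel is $K(x,y) := \int e^{i[\phi^\lambda(x,\xi)\pm y\xi]}\,a^\lambda(x,\xi)\,\chi_{\tilde\theta}(\xi)\,d\xi$ with $\chi_{\tilde\theta}$ a smooth cutoff to a slight enlargement of $\theta$. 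Inserting $\chi_{\tilde\theta}$ introduces only a rapidly decaying error, thanks to the Schwartz decay of $f_{\theta,v}$ away from $\theta$.

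The key estimate is the non-stationary phase lower bound $|\partial_\xi\Phi(x,\xi,y)|\gtrsim R^{1/2+\varepsilon}$, uniformly for $\xi\in\tilde\theta$, where $\Phi := \phi^\lambda(x,\xi)\pm y\xi$. By the defining identity $\partial_\xi\phi^\lambda(x^\ast,\xi_\theta)=\pm v$ for $x^\ast\in\Gamma_{\theta,v}^\lambda$, together with the Carleson--Sj\"olin non-degeneracy $|\partial_{x_1}\partial_\xi\phi^\lambda|\asymp 1$ and the mean value theorem, one gets $|\partial_\xi\phi^\lambda(x,\xi_\theta)\mp v|\gtrsim R^{1/2+\varepsilon}$ whenever $x\notin T_{\theta,v}$; since $|y-v|\le R^{1/2}\ll R^{1/2+\varepsilon}$, the lower bound survives at $\xi=\xi_\theta$. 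To extend it to all of $\tilde\theta$, I would use Taylor expansion together with a preliminary modulation $f\mapsto e^{-i\lambda\phi(0,\cdot)}f$, a harmless reduction that absorbs the $\xi$-only part of the phase into $f$, forces $\phi(0,\xi)\equiv 0$, and hence gives $\partial_\xi^k\phi^\lambda(x,\cdot)=O(|x|)\le O(R)$ for $k\ge 1$. The variation of $\partial_\xi\phi^\lambda(x,\cdot)$ across $\tilde\theta$ is then only $O(R^{1/2})$, which is dominated by $R^{1/2+\varepsilon}$.

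With the lower bound in hand, iterating $N$ integrations by parts in $\xi$ using the operator $L := (i\partial_\xi\Phi)^{-1}\partial_\xi$ yields $|K(x,y)|\lesssim_N R^{-N\varepsilon}$; combining with $\|\widehat{f_{\theta,v}}\|_{L^1}\lesssim R^{1/2}\|f\|_{L^2}$ (via Cauchy--Schwarz over the support of $\eta_v$) produces the claimed rapid decay. The main obstacle I foresee is propagating the lower bound on $|\partial_\xi\Phi|$ uniformly across $\tilde\theta$: a priori the bound $\partial_\xi^2\phi^\lambda = O(\lambda)$ would permit $\partial_\xi\Phi$ to swing by $O(\lambda R^{-1/2})$, potentially overwhelming the $R^{1/2+\varepsilon}$ lower bound and defeating the integration by parts. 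The preliminary modulation reduces matters to the safer $O(R)$ bound that is standard in the Carleson--Sj\"olin setting, and this step is what makes the whole argument go through.
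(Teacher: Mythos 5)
Your argument is essentially the paper's own: Fourier-invert the localizer $\eta_v$, localize $y$ near $v$ at scale $R^{1/2+\varepsilon/2}$ and $\xi$ near $\xi_\theta$ at scale $R^{-1/2+\varepsilon/2}$, deduce $|\partial_\xi\phi^\lambda(x,\xi)-y|\gtrsim R^{1/2+\varepsilon}$ for $x\notin T_{\theta,v}$ from the defining identity for $\gamma_\theta^\lambda$, and integrate by parts in $\xi$. You fill in two details the paper leaves implicit: the mean value theorem step using $|\partial_{x_1}\partial_\xi\phi^\lambda|\asymp 1$, and the observation that the integration by parts actually requires $\partial_\xi^k\phi^\lambda(x,\cdot)=O(R)$ for $k\ge 2$ rather than the naive bound $O(\lambda)$. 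The second point is a genuine subtlety and worth flagging.

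Two remarks on how you resolve it. For the phase at hand, $\phi(x,\xi)=-d_g\bigl(x,(r\cos\xi,r\sin\xi)\bigr)$ in geodesic normal coordinates centered at the origin, so $\phi(0,\xi)\equiv -r$ is constant and hence $\partial_\xi^k\phi^\lambda(x,\cdot)=\lambda\,\partial_\xi^k\phi(x/\lambda,\cdot)=O(|x|)\le O(R)$ for all $k\ge 1$ with no modulation at all; in the general Carleson--Sj\"olin setting this is instead guaranteed by the paper's reduction to \emph{reduced form} in Section~3 (the normalization $\mathcal{E}(0,\xi)=0$), which is in force wherever the wave packet decomposition is used. Second, framing the fix as a post hoc modulation $f\mapsto e^{-i\lambda\phi(0,\cdot)}f$ is not quite harmless when $\phi(0,\cdot)$ is nonlinear: across a single $\theta$-interval the modulation introduces a chirp of size $O(\lambda R^{-1})$, so for $\lambda\gg R$ it does not commute with the fixed decomposition $\{f_{\theta,v}\}$ — that is, $e^{-i\lambda\phi(0,\cdot)}f_{\theta,v}$ is not a wave packet of $e^{-i\lambda\phi(0,\cdot)}f$. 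The correct framing is to normalize the phase (and hence the curves and tubes) \emph{before} the decomposition, which is exactly what the reduced form does. With that caveat your argument is sound. (A minor slip: Cauchy--Schwarz over the $R^{1/2}$-length support of $\eta_v$ gives $\|\widehat{f_{\theta,v}}\|_{L^1}\lesssim R^{1/4}\|f\|_{L^2}$, not $R^{1/2}$, though either is immaterial.)
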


\begin{proof}
Recall that
\[
\mathcal T^\lambda f_{\theta,v}(x)=\int e^{i \phi^\lambda(x,\xi)}a^\lambda(x,\xi)f_{\theta,v}(\xi)\,d\xi,
\qquad
f_{\theta,v}=\big((f\psi_\theta)^{\wedge}\eta_v\big)^{\vee}.
\]
By the definition of $f_{\theta,v}$,
\[
   \mathcal T^\lambda f_{\theta,v}(x)={\rm RapDec}(R), \quad \text{if }\xi\notin B\big(\xi_\theta,R^{-\frac12+\frac\varepsilon2}\big).
\]
By Fourier inversion, we have
\[
\mathcal T^\lambda f_{\theta,v}(x)=\iint e^{i(\phi^\lambda(x,\xi)-y\xi)}\,a^\lambda(x,\xi)\,(f\psi_\theta)^{\wedge}(y)\,\eta_v(y)\,d\xi\,dy.
\]
By our construction,
\[
   \mathcal T^\lambda f_{\theta,v}(x)={\rm RapDec}(R), \quad \text{if }y \notin B\big(v,R^{\frac12+\frac\varepsilon2}\big).
\]
Thus, we may assume $y\in B(v,R^{1/2+\varepsilon}),\,\xi \in B(\xi_\theta,R^{-1/2+\varepsilon})$ by inserting smooth bump functions confining $y$ and $\xi$ to these scales.

Noting that 
\[
    \partial_\xi \phi^\lambda\big(\gamma_{\theta}^\lambda(v,x_2),x_2,\xi_\theta\big)=v,
\]
if $x\notin T_{\theta,v}$, then by the mean value theorem in $x_1$ and the Carleson--Sj\"olin nondegeneracy,
\[
    \big|\partial_\xi \phi^\lambda(x,\xi)-y\big|\gtrsim R^{1/2+\varepsilon}
\]
uniformly for $(y,\xi)$ in the above supports.

We integrate by parts using
\[
L = \frac{1}{i(\partial_\xi \phi^\lambda(x,\xi)-y)}\,\partial_\xi,
\qquad L\, e^{i(\phi^\lambda(x,\xi)-y\xi)}=e^{i(\phi^\lambda(x,\xi)-y\xi)}.
\]
Each application of $L$ contributes a factor $|\partial_\xi \phi^\lambda(x,\xi)-y|^{-1}\lesssim R^{-1/2-\varepsilon}$, but also differentiates the amplitude/cutoff, whose $\xi$-derivative costs at most $R^{1/2-\varepsilon}$. Hence each integration-by-parts step yields a net factor $R^{-2\varepsilon}$. After $N$ iterations,
\[
\big|\mathcal T^\lambda f_{\theta,v}(x)\big|\le C_N R^{-2N\varepsilon}\,\|f\|_{L^2}.
\]
Since $N$ is arbitrary, this is ${\rm RapDec}(R)\,\|f\|_{L^2}$.
\end{proof}
\subsection{Locally constant property}
Roughly speaking, the locally constant property says that if the Fourier transform of a function $f$ is supported in a ball of radius $r$, then $f$ can be treated as a constant in any ball of radius $1/r$. The fact is extensively used in the research of restriction theory; one may refer to \cite{BG,Guth2,DZ} for details. Let us formulate the locally constant property in our settings. 

\begin{lemma}\label{leloc}
The Fourier support of $\mathcal T^\lambda f$ is localized, in the sense that there exists a constant $C_0\geq1$ such that
\begin{equation}\label{local12}
    |(\mathcal T^\lambda f)^{\wedge}(\eta)|\leq {\rm RapDec}(\lambda)\|f\|_{L^2},\quad \eta \notin B(0,C_0).
\end{equation}
\end{lemma}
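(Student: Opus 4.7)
The plan is to establish the bound directly by non-stationary phase in the $x$-variable. Writing out the Fourier transform and inserting the definition of $\mathcal T^\lambda$,
\[
(\mathcal T^\lambda f)^{\wedge}(\eta) = \int f(\xi) \int e^{i[\phi^\lambda(x,\xi) - x\cdot \eta]} a^\lambda(x,\xi) \, dx \, d\xi.
\]
After the substitution $y = x/\lambda$ (Jacobian $\lambda^2$) the inner integral becomes
\[
\lambda^2 \int e^{i\lambda[\phi(y,\xi) - y\cdot \eta]} a(y,\xi) \, dy,
\]
i.e.\ an oscillatory integral with large parameter $\lambda$ and total phase $\Psi(y;\xi,\eta) := \phi(y,\xi) - y\cdot \eta$. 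The strategy is to view $\lambda$ as the large parameter and show that $\Psi$ is non-stationary in $y$ uniformly on the compact $y$-support of $a$, provided $|\eta|$ is bounded below.

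The key observation is that the support of $a(y,\xi)$ is a fixed compact set (inherited from the support of $\alpha$ in $d_g(x,y)\in(\delta_0/2,\delta_0)$), and $\phi$ is smooth there, so $\nabla_y \phi(y,\xi)$ is bounded by some absolute constant, which we can take to be $C_0/2$. Choosing $C_0$ accordingly, for any $|\eta|\ge C_0$ we obtain
\[
|\nabla_y \Psi| \;=\; |\nabla_y \phi(y,\xi) - \eta| \;\ge\; |\eta| - C_0/2 \;\ge\; |\eta|/2 \;\ge\; 1/2
\]
on the support of $a$. Moreover, higher $y$-derivatives of $\Psi$ reduce to $y$-derivatives of $\phi$, which are likewise bounded on this compact set, and $|\nabla_y \Psi|^{-1}$ is bounded by $2$.

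Repeated integration by parts via the first-order operator $L = (i\lambda)^{-1}\,|\nabla_y \Psi|^{-2}\,\nabla_y\Psi\cdot\nabla_y$, whose transpose produces a sum of terms each carrying at least one factor of $\lambda^{-1}$, then yields for every $N\in\mathbb N$
\[
\left| \int e^{i\lambda \Psi} a(y,\xi) \, dy \right| \;\le\; C_N \lambda^{-N}.
\]
Multiplying by the factor $\lambda^2$ from the Jacobian merely absorbs two powers of $\lambda$ into the rapid decay, and then integrating in $\xi$ using $\|f\|_{L^1(I_0)}\le\|f\|_{L^2(I_0)}$ (since $I_0=(0,1)$ is bounded) produces the claimed bound $\mathrm{RapDec}(\lambda)\|f\|_{L^2}$.

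The argument is essentially routine; the only point requiring attention is verifying that the $\lambda$-rescaling defining $\phi^\lambda$ and $a^\lambda$ does not degrade non-stationarity or cause derivative blow-up. This is immediate from the definition $\phi^\lambda(x,\xi)=\lambda\phi(x/\lambda,\xi)$, which gives $\partial^\beta_x \phi^\lambda = \lambda^{1-|\beta|}(\partial^\beta_y\phi)(x/\lambda,\xi)$, uniformly bounded for $|\beta|\ge 1$, and analogously $|\partial^\beta_x a^\lambda|\lesssim \lambda^{-|\beta|}\le 1$; thus all quantities arising in the integration by parts are harmless.
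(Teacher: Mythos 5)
Your proof is correct and uses the same underlying idea as the paper's: the phase $\phi^\lambda(x,\xi)-x\cdot\eta$ is non-stationary in $x$ whenever $|\eta|$ exceeds a bound on $\partial_x\phi^\lambda=(\nabla\phi)(x/\lambda,\xi)$, and repeated integration by parts yields rapid decay. The paper states this tersely ("integrating by parts in $x$") without rescaling; your rescaling to $y=x/\lambda$ makes the source of the $\lambda$-gain transparent, and you correctly note the Jacobian $\lambda^2$ and the bound $\|f\|_{L^1(I_0)}\le\|f\|_{L^2(I_0)}$, but this is detail rather than a different route.
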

\begin{proof}
It is a straightforward consequence of an integration by parts argument. Indeed, 
\begin{equation*}
(\mathcal T^\lambda f)^{\wedge}(\eta)=\iint e^{-i\eta x+i\phi^\lambda(x,\xi)}a^\lambda(x,\xi)f(\xi)\,d\xi dx.
\end{equation*}
Since $x\in B(0,\lambda),\, \xi\in B(0,1)$, it follows that there exists a constant $C_0>0$ such that 
\begin{equation*}
    \{\partial_x\phi^\lambda(x,\xi): x\in B(0,\lambda),\xi \in B(0,1)\}\subset B(0,C_0/2).
\end{equation*}
Integrating by parts in $x$, we have \eqref{local12}.
\end{proof}
As a direct consequence of Lemma \ref{leloc}, we have
\begin{lemma}[Locally constant property]\label{localc}
    For $1\leq p<\infty$, there exists a constant $C_p$ and a Schwartz weight function $w_{B(0,1)}$ such that
\begin{equation*}
   \|\mathcal T^\lambda f\|_{L^\infty(B(0,1))}\leq C_p  \|\mathcal T^\lambda f\|_{L^p(w_{B(0,1)})}+ {\rm RapDec}(\lambda)\|f\|_{L^2(B(0,1))},
\end{equation*}
where 
\[
\|\mathcal{T}^\lambda f\|_{L^p(w_{B(0,1)})}
:= \Big( \int_{\mathbb{R}^2} |\mathcal{T}^\lambda f(x)|^p\, w_{B(0,1)}(x)\, dx \Big)^{1/p}.
\]
\end{lemma}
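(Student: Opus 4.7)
The plan is to exploit the essentially compact Fourier support of $\mathcal T^\lambda f$ provided by Lemma \ref{leloc}: since $\widehat{\mathcal T^\lambda f}$ is rapidly decaying outside $B(0,C_0)$, convolving $\mathcal T^\lambda f$ with a fixed Schwartz kernel whose Fourier transform equals $1$ on $B(0,C_0)$ reproduces it up to a negligible error, and H\"older's inequality then handles the convolution. Concretely, I would fix $\varphi\in\mathcal S(\mathbb R^2)$ with $\widehat\varphi$ smooth, supported in $B(0,2C_0)$, and identically equal to $1$ on $B(0,C_0)$, and write $\mathcal T^\lambda f = \varphi*\mathcal T^\lambda f + E$, where $\widehat E=(1-\widehat\varphi)\,\widehat{\mathcal T^\lambda f}$.

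To control $E$, one revisits the proof of Lemma \ref{leloc}. After the rescaling $x=\lambda y$, the phase of the oscillatory integral for $\widehat{\mathcal T^\lambda f}(\zeta)$ becomes $\lambda(\phi(y,\xi)-y\cdot\zeta)$, and $|\nabla_y\phi|<C_0/2$ so that $|\nabla_y(\phi-y\cdot\zeta)|\ge |\zeta|/2$ for $|\zeta|\ge C_0$. Iterating integration by parts in $y$ yields the refined bound
\[
|\widehat{\mathcal T^\lambda f}(\zeta)|\le C_N\,\lambda^{2-N}(1+|\zeta|)^{-N}\,\|f\|_{L^1(B(0,1))},\qquad |\zeta|\ge C_0,\ N\in\mathbb N,
\]
which combines RapDec in $\lambda$ with polynomial decay in $\zeta$. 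Integrating against $|1-\widehat\varphi|$ gives $\|\widehat E\|_{L^1(\mathbb R^2)}={\rm RapDec}(\lambda)\|f\|_{L^1(B(0,1))}$, and hence $\|E\|_{L^\infty}={\rm RapDec}(\lambda)\|f\|_{L^1(B(0,1))}$.

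For the main term, H\"older gives, for each $x\in B(0,1)$,
\[
|\varphi*\mathcal T^\lambda f(x)|\le \|\mathcal T^\lambda f\|_{L^p(w_{B(0,1)}\,dy)}\Bigl(\int |\varphi(x-y)|^{p'}\,w_{B(0,1)}(y)^{-p'/p}\,dy\Bigr)^{1/p'},
\]
and it suffices to take $w_{B(0,1)}$ a Schwartz weight adapted to $B(0,1)$ with only polynomial decay (for instance $w_{B(0,1)}(y)=(1+|y|)^{-N_0}$ for a fixed large $N_0$), so that the Schwartz decay of $\varphi$ dominates $w_{B(0,1)}^{-p'/p}$ and the second factor is bounded uniformly in $x\in B(0,1)$ by some $C_p<\infty$. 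The argument is essentially routine; the one slightly delicate point is the joint $(\lambda,|\zeta|)$-decay of $\widehat{\mathcal T^\lambda f}$ in the second step, which cannot be read off from the statement of Lemma \ref{leloc} but must be extracted from its proof, because the cutoff $1-\widehat\varphi$ is not compactly supported and an honest $L^1$-bound on $\widehat E$ is needed rather than pointwise control on a bounded region.
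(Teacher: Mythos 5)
Your proposal follows the same basic strategy as the paper: reproduce $\mathcal T^\lambda f$ by convolving with a fixed Schwartz kernel whose Fourier transform equals $1$ on $B(0,C_0)$, discard the error, and run H\"older on the convolution. The paper's proof takes $\psi\in\mathcal S$ with $\widehat\psi\equiv 1$ on $B(0,C_0)$, sets $w_{B(0,1)}(y):=\sup_{x\in B(0,1)}|\psi(x-y)|$ (genuinely Schwartz), and splits $w_{B(0,1)}=w_{B(0,1)}^{1/p}\cdot w_{B(0,1)}^{1/p'}$ inside H\"older, giving $C_p=\|w_{B(0,1)}\|_{L^1}^{1/p'}$. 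Two comments on where you diverge.

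First, you make a good point that the statement of Lemma \ref{leloc} alone — pointwise ${\rm RapDec}(\lambda)$ of $\widehat{\mathcal T^\lambda f}$ for each fixed $\eta\notin B(0,C_0)$ — does not by itself yield ${\rm RapDec}(\lambda)$ for the pointwise error $(1-\varphi)*\mathcal T^\lambda f$, since one must integrate over the unbounded frequency region $|\zeta|\ge C_0$. Your extraction of the joint bound $|\widehat{\mathcal T^\lambda f}(\zeta)|\le C_N\lambda^{2-N}(1+|\zeta|)^{-N}\|f\|_{L^1}$ from the same integration-by-parts argument is the right fix, and it is indeed implicit rather than explicit in the paper; the paper simply writes the Fourier-side error as ${\rm RapDec}(\lambda)$ and proceeds.

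Second, your H\"older split puts $w_{B(0,1)}^{-p'/p}$ against $|\varphi|^{p'}$, which forces the weight to decay at most polynomially: if $w_{B(0,1)}$ were truly Schwartz (e.g.\ $e^{-|y|^2}$), then $w_{B(0,1)}^{-p'/p}$ grows super\-exponentially and the Schwartz decay of $\varphi$ cannot dominate it. Consequently your proposed $w_{B(0,1)}(y)=(1+|y|)^{-N_0}$ is not a Schwartz weight, and the phrase ``a Schwartz weight ... with only polynomial decay'' is a contradiction in terms. What you prove is a valid but marginally weaker statement than the one in the lemma (the paper's uses of Lemma~\ref{localc}, e.g.\ in the proof of Lemma~\ref{letran}, would still go through). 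The paper avoids this issue entirely with the split $w_{B(0,1)}=w_{B(0,1)}^{1/p}\cdot w_{B(0,1)}^{1/p'}$, which only requires $w_{B(0,1)}\in L^1$ and so is compatible with a genuine Schwartz weight; adopting that split would bring your argument into full agreement with the stated result.
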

\begin{proof}
Let $\psi$ be a radial Schwartz function such that $\widehat\psi=1$ on $B(0,C_0)$.
By Lemma \ref{leloc}, we have 
\begin{equation*}
    (\mathcal T^\lambda f)^{\wedge}(\eta)=(\mathcal T^\lambda f)^{\wedge}(\eta)\widehat{\psi}(\eta)+{\rm RapDec}(\lambda).
\end{equation*}
Therefore, if $x\in B(0,1)$, we have
\begin{multline*}
    |\mathcal T^\lambda f(x)|\leq |\mathcal T^\lambda f\ast \psi(x)|+{\rm RapDec}(\lambda)\|f\|_{L^2(B(0,1))}\\\leq\int |\mathcal T^\lambda f|(y)|\psi (x-y)|dy+{\rm RapDec}(\lambda)\|f\|_{L^2(B(0,1))}.
\end{multline*}
Now we choose a nonnegative Schwartz weight $w_{B(0,1)}$ such that
\[
\sup_{x\in B(0,1)}|\psi(x-y)|\le w_{B(0,1)}(y)\quad\text{for all }y.
\]
Then, for every $x\in B(0,1)$,
\[
|\mathcal T^\lambda f(x)|
\le \int |\mathcal T^\lambda f(y)|\, w_{B(0,1)}(y)\,dy
+ {\rm RapDec}(\lambda)\,\|f\|_{L^2(B(0,1))}.
\]
Applying H\"older’s inequality with exponents $p$ and $p'$, we obtain
\[
\int |\mathcal T^\lambda f(y)|\, w_{B(0,1)}(y)\,dy
\le \|\mathcal T^\lambda f\|_{L^p(w_{B(0,1)})}\,\|w_{B(0,1)}\|_{L^{p'}(\mathbb R^2)}.
\]
Note that $\|w_{B(0,1)}\|_{L^{p'}}$ is finite and depends only on $p$. This completes the argument.
\end{proof}

\subsection{Proposition \ref{cormain} implies Theorem \ref{theomain}}\label{subsec 2.4 implies 1.3}
Recall that, we say a compactly supported probability measure $\mu$ is $\alpha$-dimensional if there exists a constant $C>0$, such that
    \begin{equation}
       \mu(B(x,r))\leq C r^\alpha, \quad \forall x\in \mathbb{R}^2,  r>0.
    \end{equation}
The following lemma allows us to bound $\|\mathcal T^\lambda f\|_{L^p(M;\, d\mu)}$ by $\|\mathcal T^\lambda f\|_{L^p(B(0,\lambda);H dx)}$, where $H$ is a suitable $\alpha$-dimensional weight function. 
\begin{lemma}\label{lefrac}
    Let $\mu$ be an $\alpha$-dimensional measure. 
For every function $f\in L^2(B(0,1))$ and all $p\geq 1$, there exists an $\alpha$-dimensional weight function $H(x)$ such that
    \begin{equation*}
        \int |\mathcal T^\lambda f(\lambda x)|^p d\mu(x)\leq C_p \lambda^{-\alpha}\int |\mathcal T^\lambda f|^p H(x)dx+{\rm RapDec}(\lambda)\|f\|^p_{L^2(B(0,1))}    \end{equation*}
         where $C_p$ is a constant that depends only on $p$.
\end{lemma}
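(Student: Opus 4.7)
The plan is to use the locally constant property of $\mathcal{T}^\lambda f$ (Lemma \ref{localc}) to pointwise bound $|\mathcal{T}^\lambda f(\lambda x)|^p$ by a unit-scale weighted average, then integrate against $\mu$ and swap the order of integration. The factor $\lambda^{-\alpha}$ will emerge naturally from rescaling the fractal measure.

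I would begin with the change of variables $y = \lambda x$, giving
\begin{equation*}
\int |\mathcal{T}^\lambda f(\lambda x)|^p\, d\mu(x) = \int |\mathcal{T}^\lambda f(y)|^p\, d\mu_\lambda(y),
\end{equation*}
where $\mu_\lambda$ is the pushforward of $\mu$ under $x \mapsto \lambda x$. The $\alpha$-dimensionality of $\mu$, combined with the fact that $\mu_\lambda$ is a probability measure (to handle radii $r > \lambda \cdot \mathrm{inj}(M)$), yields the rescaled bound
\begin{equation*}
\mu_\lambda(B(y, r)) \leq C\, \lambda^{-\alpha}\, r^\alpha \quad \text{for all } y \in \mathbb{R}^2,\; r \geq 1.
\end{equation*}

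Next, applying Lemma \ref{localc} to the translate $\mathcal{T}^\lambda f(\cdot + y)$ (whose Fourier support is unchanged by translation, so the locally constant property applies at every point) gives
\begin{equation*}
|\mathcal{T}^\lambda f(y)|^p \leq C_p \int |\mathcal{T}^\lambda f(z)|^p\, w(z-y)\, dz + \mathrm{RapDec}(\lambda)\, \|f\|_{L^1(B(0,1))}^p,
\end{equation*}
where $w$ is an $L^1$-normalized Schwartz weight adapted to $B(0,1)$. Integrating against $d\mu_\lambda(y)$ and applying Fubini (with $\mu_\lambda(\mathbb{R}^2)=1$ absorbing the error term), the main term becomes $C_p \int |\mathcal{T}^\lambda f(z)|^p H_\lambda(z)\, dz$ with $H_\lambda(z) := \int w(z-y)\, d\mu_\lambda(y)$, and one defines $H(z) := \lambda^\alpha H_\lambda(z)$ so that the inequality takes the form stated in the lemma.

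The only substantive remaining step, and the expected main obstacle, is verifying that $H$ is $\alpha$-dimensional with $A_\alpha(H)$ bounded independently of $\lambda$. For a ball $B(z_0, R)$ with $R \geq 1$, Fubini gives
\begin{equation*}
\int_{B(z_0, R)} H(z)\, dz = \lambda^\alpha \int \left(\int_{B(z_0,R)} w(z-y)\, dz\right) d\mu_\lambda(y),
\end{equation*}
where the inner integral is $O(1)$ uniformly in $y$ and decays rapidly in $|y-z_0|/R$ when $|y-z_0| \geq 2R$. A dyadic decomposition of the $y$-space into annuli $B(z_0, 2^{k+1}R)\setminus B(z_0, 2^k R)$, combined with the rescaled bound $\mu_\lambda(B(z_0, 2^{k+1}R)) \leq C\lambda^{-\alpha}(2^{k+1}R)^\alpha$, yields a geometric series in $k$ summing to $O(R^\alpha)$, provided the Schwartz decay exponent of $w$ is chosen larger than $2+\alpha$. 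This tail bookkeeping is the one place where something beyond formal manipulation is needed; the rest of the argument is routine.
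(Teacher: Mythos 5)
Your proposal matches the paper's proof of this lemma essentially step for step: both use the locally constant property (convolve $\mathcal{T}^\lambda f$ with a Schwartz bump, apply H\"older), define $H$ as $\lambda^\alpha$ times the convolution of the pushforward measure $\mu^\lambda$ with the Schwartz weight, and verify $A_\alpha(H)\lesssim 1$ from the $\alpha$-dimensionality of $\mu$ at scale $r/\lambda$. The only cosmetic difference is that you spell out the dyadic tail bookkeeping in the final verification, which the paper compresses into ``it is then routine to check.''
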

\begin{proof}
Let $\psi$ denote the Schwartz function from the proof of Lemma \ref{localc}, which was used to localize $(\mathcal T^\lambda f)^{\wedge}$ to unit spatial scale.
As in the proof of Lemma \ref{localc}, we have
\[
|\mathcal T^\lambda f(x)| \le |\mathcal T^\lambda f * \psi|(x)
   + {\rm RapDec}(\lambda)\,\|f\|_{L^2(B(0,1))}.
\]
Hence, we may replace $\mathcal T^\lambda f$ by $\mathcal T^\lambda f * \psi$.
By H\"older's inequality, we have
\begin{equation}
\begin{aligned}
    \int \big|\mathcal T^\lambda f(\lambda x)\big|^p \, d\mu(x)
    &\le \int \Big|\int \mathcal T^\lambda f(y)\,\psi(\lambda x-y)\, dy \Big|^p \, d\mu(x)
      + {\rm RapDec}(\lambda)\,\|f\|_{L^2(B(0,1))}^p \\
    &\le \int \int \big|\mathcal T^\lambda f(y)\big|^p \, \big|\psi(\lambda x-y)\big| \, dy \,
      \Big(\int \big|\psi(\lambda x-z)\big| \, dz \Big)^{\frac{p}{p'}} d\mu(x) \\
    &\hspace{2em}+ {\rm RapDec}(\lambda)\,\|f\|_{L^2(B(0,1))}^p \\
    &\le C_p \int \int \big|\mathcal T^\lambda f(y)\big|^p \, \big|\psi(\lambda x-y)\big| \, dy \, d\mu(x)
      + {\rm RapDec}(\lambda)\,\|f\|_{L^2(B(0,1))}^p .
\end{aligned}
\end{equation}
Here we used that $\int \big|\psi(\lambda x-z)\big|\,dz=\int |\psi(u)|\,du$ is a constant independent of $x$ and $\lambda$.
We define the $\alpha$-dimensional weight function $H(y)$ as
\begin{equation*}
H(y):=\lambda^\alpha\int |\psi(\lambda x-y)|d\mu(x)=\lambda^\alpha[\mu^\lambda*|\psi|](y),
\end{equation*}
where $\mu^\lambda$ is the pushforward measure of $\mu$ under the dilation map $x\to\lambda x$ defined as
\[
\int h(x) \, d\mu^\lambda(x) := \int h(\lambda x) \, d\mu(x).
\]
Given the rapid decay of $|\psi|$, it is then routine to check that for every $r\ge1$,
\begin{equation*}
   \int_{B(x,r)} H(y) dy\lesssim\lambda^\alpha\mu^\lambda(B(x,r))\lesssim \lambda^\alpha \Big(\frac{r}{\lambda}\Big)^\alpha=r^\alpha. 
\end{equation*}
\end{proof}
\begin{proof}[Proof of Theorem \ref{theomain}]We are now ready to prove Theorem \ref{theomain} assuming Proposition \ref{cormain}. By the definition of $\chi_\lambda$, we have
\begin{equation}
    e_\lambda = \chi_\lambda e_\lambda + R_\lambda e_\lambda.
\end{equation}
The contribution of the $R_\lambda$ term is negligible. Finally, by combining Proposition \ref{cormain} and Lemma \ref{lefrac}, we obtain the desired estimates for $\|\chi_\lambda e_\lambda\|_{L^p(M;\, d\mu)}$, thus completing the proof of Theorem \ref{theomain}.
\end{proof}
\subsection{Variable-coefficient bilinear restriction estimate} Finally, we record a bilinear estimate that will be used to control the contribution of broad points in the next section.
Let $\tau_1,\tau_2$ be two disjoint intervals contained in $[0,1]$. Assume $\supp f_{\tau_i} \subset \tau_i, i=1,2$, we say the transversality condition holds if there exists a constant $\nu>0$ such that 
\begin{equation*}
    |{\rm det}(\partial_\xi\partial_x\phi(x,\xi_1),\partial_\xi\partial_x\phi(x,\xi_2))|\geq \nu>0,\quad \text{for all }\xi_i \in \tau_i,i=1,2.
\end{equation*}
\begin{remark}
One easily verifies that if $|\xi_1 - \xi_2| \gtrsim K^{-1}$, 
then the transversality condition holds with $\nu \sim K^{-1}$, 
as is evident from the toy model $\phi(x,\xi)=x_1\xi+x_2\xi^2$.

\end{remark}

\begin{theorem}[Theorem 6.2 in \cite{BCT}]\label{multit}
   Let $q\geq 4$. Suppose that the  transversality condition holds, then for each $\varepsilon_1>0,$ there exists constants $ C_{\varepsilon_1},\, C>0$
   \begin{equation}\label{multi}
       \|\mathcal T^\lambda f_{\tau_1} \mathcal T^\lambda f_{\tau_2}\|_{L^{\frac{q}{2}}(B(0,R))}\leq C_{\varepsilon_1}\nu^{-C} R^{\varepsilon_1} \|f_{\tau_1}\|_{L^2}\|f_{\tau_2}\|_{L^2}.
   \end{equation}
\end{theorem}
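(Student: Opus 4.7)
The plan is to prove the endpoint case $q=4$ and then interpolate with a trivial $L^\infty$ bound to cover the full range $q\ge 4$. This is the classical route for bilinear Fourier extension estimates, originating in the flat setting with Tao--Vargas--Vega and adapted to variable-coefficient Carleson--Sj\"olin operators by Lee and Bennett--Carbery--Tao.

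For the trivial bound, Cauchy--Schwarz in $\xi$ gives $\|\mathcal T^\lambda f_{\tau_i}\|_{L^\infty}\lesssim \|f_{\tau_i}\|_{L^2}$, hence $\|\mathcal T^\lambda f_{\tau_1}\,\mathcal T^\lambda f_{\tau_2}\|_{L^\infty}\lesssim \|f_{\tau_1}\|_{L^2}\|f_{\tau_2}\|_{L^2}$. Riesz--Thorin interpolation with the $q=4$ estimate then yields the full range $q\ge 4$, preserving the $R^\varepsilon$ loss and producing a constant of the shape $\nu^{-C}$.

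For the endpoint $q=4$, one first treats the model case in which $\phi^\lambda$ is replaced by its quadratic Taylor expansion at a reference point $(\bar x,\xi_\tau)$. In that case the Fourier transform of each $\mathcal T^\lambda f_{\tau_i}$ is supported essentially on a smooth curve, so the Fourier transform of the product is the associated convolution. Applying the change of variables $(\xi_1,\xi_2)\mapsto \bigl(\partial_x\phi(\bar x,\xi_1)+\partial_x\phi(\bar x,\xi_2)\bigr)$, one sees that the Jacobian is exactly the transversality determinant and is bounded below by $\nu$. Plancherel then yields $\|\mathcal T^\lambda f_{\tau_1}\,\mathcal T^\lambda f_{\tau_2}\|_{L^2}\lesssim \nu^{-1/2}\|f_{\tau_1}\|_{L^2}\|f_{\tau_2}\|_{L^2}$ with no $R$ loss.

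To pass from the flat model to the Carleson--Sj\"olin setting, the standard device is induction on scales. Partition $B(0,R)$ into cubes of side $r=R^{1-\delta}$ for some $\delta\ll\varepsilon$, e.g.~$\delta=\varepsilon^2$. On each cube, $\phi^\lambda$ differs from its quadratic Taylor expansion by an error whose contribution becomes negligible after parabolic rescaling $\xi\mapsto R^{-\delta/2}\xi$; after rescaling the operator is again Carleson--Sj\"olin at spatial scale $R^{1-\delta}$, now with transversality constant degraded by at most a fixed power of $R^\delta$. One invokes the induction hypothesis at the smaller scale and sums across the $O(R^{2\delta})$ cubes using almost-orthogonality, closing the induction over $O(\log R)$ steps with cumulative loss $R^{O(\delta)\log\log R}\le R^\varepsilon$. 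The main obstacle throughout is tracking the transversality constant $\nu$ through the rescalings and verifying that the cumulative degradation contributes only the polynomial factor $\nu^{-C}$ stated in the theorem, rather than compounding exponentially in the number of induction steps.
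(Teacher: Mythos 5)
The paper does not prove this statement; it is quoted verbatim as Theorem~6.2 of the cited reference \cite{BCT} and invoked as a black box, so there is no internal proof to compare against. That said, your sketch follows the same flat-model-plus-induction-on-scales template that \cite{BCT} (and Lee's earlier bilinear work) actually uses, so the overall architecture is appropriate and your interpolation step from $q=4$ to $q\ge 4$ (trivial $L^\infty$ bound $\|\mathcal T^\lambda f_{\tau_i}\|_{L^\infty}\lesssim\|f_{\tau_i}\|_{L^2}$ plus Riesz--Thorin for the bilinear form) is correct.

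There is, however, a real gap in the core of your argument, and it is exactly the one you flag at the end. You assert that after localizing to cubes of side $R^{1-\delta}$ and rescaling, the transversality constant is ``degraded by at most a fixed power of $R^\delta$.'' If that were the actual behaviour, the induction could not close: one needs on the order of $\delta^{-1}\log\log R$ iterations to bring the scale down to $O(1)$, so a per-step loss of $R^{c\delta}$ in $\nu$ compounds to a factor that is not of the form $\nu^{-C}$ for a fixed $C$. In the Bennett--Carbery--Tao argument the rescaling is designed precisely so that the transversality determinant is \emph{preserved} (indeed it can only improve, since the angular separation between $\tau_1$ and $\tau_2$ does not shrink under parabolic rescaling to a subcube); the $\nu^{-C}$ in the final bound comes from the base case and a single application of the perturbation lemma, not from accumulating losses. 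Your sketch also treats the error term rather loosely: replacing $\phi^\lambda$ by its quadratic Taylor polynomial at $(\bar x,\xi_\tau)$ introduces an error of genuine size on a cube of side $R^{1-\delta}$, and controlling it is exactly where the perturbation lemma of \cite{BCT} enters; ``becomes negligible after rescaling'' is not a substitute for that lemma.

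Finally, it is worth noting that for this particular case (bilinear, one-dimensional frequency variable, two-dimensional physical space, target exponent $q=4$) the full BCT machinery is not needed. Transversality makes the phase $\phi^\lambda(x,\xi_1)+\phi^\lambda(x,\xi_2)$ non-degenerate in the Hörmander sense, so the classical $L^2$ argument (Plancherel after the change of variables $(\xi_1,\xi_2)\mapsto\partial_x\phi^\lambda(x,\xi_1)+\partial_x\phi^\lambda(x,\xi_2)$, applied to the exact phase rather than its Taylor model) already gives
\[
\|\mathcal T^\lambda f_{\tau_1}\mathcal T^\lambda f_{\tau_2}\|_{L^2(B_R)}\lesssim \nu^{-1/2}\|f_{\tau_1}\|_{L^2}\|f_{\tau_2}\|_{L^2}
\]
with no $R^\varepsilon$ loss and no induction on scales at all. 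Combining this with your $L^\infty$ bound and interpolating gives the full statement more simply; the induction-on-scales route is only forced on you in the genuinely multilinear, higher-dimensional setting that Theorem~6.2 of \cite{BCT} covers.
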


\section{Proof of  Proposition \ref{theo11}}\label{sec 3}
In this section, we prove Proposition \ref{theo11}. The key input is a weighted estimate for the oscillatory integral operator $\mathcal T^\lambda$. We follow the Bourgain--Guth argument: we partition $B(0,R)$ according to the local configuration of wave-packets at a point. A point is \emph{broad} if at least two intersecting wave-packets with significant contributions have well-separated directions; otherwise, if the directions of all wave-packets with significant contributions concentrate near a single point of $S^1$, it is \emph{narrow}. We control the broad contribution via a bilinear estimate and treat the narrow contribution using an induction-on-scales argument.

\subsection{Summary of notation used in the multiscale analysis}
In this subsection we record the definitions and notation for the families of geometric objects—at various scales—used in the proof. This summary is intended to guide the reader through the argument.

First, fix the parameters $R,\lambda,\varepsilon$ with $1 \le R \le \lambda$ and $0<\varepsilon\ll 1$. 
Set $K = R^{\kappa}$ with $\kappa = \varepsilon^{2}$, $R_{1} = R/K^{2}$, and $K_{1} = R_{1}^{\kappa}$. Let $\rho$ range over $[1, R^{1/2}]$. See Table \ref{tab:multiscale-params}.

\begin{table}[htbp]
\centering
\caption{Parameters and scales used in the proof.}
\label{tab:multiscale-params}
\begin{tabular}{ccc}
\hline
\textbf{Symbol} & \textbf{Description} & \textbf{Range/Notes} \\
\hline
$\lambda$ & Frequency scale & $\lambda \ge 1$ \\
$R$ & Spatial radius & $1 \le R \le \lambda$ \\
$\varepsilon$ & Small parameter & $0 < \varepsilon \ll 1$ \\
$\rho$ & Intermediate scale & $1 \le \rho \le R^{1/2}$ \\
$\kappa$ & $\varepsilon^2$ & Fixed once $\varepsilon$ is fixed \\
$K$ & $R^\kappa$ & Auxiliary scale \\
$R_1$ & $R/K^2$ & Rescaled spatial radius \\
$K_1$ & $R_1^\kappa$ & Rescaled auxiliary scale \\
$\alpha$ & Dimension of the weight & $0 < \alpha \le 2$; see \eqref{fractal} \\
$\beta(\alpha,p)$ & Exponent in weighted estimate & See \eqref{eq mainm} \\
$\delta(\alpha,p)$ & Exponent in main theorem & See \eqref{eq delta} \\
\hline
\end{tabular}
\end{table}

We work in a ball $B(x_0,R)\subset B(0,\lambda)$. Without loss of generality, we may assume that $x_0=0$ and write $B_R=B(0,R)$. We use $\mathcal{Q}=\{Q\}$ to denote a finitely overlapping collection of squares of side length $K^2$. 

By \eqref{gamma_theta}, for each pair $(a,b)\in[0,1]\times[0,R]$ there exists a function $\gamma_{a,b}=\gamma_{a,b}(x_2)$ such that
\begin{equation}
    \partial_\xi \phi\big(\gamma_{a,b}(x_2),x_2,a\big)=\frac{b}{\lambda}.
\end{equation}
We denote its rescaling by $\gamma^\lambda_{a,b}({}\cdot{}):=\lambda\,\gamma_{a,b}({}\cdot/\lambda)$ and set
\[
\Gamma_{a,b}^\lambda:=\big\{ \big(\gamma_{a,b}^\lambda(x_2),\,x_2\big) \colon |x_2|\le R \big\}.
\]
This is consistent with our definition of $ \Gamma_{\xi_\theta,v}^\lambda $ as given in \eqref{Gamma}.
We shall refer to $a$ as the direction of the curve.

For every $1\le \rho \le R^{1/2}$ and $(a,b)\in[0,1]\times[0,R]$, let $I_{a,b}$ denote the $K^2\rho^{1-4\kappa}$-neighborhood of $\Gamma_{a,b}^\lambda$. 
We decompose $I_{a,b}$ into a finitely overlapping collection $\mathcal{I}_{a,b}$ of curved sub-tubes $I$, each of length $K^2\rho^{2-4\kappa}$ and the same width $K^2\rho^{1-4\kappa}$; see Figure \ref{curI} for an illustration.
\begin{figure}[htbp]
\centering
\begin{tikzpicture}[scale=1.0]

\def\w{0.50}   
\def\yA{2.1}   
\def\yB{4.1}   
\def\H{6.0}    
\def\C{      (0.00,0)
             (-0.16,1)
             (-0.22,2)
             (-0.14,3)
             (0.02,4)
             (0.18,5)
             (0.30,6) }

\coordinate (Lbot) at (-\w+0.00,0);
\coordinate (Ltop) at (-\w+0.30,6);
\coordinate (Rtop) at ( \w+0.30,6);
\coordinate (Rbot) at ( \w+0.00,0);

\fill[gray!18]
  plot [smooth, tension=1.15]
    coordinates {(-\w+0.00,0) (-\w-0.16,1) (-\w-0.22,2) (-\w-0.14,3)
                 (-\w+0.02,4) (-\w+0.18,5) (-\w+0.30,6)}
  to[out=0,in=180] (Rtop)
  --
  plot [smooth, tension=1.15]
    coordinates {( \w+0.30,6) ( \w+0.18,5) ( \w+0.02,4) ( \w-0.14,3)
                 ( \w-0.22,2) ( \w-0.16,1) ( \w+0.00,0)}
  to[out=180,in=0] (Lbot) -- cycle;

\draw[black, line width=0.8pt, line cap=round]
  plot [smooth, tension=1.15]
    coordinates {(-\w+0.00,0) (-\w-0.16,1) (-\w-0.22,2) (-\w-0.14,3)
                 (-\w+0.02,4) (-\w+0.18,5) (-\w+0.30,6)};
\draw[black, line width=0.8pt, line cap=round]
  plot [smooth, tension=1.15]
    coordinates {( \w+0.00,0) ( \w-0.16,1) ( \w-0.22,2) ( \w-0.14,3)
                 ( \w+0.02,4) ( \w+0.18,5) ( \w+0.30,6)};
\draw[black, line width=0.8pt, line cap=round] (Ltop) to[out=0,in=180] (Rtop);
\draw[black, line width=0.8pt, line cap=round] (Rbot) to[out=180,in=0] (Lbot);

\draw[dashed, gray, thick]
  plot [smooth, tension=1.15] coordinates {\C};

\begin{scope}
  \clip
    plot [smooth, tension=1.15]
      coordinates {(-\w+0.00,0) (-\w-0.16,1) (-\w-0.22,2) (-\w-0.14,3)
                   (-\w+0.02,4) (-\w+0.18,5) (-\w+0.30,6)}
    to[out=0,in=180] (Rtop) --
    plot [smooth, tension=1.15]
      coordinates {( \w+0.30,6) ( \w+0.18,5) ( \w+0.02,4) ( \w-0.14,3)
                   ( \w-0.22,2) ( \w-0.16,1) ( \w+0.00,0)}
    to[out=180,in=0] (Lbot) -- cycle;
  \fill[red!55] (-1,\yA) rectangle (1,\yB);
\end{scope}

\node[red!70!black, font=\large] at (1.05,3) {$I$};

\draw[<->, thick] (1.55,0) -- (1.55,\H);
\node[right] at (1.55,3) {$R$};
\node[right] at (0.5,\H+0.2) {$I_{a,b}$};

\draw[<->, thick, red!70!black] (-1.35,\yA) -- (-1.35,\yB);
\node[left, red!70!black] at (-1.35,3) {$K^2\rho^{2-4\kappa}$};

\draw[<->, thick] (-\w,0) -- (\w,0);
\node[below] at (0,0) {$K^2\rho^{1-4\kappa}$};

\end{tikzpicture}

\caption{Curved tubes $I_{a,b}$ and a sub-tube $I$.
The tube $I_{a,b}$ has length $R$ and width $K^2\rho^{1-4\kappa}$,
while $I$ denotes a sub-tube of $I_{a,b}$ with length $K^2\rho^{2-4\kappa}$ and the same width.
The dashed curve is $\Gamma_{a,b}^\lambda$.}
\label{curI}
\end{figure}

In this way we obtain the collection
\[
\mathcal{I}
:= \bigl\{\, I \colon I \in \mathcal{I}_{a,b}\text{ is a length } K^2\rho^{2-4\kappa} \text{ sub-tube of } I_{a,b},
\ \text{for some } (a,b)\in[0,1]\times[0,R] \,\bigr\}.
\]
Similarly, we define the collection of $R/\rho$-neighborhoods of $\Gamma_{a,b}^\lambda$ by
\[
\mathcal{O}
:= \bigl\{\, O \colon
O \text{ is the $R/\rho$-neighborhood of } \Gamma_{a,b}^\lambda,\ \text{for some }(a,b)\in[0,1]\times[0,R] \,\bigr\}.
\]
For simplicity, we shall continue to write $O$ for an element of $\mathcal{O}$.

\begin{remark}
For the translation-invariant model phase $\phi(x,\xi)=x_1\xi+x_2\xi^2$, the curve $\Gamma_{a,b}^\lambda$ becomes a line segment of length $R$, lying on the line
\[
\{(x_1,x_2)\colon x_1=b-2a\,x_2\}.
\]
In this case, the elements of $\mathcal{I}$ are  rectangles of dimensions $K^2\rho^{1-4\kappa}\times K^2\rho^{2-4\kappa}$, while the elements of $\mathcal{O}$ are rectangles of dimensions $(R/\rho)\times R$.
\end{remark}

For later use, we also introduce the notation $Q'$, $I'$, $O'$, and $T'_{\tau,v}$ for curved tubes at several scales produced by parabolic rescaling. Recall that $R_1=R/K^2$ and $K_1=R_1^{\kappa}$.

We use $Q'_{a,b}$ to denote the $K\,K_1^2$–neighborhood of $\Gamma_{a,b}^\lambda$.
Each tube $Q'_{a,b}$ is further decomposed along its longitudinal direction into a finitely overlapping collection $\{Q'\}$ of shorter tubes of length $K^2K_1^2$. We denote the collection of all such $Q'$ over all possible  $(a,b)\in[0,1]\times[0,R]$ by $\mathcal{Q}'$.

Similarly, let $I'_{a,b}$ denote the $K\,K_1^2\rho^{\,1-4\kappa}$–neighborhood of $\Gamma_{a,b}^\lambda$.
We partition $I'_{a,b}$ into a finitely overlapping family of shorter tubes of length $K^2K_1^2\rho^{\,2-4\kappa}$, and denote the collection of all such $I'$ over all possible $(a,b)\in[0,1]\times[0,R]$ by $\mathcal{I}'$.
For convenience, we write $I'$ for an element of $\mathcal{I}'$.

Next, let $\mathcal{O}'$ denote the collection of $R/(K\rho)$-neighborhoods of the curve segments $\Gamma_{a,b}^\lambda$. 
For convenience, we write $O'$ for an element of $\mathcal{O}'$.

Finally, let $\tau\subset[0,1]$ be an interval of length $K^{-1}$ centered at $\xi_\tau$. 
We choose a collection of parameters $\{v\}\subset[0,R]$ that are $R/K$–separated, and denote by $T_{\tau,v}'$ the $R/K$–neighborhood of the curve $\Gamma_{\xi_\tau,v}^\lambda$. 
We write $\mathfrak{T}_\tau'$ for the collection of all such tubes.

\begin{table}[h]
\centering
\caption{Summary of the main geometric objects used in the multiscale analysis.}
\renewcommand{\arraystretch}{1.5}
\begin{tabular}{c c c c c}
\hline
\textbf{Symbol} & \textbf{Description} & \textbf{Collection} & \textbf{Width} & \textbf{Length} \\
\hline
$Q$ & Square & $\mathcal{Q}$ & $K^2$ & $K^2$ \\
$I$ & Sub-tube of $I_{a,b}$ & $\mathcal{I}$ & $K^2\rho^{1-4\kappa}$ & $K^2\rho^{2-4\kappa}$ \\
$O$ & $R/\rho$–neighborhood of $\Gamma_{a,b}^\lambda$ & $\mathcal{O}$ & $R/\rho$ & $R$ \\
$Q'$ & Rescaled tube & $\mathcal{Q}'$ & $K K_1^2$ & $K^2 K_1^2$ \\
$I'$ & Sub-tube of $I'_{a,b}$ & $\mathcal{I}'$ & $K K_1^2\rho^{1-4\kappa}$ & $K^2 K_1^2\rho^{2-4\kappa}$ \\
$O'$ & $R/(K\rho)$–neighborhood of $\Gamma_{a,b}^\lambda$ & $\mathcal{O}'$ & $R/(K\rho)$ & $R$ \\
$T_{\tau,v}'$ & Tube with base $v$ and direction $\xi_\tau$ & $\mathfrak{T}_\tau'$ & $R/K$ & $R$ \\
\hline
\end{tabular}
\end{table}

\subsection{The case $1\le \alpha \le 2$}
We now prove Proposition \ref{theo11} in the range $1\le \alpha \le 2$ via the Bourgain--Guth argument.

\begin{proposition}\label{prop 1,2}
Let $1\le \alpha\le 2$. Let $H$ be an $\alpha$-dimensional weight. For every $\varepsilon>0$ and $1\le R\le \lambda$, there exists a constant $C_\varepsilon>0$ such that
\begin{equation}\label{maines1}
    \|\mathcal T^\lambda f\|_{L^2(B(0,R); H\,dx)}
    \le C_\varepsilon A_\alpha(H)^\frac12 R^{\frac{\alpha}{4}+\varepsilon} \|f\|_{L^2}.
\end{equation}
\end{proposition}

Our proof of \eqref{maines1} follows the approach of Zorin-Kranich \cite{ZK}. The estimate \eqref{maines1} can be viewed as a variable-coefficient analogue of Theorem 1.6 in \cite{DZ}. In particular, when $\alpha=1$, Du--Zhang’s Theorem 1.6 yields sharp results on Carleson’s pointwise convergence for the Schr\"odinger operator in $\mathbb{R}^{1+1}$. Theorem 1.6 in \cite{DZ} also covers higher dimensions. We plan to study the corresponding variable-coefficient extensions in a sequel.

\subsubsection{Reductions}
First, decompose $B(0,R)$ into a collection $\mathcal{Q}$ of squares of side length $K^2$ with bounded overlaps, and let $k$ range over dyadic numbers with $k\ge -C\log_2 K$. Define
\[
\mathcal Q_k := \{ Q \in \mathcal{Q} : \textstyle\int_Q H(x)\,dx \sim 2^{-k} \}.
\]
We may assume that $R^{-100} \le 2^{-k} \le R$, since the contribution from the range $2^{-k} < R^{-100}$ is negligible. By a dyadic pigeonholing argument, we may further assume that there exists a dyadic number $k_0$ such that
\begin{equation}\label{eq:weighted-L2}
    \|\mathcal{T}^\lambda f\|_{L^2(B(0,R); H\,dx)}^2 
    \le C_\varepsilon R^\varepsilon \sum_{Q \in \mathcal Q_{k_0}} \|\mathcal{T}^\lambda f\|_{L^2(Q;\,Hdx)}^2.
\end{equation}
Set
\[
\mu_Q =
\begin{cases}
1, & Q \in \mathcal Q_{k_0},\\[4pt]
0, & \text{otherwise}.
\end{cases}
\]
Then the right-hand side of \eqref{eq:weighted-L2} is bounded by
\begin{equation}\label{eq:muL2}
    2^{-k_0}\sum_{Q} \mu_Q^2 \|\mathcal{T}^\lambda f\|_{L^\infty(Q)}^2.
\end{equation}

To incorporate the ball condition for $H$ (see \eqref{fractal}), namely
\[
\int_{B_r } H(x)\,dx \leq A_\alpha(H)\, r^{\alpha},\qquad K^2 \leq r\leq R,
\]
we require that
\begin{equation}\label{eq:mucondition}
    \sum_{Q\subset B_r} \mu_{Q} \leq A_\alpha(H)\,2^{k_0} r^{\alpha}, 
    \qquad K^2 \le r \le R.
\end{equation}

Define
\begin{equation}\label{eq:defW}
    \mathcal{W}
    := \sup_{1 \le \rho  \le R^{1/2}}
    \rho^{-1/2} 
    \sup_{O \in \mathcal{O}}
    \Bigg( 
        \sum_{\substack{I \in \mathcal{I}\\ I \subset O}}
        \Big( \sum_{Q \subset I} \mu_Q^2 \Big)^2
    \Bigg)^{1/4}.
\end{equation}
Here, the summation $\sum_{I\in\mathcal I,\, I\subset O}$ runs over the subcollection of tubes $I\in\mathcal I$ that have the same direction as $O$ and form a finitely overlapping cover of $O$.

It is straightforward to show that 
\begin{equation}\label{eq:Wbound}
    \mathcal{W} \le C_\varepsilon\,A_\alpha(H)^\frac12\, 2^{\frac{k_0}{2}} R^{\tfrac{\alpha}{4}+\varepsilon}.
\end{equation}
Indeed, using that the tubes $I$ have bounded overlap, we obtain
\[
\begin{aligned}
    \mathcal{W}
    &= \sup_{1 \le \rho  \le R^{1/2}}
       \rho^{-1/2}
       \sup_{O \in \mathcal{O}}
       \Bigg( \sum_{\substack{I \in \mathcal{I}\\ I \subset O}} 
       \Big( \sum_{Q \subset I} \mu_Q^2 \Big)^2 \Bigg)^{1/4} \\
    &\le \sup_{1 \le \rho \le R^{1/2}}
       \rho^{-1/2}
       \sup_{O \in \mathcal{O}}
       \Big( \sum_{Q \subset O} \mu_Q^2 \Big)^{1/4}
       \sup_{I \in \mathcal{I}} \Big( \sum_{Q \subset I} \mu_Q^2 \Big)^{1/4} \\
    &\le C_\varepsilon A_\alpha(H)^{1/2} R^{\varepsilon}
       \sup_{1 \le \rho \le R^{1/2}}
       \rho^{-1/2}
       \Big( 2^{k_0}\, \rho (R/\rho)^{\alpha} \Big)^{1/4}
       \Big( 2^{k_0}\, \rho \cdot \rho^{\alpha} \Big)^{1/4} \\
    &\le C_\varepsilon A_\alpha(H)^{1/2} R^{\varepsilon} 
       2^{k_0/2} R^{\alpha/4}.
\end{aligned}
\]
Here, in the second-to-last inequality, we applied the ball condition \eqref{eq:mucondition} twice, with radii $r=R/\rho$ and $r=\rho$, respectively. Consequently, to prove \eqref{maines1}, it suffices to establish the following proposition, which is the key technical estimate in the proof of our main theorem.

\begin{proposition}\label{prop muQ}  For every $\varepsilon > 0$ and $1 \leq R \leq \lambda$, there exists a constant ${ C_\varepsilon}> 0$ such that
\begin{equation}\label{eq:muQ}
  \Big( \sum_{Q} \mu_Q^2 \|\mathcal T^\lambda f\|_{L^\infty(Q)}^2 \Big)^{\frac{1}{2}}
  \leq C_\varepsilon\, R^\varepsilon \, \mathcal{W}\, \|f\|_{L^2}.
\end{equation}
Here $\mathcal W$ is as in \eqref{eq:defW}.
\end{proposition}

In fact, using \eqref{eq:weighted-L2}, \eqref{eq:muL2}, \eqref{eq:Wbound} and \eqref{eq:muQ}, we obtain
\begin{align*}
    \int_{B(0,R)} |\mathcal{T}^\lambda f|^2 H(x)\,dx 
    &\le C_\varepsilon R^{3\varepsilon} 2^{-k_0}  \sum_{Q} \mu_{Q}^2 \| \mathcal{T}^\lambda f \|_{L^\infty(Q)}^2  \\
    &\le C_\varepsilon R^{3\varepsilon} A_\alpha(H)2^{-k_0} 2^{k_0} R^{\frac{\alpha}{2}} \|f\|_{L^2}^2.
\end{align*}

In general, let $\mu_Q \ge 0$ be arbitrary for each $Q \in \mathcal{Q}$. The definition of $\mathcal W$ is unchanged. Define $\mathcal{F}(\lambda, R)$ to be the smallest positive number such that 
\begin{equation*}
    \Bigg( 
        \sum_{Q} \mu_Q^2 
        \|\mathcal{T}^\lambda f\|_{L^\infty(Q)}^2
    \Bigg)^{1/2}
    \le 
    \mathcal{F}(\lambda, R)\, 
    \mathcal{W}\,
    \|f\|_{L^2}.
\end{equation*}
To prove \eqref{eq:muQ}, it suffices to show for each $\varepsilon>0$, there exists $\bar{C}_\varepsilon>0$ such that
\begin{equation}\label{inred}
    \mathcal{F}(\lambda,R)\leq \bar{C}_\varepsilon R^\varepsilon,\quad  1\leq R\leq \lambda. 
\end{equation}

\noindent \textbf{Induction hypothesis.} To prove \eqref{inred},  we argue by induction on scales. First, it is straightforward to verify that, for $N_\varepsilon\in\N$ to be chosen later, one can choose $\bar C_\varepsilon$ sufficiently large so that
\begin{equation}
\mathcal{F}(\lambda,R)\le \bar C_\varepsilon R^\varepsilon,\qquad \text{for all } 1\le R\le \min\{N_\varepsilon,\,\lambda\}.
\end{equation}
Now suppose $R\ge N_\varepsilon$. As the induction hypothesis, assume that
\begin{equation}\label{assumption}
\mathcal{F}(\bar\lambda,\bar R)\le \bar C_\varepsilon \bar R^\varepsilon,
\end{equation}
for all $1\le \bar R\le R/2$ and $\bar R\le \bar\lambda\le \lambda/2$. To close the induction, it suffices to establish
\begin{equation}\label{ingoal}
\mathcal{F}(\lambda,R)\le \bar C_\varepsilon R^\varepsilon,
\end{equation}
under the assumptions $R\ge N_\varepsilon$ and \eqref{assumption}. To achieve this, we invoke the following parabolic rescaling lemma, which links estimates across different scales.

\subsubsection{Parabolic rescaling lemma}
For technical reasons, to complete the induction argument in the narrow case, it is customary to prove estimates for a class of phase functions. 
To this end, we impose quantitative conditions on the phase function $\phi$ and introduce the notion of \emph{reduced form}.

Roughly speaking, a phase function $\phi(x,\xi)$ satisfying the Carleson--Sj\"olin conditions can be regarded as a small perturbation of the translation-invariant case. 
More precisely, through a suitable change of variables, $\phi$ can be written as
\begin{equation}\label{eq:09}
    \phi(x,\xi) = \langle x_1, \xi \rangle + x_2 h(\xi) + \mathcal{E}(x,\xi),
\end{equation}
where $h$ and $\mathcal{E}$ are smooth functions, $h$ is quadratic in $\xi$, and $\mathcal{E}$ is quadratic in both $x$ and $\xi$.\footnote{Here, “quadratic” means: for $(\mathbf a,\mathbf b)\in\mathbb{N}^2\times\mathbb{N}$,
\begin{itemize}
    \item $\partial_\xi^{\mathbf b} h(0)=0$ and $\partial_x^{\mathbf a}\partial_\xi^{\mathbf b}\mathcal{E}(x,0)=0$ for $\mathbf b\le 1$;
    \item $\partial_x^{\mathbf a}\partial_\xi^{\mathbf b}\mathcal{E}(0,\xi)=0$ for $|\mathbf a|\le 1$.
\end{itemize}

}

\begin{definition}
We say that a phase function $\phi(x,\xi)$ is in reduced form if the following conditions hold.
For $(x,\xi)\in B(0,1)\times[0,1]$, the phase $\phi$ has the form 
\begin{equation*}
    \phi(x,\xi)
    = x_1 \xi + x_2 h(\xi) + \mathcal{E}(x,\xi),
\end{equation*}
where $h$ and $\mathcal{E}$ are smooth functions. $h$ is quadratic in $\xi$, and $\mathcal{E}$ is quadratic in $(x,\xi)$. Moreover,
\begin{equation}\label{eq:uni}
    \bigl|\partial_x^{\mathbf a}\partial_\xi^{\mathbf b}\phi(x,\xi)\bigr|
    \le C_{\mathbf a,\mathbf b},
    \qquad |\mathbf a|,\mathbf b \le N_{\rm par},\ \mathbf a\in\mathbb{N}^2,\ \mathbf b\in\mathbb{N}.
\end{equation}
Here $N_{\rm par}$ is a fixed large constant.
Furthermore, $\phi$ satisfies the following:
\begin{itemize}
    \item[${\bf C_1}:$] $1/2 \le |h''(\xi)| \le 2$.
    \item[${\bf C_2}:$] Let $c_{\rm par}>0$ be a small fixed constant. We have
    \[
        \bigl|\partial_x^{\mathbf a}\partial_\xi^{\mathbf b}\mathcal{E}(x,\xi)\bigr| \le c_{\rm par},
        \quad |\mathbf a|,\mathbf b \le N_{\rm par},\ \mathbf a\in\mathbb{N}^2,\ \mathbf b\in\mathbb{N}.
    \]
\end{itemize}
\end{definition}

As in the discussion of (3.7) in \cite{GLW}, to establish \eqref{inred}, it suffices to consider the phase function in reduced form.

To derive the estimate for the narrow case, we require the following \emph{parabolic rescaling lemma}, which connects estimates across different scales. 
Such parabolic rescaling lemmas are widely used in multi-scale wave-packet analysis; 
see, for example, \cite{BHS,ILX,GLW} for similar variable-coefficient parabolic rescaling results. 
Here, we provide only an outline of the proof. For a detailed treatment, the reader is referred to the proof of inequality (3.7) in \cite{GLW}.  

We cover the interval $[0,1]$ by  $K$ open subintervals $\tau$ of length $K^{-1}$ with bounded overlap. 
Let $\{\tilde\psi_\tau\}$ be a smooth partition of unity subordinate to this cover. 
Correspondingly, define
\begin{equation*}
    f_\tau = f\,\tilde\psi_\tau, \qquad \mathcal{T}^\lambda f = \sum_\tau \mathcal{T}^\lambda f_\tau.
\end{equation*}
Recall that $T'_{\tau,v}$ are tubes in the collection $\mathfrak{T}'_\tau$. We have the following lemma.
\begin{lemma}[Parabolic rescaling lemma]\label{para lemma}
For each $T'_{\tau,v} \in \mathfrak{T}'_\tau$, we have
\begin{equation}\label{parabolic}
    \Bigg(\sum_{Q' \subset T'_{\tau,v}}\mu_{Q'}^2\,
    \|\mathcal T^\lambda f_\tau\|_{L^\infty(Q')}^2\Bigg)^{1/2}
    \lesssim \mathcal{F}(\lambda/K^2, R/K^2)\,K^{-1/2}\,\mathcal{W}'\,\|f_\tau\|_{L^2},
\end{equation}
where $\mathcal{W}'$ is defined by
\begin{equation*}
    \mathcal{W}':=
    \sup_{1\le \rho \le R_1^{1/2}}\rho^{-1/2}
    \sup_{O'\in \mathcal{O}'}
    \Bigg(\sum_{\substack{I' \in \mathcal{I}'\\ I' \subset O'}}
    \Big(\sum_{Q'\subset I'}\mu_{Q'}^2 \Big)^2\Bigg)^{1/4},
    \qquad R_1=R/K^2.
\end{equation*}
Here, the summation $\sum_{I' \in \mathcal{I'},\,I'\subset O'}$ is taken over a  subcollection of tubes in $\mathcal I'$ that have the same direction as $O'$ and form a finitely overlapping cover of $O'$.
\end{lemma}

\begin{remark}
For later use, we need a strengthened version of \eqref{parabolic}, namely
\begin{equation}\label{parabolic1}
    \Bigg(\sum_{Q' \subset T'_{\tau,v}}\mu_{Q'}^2\,
    \|\mathcal T^\lambda f_\tau\|_{L^\infty(Q')}^2\Bigg)^{1/2}
    \lesssim \mathcal{F}(\lambda/K^2, R/K^2)\,K^{-1/2}\,\mathcal{W}'\,\|f_{\tau,v}\|_{L^2},
\end{equation}
where
\[
    f_{\tau,v}
    = \big(\widehat{f_\tau}\,\tilde\eta_v\big)^{\vee},
\]
and $\tilde\eta_v$ is a Schwartz function essentially supported on an interval of length $R/K$ centered at $v$. Since we are considering $\mathcal{T}^\lambda f_\tau$ on the tube $T'_{\tau,v}$, it follows that 
\begin{equation}
    \mathcal{T}^\lambda f_\tau\big|_{T'_{\tau,v}}
    = \mathcal{T}^\lambda f_{\tau,v} + \mathrm{RapDec}(R)\,\|f\|_{L^2}.
\end{equation}
\end{remark}

\begin{proof}[Outline of proof for Lemma \ref{para lemma}.]
First, perform the change of variables 
\begin{equation*}
   (x_1,x_2)\mapsto \big(\gamma_\tau^\lambda(x_1,x_2),\,x_2\big)
\end{equation*}
so that
\begin{equation*}
    \partial_\xi \phi^\lambda\big(\gamma_\tau^\lambda(x_1,x_2),x_2,\xi_\tau\big)=x_1.
\end{equation*}
Under the new coordinates, $T_{\tau,v}'$ becomes a rectangle of dimensions $R/K\times R$ with sides parallel to the coordinate axes. Similarly, the squares $Q'$ become rectangles of dimensions $K K_1^2\times K^2 K_1^2$. Now consider the phase 
\begin{equation}
\phi^\lambda\big(\gamma_\tau^\lambda(x_1,x_2),x_2,\xi\big).
\end{equation}
Make the change of variables in frequency
\begin{equation*}
    \xi\mapsto K^{-1}\xi,
\end{equation*}
and correspondingly in space
\begin{equation*}
    x_1 \mapsto K x_1,\qquad x_2\mapsto K^2 x_2.
\end{equation*}

In the end, we reduce to a new phase $\tilde{\phi}^{\tilde{\lambda}}$ with $\tilde{\lambda}=\lambda/K^2$ such that
\begin{equation}\label{eq newph}
\phi^\lambda\big(\gamma_\tau^\lambda(Kx_1,K^2x_2),\,K^2x_2,\,K^{-1}\xi\big)
= \tilde{\phi}^{\tilde{\lambda}}(x_1,x_2,\xi).
\end{equation}
Here we set
\[
\tilde{\phi}(x_1,x_2,\xi):=K^2\,\phi\big(\gamma_\tau(K^{-1}x_1,x_2),\,x_2,\,K^{-1}\xi\big),
\]
so that $\tilde{\phi}^{\tilde{\lambda}}$ denotes the $\tilde{\lambda}$-rescaling of $\tilde{\phi}$ in the same sense that $\phi^\lambda$ denotes the $\lambda$-rescaling of $\phi$. 
By a standard change of variables (and a Taylor expansion), the new phase $\tilde{\phi}$ can be written in reduced form as 
\[
x_1\xi+x_2\,\tilde{h}(\xi)+\tilde{\mathcal{E}}(x,\xi),
\]
for some $\tilde{h},\tilde{\mathcal{E}}$ satisfying ${\bf C_1}$ and ${\bf C_2}$ with constants uniform in $\lambda$ and $K$. In the new coordinates, $T'_{\tau,v}$ and $Q'$ become squares of side length $R/K^2$ and $K_1^2$, respectively.

Finally, by the definition of $\mathcal{F}(\lambda/K^2, R/K^2)$, we obtain
\begin{equation*}
    \Bigg(\sum_{Q' \subset T'_{\tau,v}}\mu_{Q'}^2\,
    \|\mathcal T^\lambda f_\tau\|_{L^\infty(Q')}^2\Bigg)^{1/2}
    \lesssim \mathcal{F}(\lambda/K^2, R/K^2)\,K^{-1/2}\,\mathcal{W}'\,\|f_\tau\|_{L^2}.
\end{equation*}
Here the factor $K^{-1/2}$ appears as a consequence of the change of variables in frequency $\xi$.  
\end{proof}

\subsubsection{Bourgain--Guth argument}
Now we are ready to prove \eqref{inred}.

\begin{definition}
Given a square $Q\in\mathcal{Q}$ of side length $K^2$, define its \textbf{significant set} to be the set of length-$K^{-1}$ subintervals of $[0,1]$ given by
\[
\mathcal{S}(Q):=\left\{\tau\subset[0,1]\text{ with }|\tau|=K^{-1}\colon
\bigl\|\mathcal{T}^\lambda f_\tau\bigr\|_{L^\infty(Q)}\ge \frac{\bigl\|\mathcal{T}^\lambda f\bigr\|_{L^\infty(Q)}}{100K}
\right\}.
\]
We say that a $K^2$-square $Q$ is \textbf{broad} if there exist $\tau_1, \tau_2 \in \mathcal{S}(Q)$ such that
\begin{equation*}
    {\rm dist}(\tau_1,\tau_2) \ge 1000 K^{-1},
\end{equation*}
and \textbf{narrow} otherwise.
\end{definition}

It is straightforward to verify that, if $Q$ is narrow, then there exists some $\tau$ such that
\[
    \|\mathcal{T}^\lambda f_\tau\|_{L^\infty(Q)}
    \ge \frac{1}{10000}\,\|\mathcal{T}^\lambda f\|_{L^\infty(Q)}.
\]
In contrast, if $Q$ is broad, then there exist two significant caps $\tau_1,\tau_2$ such that the corresponding wave packets associated to $\tau_1$ and $\tau_2$ both pass through $Q$, and the angle between them is $\ge 1000 K^{-1}$. See Figure \ref{bn} for a comparison of narrow and broad squares $Q$.

Correspondingly, we decompose $B_R$ into squares $Q$ of side length $K^2$ with bounded overlap and define
\begin{equation*}
    \mathbf{B} := \bigcup_{\substack{Q\ \text{broad}}} Q, 
    \qquad 
    \mathbf{N} := \bigcup_{\substack{Q\ \text{narrow}}} Q.
\end{equation*}
Note that
\begin{equation*}
    \Bigg( \sum_{Q \subset B_R} \mu_Q^2 \,\|\mathcal{T}^\lambda f\|_{L^\infty(Q)}^2 \Bigg)^{1/2} 
    \le 
    \Bigg( \sum_{Q \subset \mathbf{B}} \mu_Q^2 \,\|\mathcal{T}^\lambda f\|_{L^\infty(Q)}^2 \Bigg)^{1/2}
    +
    \Bigg( \sum_{Q \subset \mathbf{N}} \mu_Q^2 \,\|\mathcal{T}^\lambda f\|_{L^\infty(Q)}^2 \Bigg)^{1/2}.
\end{equation*}
We apply the bilinear estimate \eqref{multi} to the broad part, and an induction-on-scales argument to the narrow part.

\begin{figure}
    \centering
    \begin{tikzpicture}[scale=1.2, line cap=round, line join=round]

\def\tubeh{0.28}   
\def\tubelen{2.2}  
\def\sqr{0.20}

\coordinate (Ncenter) at (-3,0);

\foreach \ang in {-6,0,6}{
  \begin{scope}[rotate around={\ang:(Ncenter)}]
    \draw[thick] 
      ($(Ncenter)+(-\tubelen,-\tubeh)$) 
      rectangle 
      ($(Ncenter)+(\tubelen,\tubeh)$);
  \end{scope}
}

\draw[thick,fill=cyan!30] 
  ($(Ncenter)+(-\sqr,-\sqr)$) rectangle ($(Ncenter)+(\sqr,\sqr)$);

\draw[dashed] (Ncenter) -- ++(-8:2.2);
\draw[dashed] (Ncenter) -- ++( 8:2.2);

\draw[->, shorten >=1pt] 
  (Ncenter) ++(-8:1.9) arc[start angle=-8, end angle=8, radius=1.9];

\path (Ncenter) ++(0:3.0) node {$\lesssim K^{-1}$};

\node[below] at ($(Ncenter)+(0,-2.0)$) {\small Narrow};

\coordinate (Bcenter) at (3,0);

\foreach \ang in {-30,30}{
  \begin{scope}[rotate around={\ang:(Bcenter)}]
    \draw[thick] 
      ($(Bcenter)+(-\tubelen,-\tubeh)$) 
      rectangle 
      ($(Bcenter)+(\tubelen,\tubeh)$);
  \end{scope}
}

\draw[thick,fill=orange!30] 
  ($(Bcenter)+(-\sqr,-\sqr)$) rectangle ($(Bcenter)+(\sqr,\sqr)$);

\draw[dashed] (Bcenter) -- ++(-30:1.8);
\draw[dashed] (Bcenter) -- ++( 30:1.8);

\draw[->, shorten >=1pt] 
  (Bcenter) ++(-30:1.5) arc[start angle=-30, end angle=30, radius=1.5];

\path (Bcenter) ++(0:2.4) node {$\gtrsim K^{-1}$};

\node[below] at ($(Bcenter)+(0,-2.0)$) {\small Broad};

\end{tikzpicture}

\caption{
Left (narrow): all wave-packets that contribute significantly point within an angular aperture $\lesssim K^{-1}$.
Right (broad): there exist two significantly contributing wave-packets whose directions are separated by $\gtrsim K^{-1}$. 
}

\label{bn}
\end{figure}

\medskip
{\noindent \bf Broad case.} First, we bound the Broad term.
Note that if $Q$ is broad, then there exist two intervals $\tau_1,\tau_2\in\mathcal S(Q)$ with 
$${\rm dist}(\tau_1,\tau_2)\geq 1000 K^{-1}.$$
Thus by the locally constant property,   we have 
\begin{align}\label{locala}
    \|\mathcal T^\lambda f\|_{L^\infty(Q)}\lesssim K^{C} \sum_{{\rm dist}(\tau_1,\tau_2 )\geq 1000 K^{-1}}\big\||\mathcal T^\lambda f_{\tau_1} \mathcal T^\lambda f_{\tau_2}|^{\frac{1}{2}}\big\|_{L^\infty(Q)}.
\end{align}
Applying the bilinear estimate 
Theorem \ref{multit} with $\nu=1000K^{-1}$ and $\varepsilon_1=\varepsilon/2$,  we obtain
\begin{align*}
    ( \sum_{Q\subset \mathbf B} \mu_Q^2 \|\mathcal T^\lambda f\|_{L^\infty(Q)}^2)^{1/2}
    &\lesssim K^C ( \sum_{Q\subset \mathbf B} \mu_Q^4 )^{1/4}(\sum_{Q\subset \mathbf B}\|\mathcal T^\lambda f\|_{L^\infty(Q)}^4)^{1/4}\\
    &\lesssim K^{C} (\sum_{Q\subset \mathbf B} \mu_Q^4)^{1/4}
  \!\!\! \!\!\!\!\!\!\sum_{{\rm dist}(\tau_1,\tau_2)>1000K^{-1}}(\sum_{Q\subset \mathbf B}\||\mathcal T^\lambda f_{\tau_1}\mathcal T^\lambda f_{\tau_2}|^{1/2}\|_{L^\infty(Q)}^4)^{1/4}\\
    &\lesssim K^C(\sum_{Q\subset \mathbf B}\mu_Q^4)^{1/4}
    \!\!\! \!\!\!\!\!\!\sum_{{\rm dist}(\tau_1,\tau_2)>1000K^{-1}} \||\mathcal T^\lambda f_{\tau_1}\mathcal T^\lambda f_{\tau_2}|^{1/2}\|_{L^4(B_R)}\\
    &\le C_{\varepsilon} K^C R^{\varepsilon/2} (\sum_{Q\subset \mathbf B} \mu_Q^4)^{1/4}\|f\|_{L^2}.
\end{align*}

In the second-to-last inequality, we bound each $L^\infty(Q)$-norm by its $L^4(Q)$-norm using the locally constant property (Lemma \ref{localc}), and then sum over $Q\subset \mathbf{B}$. The resulting loss is absorbed into the $K^{C}$ factor. Note that, choosing $\rho=1$ in the definition of $\mathcal W$, we obtain
\[
   \Big(\sum_{Q\subset \mathbf B} \mu_Q^4\Big)^{1/4}\le \mathcal{W}.
\]
Therefore
\begin{equation}\label{eq B}
    \Bigg( \sum_{Q \subset \mathbf{B}} \mu_Q^2
       \|\mathcal{T}^\lambda f\|_{L^\infty(Q)}^2
\Bigg)^{1/2}
\le C_\varepsilon K^{C} R^{\varepsilon/2} \, \mathcal W \, \|f\|_{L^2}.
\end{equation}

\medskip
\noindent\textbf{Narrow case.}
Now we estimate the narrow term. Note that if a square $Q$ is narrow, then there exists some $\tau$ such that 
\begin{equation*}
    10000\,\|\mathcal T^\lambda f_\tau\|_{L^\infty(Q)} \ge \|\mathcal T^\lambda f\|_{L^\infty(Q)}.
\end{equation*}
Thus, for every narrow square $Q$, we have
\begin{equation*}
    \|\mathcal T^\lambda f\|_{L^\infty(Q)} \lesssim \Big( \sum_\tau \|\mathcal T^\lambda f_\tau\|_{L^\infty(Q)}^2 \Big)^{1/2}.
\end{equation*}
Therefore,
\begin{equation}\label{narrow}
   \Big( \sum_{Q\subset \mathbf N} \mu_Q^2 \|\mathcal T^\lambda f\|_{L^\infty(Q)}^2 \Big)^{1/2}
   \lesssim 
   \Big( \sum_{\tau} \sum_{Q\subset \mathbf N} \mu_Q^2 \|\mathcal T^\lambda f_\tau\|_{L^\infty(Q)}^2 \Big)^{1/2}.
\end{equation}
We decompose $B_R$ into a collection of finitely overlapping curved tubes $T'_{\tau,v}$ such that
\begin{equation*}
    B_R \subset \bigcup_{v} T'_{\tau,v}.
\end{equation*}
It follows that 
\begin{align*}
 \Big( \sum_{Q\subset \mathbf N} \mu_Q^2 \|\mathcal T^\lambda f\|_{L^\infty(Q)}^2 \Big)^{1/2}
 &\lesssim 
 \Big( \sum_{\tau} \sum_{Q\subset B_R} \mu_Q^2 \|\mathcal T^\lambda f_\tau\|_{L^\infty(Q)}^2 \Big)^{1/2} \\
 &\lesssim 
 \Big( \sum_{\tau} \sum_{T'_{\tau,v}\subset B_R} 
 \sum_{Q\subset T'_{\tau,v}} \mu_Q^2 \|\mathcal T^\lambda f_\tau\|_{L^\infty(Q)}^2 \Big)^{1/2}.
\end{align*}
Noting that
\begin{align*}
  \sum_{Q\subset T'_{\tau,v}} \mu_Q^2 \|\mathcal T^\lambda f_\tau\|_{L^\infty(Q)}^2
  &\lesssim 
  \sum_{Q'\subset T'_{\tau,v}} \sum_{Q\subset Q'} \mu_Q^2 \|\mathcal T^\lambda f_\tau\|_{L^\infty(Q)}^2,
\end{align*}
we define
\begin{align}\label{muq}
    \mu_{Q'}^2 := \sum_{Q\subset Q'} \mu_Q^2,
\end{align}
and observe that
\begin{equation*}
    \sum_{Q\subset Q'} \mu_Q^2 \|\mathcal T^\lambda f_\tau\|_{L^\infty(Q)}^2 
    \lesssim  
    \mu_{Q'}^2 \|\mathcal T^\lambda f_\tau\|_{L^\infty(Q')}^2.
\end{equation*}
Hence,
\begin{equation*}
    \sum_{Q\subset T'_{\tau,v}} \mu_Q^2 \|\mathcal T^\lambda f_\tau\|_{L^\infty(Q)}^2
    \lesssim 
    \sum_{Q'\subset T'_{\tau,v}} \mu_{Q'}^2 \|\mathcal T^\lambda f_\tau\|_{L^\infty(Q')}^2 .
\end{equation*}
Applying \eqref{parabolic1} to the right-hand side yields
\begin{equation}
\Big( \sum_{Q\subset T'_{\tau,v}} \mu_{Q}^2
\|\mathcal T^\lambda f_\tau\|_{L^\infty(Q)}^2 \Big)^{1/2}
\lesssim   
\mathcal{F}(\lambda/K^2, R/K^2)\, K^{-1/2}\, \mathcal{W}' \|f_{\tau,v}\|_{L^2}.
\end{equation}
We claim that
\[
    K^{-1/2}\,\mathcal W' \lesssim \mathcal W.
\]
Indeed, by the definitions of $\mathcal W'$ and \eqref{muq}, we have
\[
    \mathcal W'
    = \sup_{1\le \rho \le R_1^{1/2}} \rho^{-1/2}
    \sup_{O'\in \mathcal O'}
    \Bigg(
        \sum_{\substack{I' \in \mathcal I' \\ I' \subset O'}}
        \Big( \sum_{Q \subset I'} \mu_{Q}^2 \Big)^{2}
    \Bigg)^{1/4},
    \qquad R_1 = R/K^2.
\]
Each $O' \in \mathcal O'$ at scale $\rho$ corresponds to some 
$O \in \mathcal O$ at scale $K\rho$, and similarly 
$I' \subset O'$ corresponds to $I \subset O$. 
Thus, the factor $\rho^{-1/2}$ in $\mathcal W'$ becomes 
$(K\rho)^{-1/2}$ when viewed in $\mathcal W$, 
which produces the extra $K^{-1/2}$ and confirms the claim.

Summing over $\tau$ and $T'_{\tau,v}$ and using almost orthogonality, we obtain
\begin{equation}\label{eq N}
    \Bigg( \sum_{Q \subset \mathbf{N}} \mu_Q^2 \,\|\mathcal{T}^\lambda f\|_{L^\infty(Q)}^2 \Bigg)^{1/2}
\lesssim \,
      \mathcal F(\lambda/K^2, R/K^2)\, \mathcal W \,\|f\|_{L^2}.
\end{equation}

\medskip
\noindent\textbf{Closing the induction.} By combining the contributions from \eqref{eq B} (broad) and \eqref{eq N} (narrow), we obtain the recursive bound
\begin{equation}\label{eq:recursiveF}
\mathcal F(\lambda,R) \le C_\varepsilon K^{C} R^{\varepsilon/2} + C\, \mathcal F(\lambda/K^2, R/K^2).
\end{equation}
Here $K=R^\kappa$ with $\kappa=\varepsilon^2$, and $C$ is an absolute constant. Applying the induction hypothesis \eqref{assumption} at the smaller scale,
\[
\mathcal F(\lambda/K^2, R/K^2) \le \bar C_\varepsilon (R/K^2)^\varepsilon,
\]
turns \eqref{eq:recursiveF} into
\[
\mathcal F(\lambda,R) \le C_\varepsilon K^{C} R^{\varepsilon/2} + C \bar C_\varepsilon (R/K^2)^{\varepsilon}.
\]
Hence,
\[
\mathcal F(\lambda,R) \le \big( C_\varepsilon K^{C} R^{-\varepsilon/2} + C \bar C_\varepsilon K^{-2\varepsilon} \big) R^{\varepsilon}.
\]
Since $K=R^{\kappa}$ with $\kappa=\varepsilon^2$ and $R\ge N_\varepsilon$, choosing $N_\varepsilon$ sufficiently large yields
\[
C_\varepsilon K^{C} R^{-\varepsilon/2} + C \bar C_\varepsilon K^{-2\varepsilon} \le \bar C_\varepsilon.
\]
This closes the induction and completes the proof of Propositions \ref{prop muQ} and \ref{prop 1,2}.

\subsection{The case $ 0 < \alpha < \frac{1}{2} $}

In this subsection, we prove Proposition \ref{theo11} in the range $ 0 < \alpha < \frac{1}{2} $. Indeed, we show that for every $\varepsilon>0$, there exists $C_\varepsilon>0$ such that 
\begin{equation}\label{eql2e}
    \|\mathcal{T}^\lambda f\|_{L^q(B(0,R);\,H \, dx)} \le C_\varepsilon A_\alpha(H)^\frac1q R^\varepsilon  \|f\|_{L^2}, \quad q\geq  2 .
\end{equation}
For convenience, set $ d\omega := H(x) \, dx $. Since, by the definition of $\mathcal T^\lambda$,
\begin{equation*}
    \|\mathcal{T}^\lambda f\|_{L^\infty(d\omega)} \leq C \|f\|_{L^2},
\end{equation*}
it follows that
\begin{equation}\label{eq:dis}
    \|\mathcal{T}^\lambda f\|_{L^q(d\omega)}^q = q \int_0^{C \|f\|_{L^2}} t^{q-1} \,\omega\{ x : |\mathcal{T}^\lambda f(x)| > t \} \, dt .
\end{equation}
Note that the set $ \{x : |\mathcal{T}^\lambda f(x)| > t\} $ is contained in
\begin{equation*}
    \{ ({\rm Re}\, \mathcal{T}^\lambda f)_+ > \tfrac{t}{4} \} \cup \{ ({\rm Re}\, \mathcal{T}^\lambda f)_- > \tfrac{t}{4} \} \cup \{ ({\rm Im}\, \mathcal{T}^\lambda f)_+ > \tfrac{t}{4} \} \cup \{ ({\rm Im}\, \mathcal{T}^\lambda f)_- > \tfrac{t}{4} \} .
\end{equation*}
We will consider the first set as an example, since the others can be handled in the same manner. Let
\begin{equation}
    G := \{ ({\rm Re}\, \mathcal{T}^\lambda f)_+ > \tfrac{t}{4} \} .
\end{equation}
Thus,
\begin{equation}\label{eq G}
    \frac{t}{4} \, \omega(G) \leq \int_G ({\rm Re}\, \mathcal{T}^\lambda f) \, d\omega \leq \left| \int \mathcal{T}^\lambda f(x) \chi_G(x) \, d\omega(x) \right|,
\end{equation}
where $ \chi_G $ denotes the characteristic function of $ G $. It follows that
\begin{multline}\label{eq:bound_integral}
    \left| \int \mathcal{T}^\lambda f(x) \chi_G(x) \, d\omega(x) \right|
    = \left| \int f(\xi) \int e^{i \phi^\lambda(x,\xi)} a^\lambda(x,\xi) \chi_G(x) \, d\omega(x) \, d\xi \right| \\
    \leq \left\| \int e^{i \phi^\lambda(x,\xi)} a^\lambda(x,\xi) \chi_G(x) \, d\omega(x) \right\|_{L^2} \|f\|_{L^2}.
\end{multline}
It suffices to consider
\begin{equation}\label{eq:dual}
    \left\| \int e^{i \phi^\lambda(x,\xi)} a^\lambda(x,\xi) \chi_G(x) \, d\omega(x) \right\|_{L^2}^2 .
\end{equation}
Expanding \eqref{eq:dual}, we obtain
\begin{equation}\label{eq:expan}
    \int \int \int e^{i (\phi^\lambda(x,\xi) - \phi^\lambda(y,\xi))} a^\lambda(x,\xi) a^\lambda(y,\xi) \, d\xi \, \chi_G(x) \, d\omega(x) \, \chi_G(y) \, d\omega(y) .
\end{equation}
By a standard stationary phase argument (see, e.g., \cite[Chap.\ 5]{soggeFIO}),
\begin{equation}\label{kernel bd}
    \left| \int e^{i (\phi^\lambda(x,\xi) - \phi^\lambda(y,\xi))} a^\lambda(x,\xi) a^\lambda(y,\xi) \, d\xi \right| \lesssim \frac{1}{(1 + |x - y|)^{1/2}} .
\end{equation}
Therefore, \eqref{eq:expan} is controlled by
\begin{equation*}
    \int \int \frac{1}{(1 + |x - y|)^{1/2}} \chi_G(x) \, d\omega(x) \, \chi_G(y) \, d\omega(y) .
\end{equation*}
We claim that, uniformly in $y$,
\begin{equation}\label{claima}
    \int \frac{1}{(1 + |x - y|)^{1/2}} \chi_G(x) \, d\omega(x) \lesssim  A_\alpha(H) .
\end{equation}
Combining \eqref{eq G}–\eqref{kernel bd} with the claim \eqref{claima}, we obtain
\begin{equation}
    t^2 \omega(G)^2 \lesssim  A_\alpha(H) \|f\|_{L^2}^2 \omega(G),
\end{equation}
which implies
\begin{equation*}
    \|\mathcal{T}^\lambda f\|_{L^q(d\omega)}^q \lesssim  A_\alpha(H) \|f\|_{L^2}^2 \int_0^{C \|f\|_{L^2}} t^{q - 3} \, dt \lesssim A_\alpha(H) \|f\|_{L^2}^q ,
\end{equation*}
for $q>2$. The case $q=2$ also follows since we allow an $R^\varepsilon$ loss. It remains to prove our claim \eqref{claima}.

By our assumption on $H$, for each ball $ B(x,r) $ with $ r \geq 1 $,
\begin{equation}\label{grow}
    \omega(B(x,r)) \leq  A_\alpha(H)\, r^\alpha .
\end{equation}
Using \eqref{grow}, we obtain
\begin{equation}
    \begin{aligned}
        \int \frac{1}{(1 + |x - y|)^{1/2}} \chi_G(x) \, d\omega(x)
        &\leq \int_{B(y,1)} \frac{1}{(1 + |x - y|)^{1/2}} \chi_G(x) \, d\omega(x) \\
        &\quad + \sum_{k \in \mathbb{Z}_+} \int_{B(y,2^k) \setminus B(y,2^{k-1})} \frac{1}{(1 + |x - y|)^{1/2}} \chi_G(x) \, d\omega(x) .
    \end{aligned}
\end{equation}
The first term is bounded by a constant multiple of $A_\alpha(H)$. For each $ k \in \mathbb{Z}_+ $,
\begin{equation*}
    \int_{B(y,2^k) \setminus B(y,2^{k-1})} \frac{1}{(1 + |x - y|)^{1/2}} \chi_G(x) \, d\omega(x)
    \lesssim  2^{-k/2} \,\omega(B(y,2^k)) \lesssim A_\alpha(H)\, 2^{k (\alpha - 1/2)} .
\end{equation*}
Since $ 0 < \alpha < \tfrac{1}{2} $, the series $ \sum_{k=1}^\infty 2^{k (\alpha - 1/2)} $ converges. Therefore, we obtain \eqref{claima}  and complete  the proof of \eqref{eql2e} in this range of $\alpha$.

\subsection{The case $\frac{1}{2}\leq\alpha<1$}
Finally, we prove Proposition \ref{theo11} for $\frac{1}{2}\leq\alpha<1$. This case is a direct consequence of Proposition \ref{theo22}, via H\"older's inequality. Indeed,
\begin{equation}
   \begin{aligned}
\int_{B(0,R)} |\mathcal T^\lambda f(x)|^2 H(x)\,dx
&\leq \Big(\int_{B(0,R)} |\mathcal T^\lambda f(x)|^{4\alpha} H(x)\,dx \Big)^{\frac{1}{2\alpha}}
\Big(\int_{B(0,R)} H(x)\,dx\Big)^{1-\frac{1}{2\alpha}}\\
&\lesssim \big(R^\varepsilon \|f\|_{L^2}^{4\alpha}\big)^{\frac{1}{2\alpha}}\, R^{\alpha-\frac{1}{2}}\\
&\lesssim R^{\alpha-\frac{1}{2}+\varepsilon}\,\|f\|_{L^2}^2.
\end{aligned}
 \end{equation}

\section{Proof of Proposition \ref{theo22}}\label{sec 4}
The proof is analogous to that of Theorem 2.1 in \cite{shayya1}, except that our estimates also extend to the endpoint since we allow $\lambda^\varepsilon$ losses. 
For any Lebesgue measurable function $F:\mathbb{R}^2\to[0,\infty)$, define
\begin{equation*}
    M_\alpha F:=\Bigg(\sup_H \frac{1}{A_\alpha(H)}\int F(x)^\alpha H(x)\,dx\Bigg)^{1/\alpha},
\end{equation*}
where the supremum is taken over all nonzero weights $H$ on $\mathbb{R}^2$ of dimension $\alpha$. The following H\"older-type inequality is a key ingredient in the proof of Proposition \ref{theo22}.

\begin{theorem}[\cite{shayya1}]\label{theo33}
Suppose $n\ge 1$ and $0<\beta<\alpha\le n$. Then
\begin{equation*}
    M_\alpha F\le M_\beta F,
\end{equation*}
for all nonnegative Lebesgue measurable functions $F$ on $\mathbb{R}^n$.
\end{theorem}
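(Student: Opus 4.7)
The plan is to prove this via a direct Hölder argument that builds a $\beta$-dimensional weight out of the $\alpha$-dimensional weight $H$ using the function $F$ itself as a density factor. The essential idea is that once we tilt $H$ by the extra power $F^{\alpha-\beta}$, Hölder's inequality shows the new weight remains fractal of dimension $\beta$, with constant controlled by the quantity we are trying to bound.

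More precisely, I would proceed as follows. Fix an $\alpha$-dimensional weight $H$ with $A_\alpha(H)<\infty$, and by a standard truncation ($F$ replaced by $\min(F,N)\chi_{B(0,N)}$ with $N\to\infty$) assume that $M:=\int F^{\alpha}H\,dx$ is finite; if $M=0$ there is nothing to prove. Define the auxiliary weight
\[
H'(x):=F(x)^{\alpha-\beta}\,H(x),
\]
so that $\int F^{\beta}H'\,dx=\int F^{\alpha}H\,dx=M$. The key step is to check that $H'$ is a $\beta$-dimensional weight whose constant is controlled. For any $x_0\in\mathbb{R}^n$ and $R\ge 1$, apply Hölder's inequality with exponents $\alpha/(\alpha-\beta)$ and $\alpha/\beta$, splitting $F^{\alpha-\beta}H=(F^{\alpha}H)^{(\alpha-\beta)/\alpha}\cdot H^{\beta/\alpha}$:
\[
\int_{B(x_0,R)}H'\,dx
\le\Bigl(\int_{B(x_0,R)}F^{\alpha}H\,dx\Bigr)^{\!(\alpha-\beta)/\alpha}\Bigl(\int_{B(x_0,R)}H\,dx\Bigr)^{\!\beta/\alpha}
\le M^{(\alpha-\beta)/\alpha}\,A_\alpha(H)^{\beta/\alpha}\,R^{\beta},
\]
using in the last step that $H$ is $\alpha$-dimensional. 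Hence $A_\beta(H')\le M^{(\alpha-\beta)/\alpha}A_\alpha(H)^{\beta/\alpha}$.

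Now I would feed $H'$ into the definition of $M_\beta F$. Since $H'$ is $\beta$-dimensional,
\[
M=\int F^{\beta}H'\,dx\le A_\beta(H')\,(M_\beta F)^{\beta}\le M^{(\alpha-\beta)/\alpha}A_\alpha(H)^{\beta/\alpha}(M_\beta F)^{\beta}.
\]
Dividing through by $M^{(\alpha-\beta)/\alpha}$ (finite by our truncation) and raising to the power $1/\beta$ gives
\[
\Bigl(\frac{M}{A_\alpha(H)}\Bigr)^{\!1/\alpha}\le M_\beta F.
\]
Taking the supremum over all nonzero $\alpha$-dimensional weights $H$ yields $M_\alpha F\le M_\beta F$, and finally the truncation can be removed by monotone convergence.

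The main obstacle, as I see it, is purely technical: verifying that the quantity $M$ may be assumed finite (so that the division step is legitimate) and that the manipulation survives the limit $N\to\infty$. The genuinely clever step, choosing $H'=F^{\alpha-\beta}H$, is forced by dimensional balance, and once one notices it the rest is a routine application of Hölder and the definition of the $\alpha$-dimensional constant. No use is made of the ambient dimension $n$ or of any structure of $F$ beyond measurability and nonnegativity, so the argument applies verbatim in any $\mathbb{R}^n$ with $0<\beta<\alpha\le n$.
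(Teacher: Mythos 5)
The paper quotes this result from Shayya~\cite{shayya1} without reproducing a proof, so there is no in-paper argument to compare against; your proof is correct and, to the best of my knowledge, is essentially the argument in Shayya's paper. The tilted-weight construction $H'=F^{\alpha-\beta}H$, the H\"older step with exponents $\alpha/(\alpha-\beta)$ and $\alpha/\beta$ giving $A_\beta(H')\le M^{(\alpha-\beta)/\alpha}A_\alpha(H)^{\beta/\alpha}$, and the final rearrangement after feeding $H'$ into the definition of $M_\beta F$ all match the standard argument, and your truncation/monotone-convergence step correctly legitimizes the division by $M^{(\alpha-\beta)/\alpha}$ and the removal of the finiteness assumption.
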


We define $\mathfrak Q(\alpha,p)$ to be the infimum of all exponents $q$ satisfying the following property: for every $\varepsilon>0$, there exists $C_\varepsilon>0$ such that
\begin{equation}\label{qpestimate}
    \int_{B(0,R)} |\mathcal{T}^\lambda f(x)|^q H(x)\,dx 
    \le C_\varepsilon\,A_\alpha(H)\,R^{\varepsilon}\,\|f\|_{L^p}^q .
\end{equation}
Equivalently,
\begin{equation*}
    \mathfrak Q(\alpha,p)
    := \inf\Big\{ q : \forall\,\varepsilon>0 ,\exists\,C_\varepsilon>0 \text{ such that \eqref{qpestimate} holds}\Big\}.
\end{equation*}
We first establish a monotonicity property of $\mathfrak Q(\alpha,p)$ with respect to $\alpha$: for $0<\beta<\alpha\le 2$,
\begin{equation}\label{rela}
    \frac{\mathfrak Q(\alpha,p)}{\alpha}\le \frac{\mathfrak Q(\beta,p)}{\beta}.
\end{equation}
Indeed, fix $\varepsilon>0$ and suppose $q>\mathfrak Q(\beta,p)$. By the definition of $\mathfrak Q(\beta,p)$, there exists $C_\varepsilon>0$ such that
\begin{equation*}
    \int_{B(0,R)} |\mathcal T^\lambda f(x)|^q H(x)\,dx
    \le C_\varepsilon\,R^\varepsilon\,A_\beta(H)\,\|f\|_{L^p}^q ,
\end{equation*}
for all $f\in L^p$ and all weights $H$ of dimension $\beta$.
Set 
\[
    F = \chi_{B(0,R)}\,|\mathcal T^\lambda f|^{q/\beta}.
\]
Then it follows that
\begin{equation*}
    M_\beta(F)\le (C_\varepsilon R^\varepsilon \|f\|_{L^p}^q)^{1/\beta}.
\end{equation*}
Applying Theorem \ref{theo33}, we obtain
\begin{equation*}
    M_\alpha(F)\le (C_\varepsilon R^\varepsilon \|f\|_{L^p}^q)^{1/\beta}.
\end{equation*}
Hence,
\begin{equation*}
    \bigg(\frac{1}{A_\alpha(H)}\int_{B(0,R)} 
    |\mathcal T^\lambda f(x)|^{\frac{\alpha q}{\beta}} H(x)\,dx\bigg)^{1/\alpha}
    \le (C_\varepsilon R^\varepsilon \|f\|_{L^p}^q)^{1/\beta},
\end{equation*}
for all $f\in L^p$ and all weights $H$ of dimension $\alpha$.  
By the definition of $\mathfrak Q(\alpha,p)$, this implies
\[
    \frac{\alpha q}{\beta}\ge \mathfrak Q(\alpha,p).
\]
Since this holds for all $q>\mathfrak Q(\beta,p)$, we conclude that
\[
    \frac{\mathfrak Q(\beta,p)}{\beta}\ge \frac{\mathfrak Q(\alpha,p)}{\alpha},
\]
which proves \eqref{rela}.

\medskip
Now we are ready to prove Proposition \ref{theo22}. The case $0<\alpha\le \tfrac{1}{2}$ was handled in Proposition \ref{theo11}. See the proof of \eqref{eql2e}. 
As a direct consequence of \eqref{rela}, for $\tfrac{1}{2}<\alpha\le 1$ and $q>4\alpha$, we have
\begin{equation}
    \int_{B(0,R)} |\mathcal T^\lambda f(x)|^q H(x)\,dx
    \le C_\varepsilon\, R^\varepsilon\, A_\alpha(H)\, \|f\|_{L^2}^q .
\end{equation}
Indeed, by \eqref{eql2e} we have
\[
\frac{\mathfrak Q(\beta,2)}{\beta}=\frac{2}{\beta},\qquad 0<\beta<\tfrac12.
\]
Thus, by \eqref{rela},
\begin{equation*}
    \frac{\mathfrak Q(\alpha,2)}{\alpha}\le \inf_{\beta\in(0,\tfrac12)}\frac{2}{\beta}=4,
\end{equation*}
and the desired bound follows.

Finally, we consider the case $1<\alpha\le 2$.  
By Proposition \ref{prop 1,2}, we have
\begin{equation*}
    \int_{B(0,R)} |\mathcal T^\lambda f(x)|^2 H(x)\,dx
    \le C_\varepsilon\, R^\varepsilon\, A_\alpha(H)\, R^{\tfrac{\alpha}{2}} \|f\|_{L^2}^2.
\end{equation*}
Define
\begin{equation*}
    \bar H(x):=\|f\|_{L^2}^{-2}\,|\mathcal T^\lambda f(x)|^2\,H(x).
\end{equation*}
Then $\bar H$ is a weight of dimension $\bar\alpha:=\tfrac{\alpha}{2}+\varepsilon$, and moreover
\begin{equation*}
    A_{\bar\alpha}(\bar H)\lesssim A_\alpha(H).
\end{equation*}
Since $1<\alpha<2$, we have $1/2<\bar\alpha<1$ provided $\varepsilon>0$ is sufficiently small.  
Thus, by the result for the range $1/2<\bar\alpha<1$, 
\begin{equation*}
    \mathfrak Q(\bar\alpha,2)\le 4\bar\alpha.
\end{equation*}
Consequently, for all $g\in L^2$ and all $q'>4\bar\alpha=2\alpha+4\varepsilon$,
\begin{equation*}
    \int_{B(0,R)} |\mathcal T^\lambda g(x)|^{q'} \bar H(x)\,dx
    \le C_\varepsilon\, R^\varepsilon\, A_{\bar\alpha}(\bar H)\, \|g\|_{L^2}^{q'}.
\end{equation*}
Replacing $\bar H$ by $\|f\|_{L^2}^{-2}|\mathcal T^\lambda f|^2 H$, setting $g=f$, and choosing $\varepsilon>0$ sufficiently small, we obtain
\begin{equation*}
    \int_{B(0,R)} |\mathcal T^\lambda f(x)|^q H(x)\,dx
    \le C_\varepsilon\, R^\varepsilon\, A_\alpha(H)\, \|f\|_{L^2}^q,
\end{equation*}
for all $q>2\alpha+2$.  
This completes the proof.

\section{Proof of Theorem \ref{theo:cur}}\label{sec 5}
In this section we prove Theorem \ref{theo:cur} and explain its connection to the Mizohata--Takeuchi conjecture. As in the proof of Theorem \ref{theomain}, Theorem \ref{theo:cur} reduces to a bound for $\mathcal T^\lambda f$. Specifically, we estimate $\mathcal T^\lambda f$ on suitably rescaled tubular neighborhoods of a curve $\gamma$ tangent to $\{\Gamma_{\xi_\theta,v}\}_{\theta,v}$ to order at most $1$. See Definition \ref{order} below. We denote by $T_{\lambda^{s}}(\gamma^\lambda)$ the $\lambda^{s}$-neighborhood of the rescaled curve $\gamma^\lambda(\cdot) := \lambda \gamma(\cdot/\lambda)$. Theorem \ref{theo:cur} is a consequence of the following proposition.

\begin{proposition}\label{theo2}
Let $\gamma$ be a curve tangent to the family $\{\Gamma_{\xi_\theta,v}\}_{\theta,v}$ to order at most $1$, and let $0 \le s \le \tfrac{1}{2}$. For every $\varepsilon>0$ there exists $C_\varepsilon>0$ such that
\[
\|\mathcal T^\lambda f\|_{L^2\left(T_{\lambda^{s}}(\gamma^\lambda)\right)} \le C_\varepsilon \lambda^{\frac{1+s}{4}+\varepsilon} \|f\|_{L^2}.
\]
\end{proposition}
 
\begin{remark}
This result is sharp for $\tfrac{1}{3} \le s \le \tfrac{1}{2}$, but not for $0 < s < \tfrac{1}{3}$. For $0 < s < \tfrac{1}{3}$, the sharp bound follows from
\begin{equation}\label{eq:curs}
\| \mathcal T^\lambda f \|_{L^2(\gamma^\lambda)} \le C \lambda^{\tfrac{1}{6}} \| f \|_{L^2}.
\end{equation}
A representative example is
\begin{equation}\label{eq:rep}
    \int e^{i\lambda(t^2 \xi + t \xi^2)} a(t,\xi) f(\xi)\, d\xi, \quad (t,\xi) \in \mathbb{R} \times \mathbb{R},
\end{equation}
where $a \in C_c^\infty([0,1] \times [0,1])$. For \eqref{eq:rep}, take
\[
\phi(x,\xi) = x_1 \xi + x_2 \xi^2,
\]
and restrict to the curve $(x_1(t),x_2(t)) = \gamma(t) = (t^2,t)$. The estimate \eqref{eq:curs} can be deduced from a more general estimate. The classical result of Phong and Stein reads \cite{PSA}:
\begin{equation}
    \left\| \int e^{i\lambda\left( \sum_{\alpha} a_\alpha t^{\alpha_1} \xi^{\alpha_2} \right)} a(t,\xi) f(\xi)\, d\xi \right\|_{L^2} \lesssim  \lambda^{-\frac{1}{2 d_{\mathrm N}}} \| f \|_{L^2}, \quad \alpha=(\alpha_1,\alpha_2) \in \mathbb{N}^2,
\end{equation}
where $d_{\mathrm N}$ is the Newton distance defined by
\begin{equation*}
    d_{\mathrm N} := \inf \{ d \in \mathbb{R} : (d,d) \in \mathrm{NP} \},
\end{equation*}
and the Newton polygon $\mathrm{NP}$ is defined as
\begin{equation*}
    \mathrm{NP} := \text{convex hull of } \left\{ \alpha + \mathbb{R}_{+}^2 : a_\alpha \neq 0 \right\}.
\end{equation*}
For \eqref{eq:rep}, we have $d_{\mathrm N}=\tfrac{3}{2}$. By a further scaling argument in $t$, we obtain \eqref{eq:curs}. 
\end{remark}

Our argument for proving Proposition \ref{theo2} combines the wave-packet decomposition from the previous sections with a simple geometric observation. We begin with a geometric lemma. In $\R^2$, the $\delta$-neighborhoods of a line and a parabola cannot overlap too much. We generalize this observation to the Riemannian setting.

\subsection{A geometric lemma}
Let $\gamma\subset \mathbb{R}^2$ be a curve segment parametrized by arc length with nonvanishing curvature, that is,
\begin{equation*}
    |\gamma''(s)|\geq c,\quad s\in (0,\varepsilon_0),
\end{equation*}
for some fixed $0<\varepsilon_0\ll 1$ and constant $c>0$.

Let $0<\delta\ll 1$, and write $\gamma_\delta$ for the $\delta$-neighborhood of $\gamma$. Similarly, for a line $\ell$, write $\ell_\delta$ for the $\delta$-neighborhood of $\ell$. Whenever $\gamma_\delta$ intersects $\ell_\delta$, one has
\begin{equation*}
    \Big|\ell_\delta\cap \gamma_\delta\Big|\le C\,\delta^{3/2},
\end{equation*}
with equality in the extremal case when $\ell$ is tangent to $\gamma$. This fact extends to Riemannian manifolds, with the line $\ell$ replaced by a geodesic and $\gamma$ a curve with nonvanishing geodesic curvature.

\begin{lemma}\label{le41}
Let $\gamma_1$ be a curve segment in the Riemannian manifold $(M,g)$ with nonvanishing geodesic curvature, and let $\gamma_2$ be a geodesic segment. Let $(\gamma_1)_\delta$ and $(\gamma_2)_\delta$ denote the $\delta$-neighborhoods of $\gamma_1$ and $\gamma_2$, respectively. Then
\begin{equation}
    \Big|(\gamma_1)_\delta\cap (\gamma_2)_\delta\Big|\lesssim \delta^{3/2}.
\end{equation}
\end{lemma}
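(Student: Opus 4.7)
The plan is to pass to Fermi coordinates along the geodesic $\tilde{\Gamma}$ and reduce the claim to a purely planar lemma about the intersection of a horizontal strip with the $\delta$-neighborhood of a curve of bounded Euclidean curvature.

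\textbf{Step 1: Fermi coordinates.} I would first introduce Fermi coordinates $(x_1,x_2)$ along $\tilde{\Gamma}$, so that $\tilde{\Gamma}=\{x_2=0\}$ and $|x_2|$ equals exactly the geodesic distance to $\tilde{\Gamma}$. The metric then takes the form $g = A(x_1,x_2)\,dx_1^2 + dx_2^2$ with $A(x_1,0)\equiv 1$ and $\partial_{x_2}A(x_1,0)\equiv 0$, so $\tilde{\Gamma}_\delta = \{|x_2|\le\delta\}$ exactly and the Riemannian volume element is comparable to $dx_1\,dx_2$ on any fixed compact patch. These coordinates are valid on a tubular neighborhood of $\tilde{\Gamma}$ of positive width, which contains $\tilde{\Gamma}_\delta$ for all $\delta$ sufficiently small.

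\textbf{Step 2: Reduction to a Euclidean statement.} The geodesic curvature of $\Gamma$ in $(M,g)$ equals its Euclidean curvature in the chart plus a correction involving the uniformly bounded Christoffel symbols. Hence, after subdividing $\Gamma$ into $O(1)$ sufficiently small arcs, I may assume that $\Gamma$ has Euclidean curvature bounded below by some $c>0$ in the chart. The assertion reduces to the planar claim: for a $C^2$ arc $\Gamma\subset\mathbb{R}^2$ with curvature $\ge c$ and the strip $S=\{|x_2|\le\delta\}$, one has
\begin{equation*}
|\Gamma_\delta\cap S|\lesssim \delta^{3/2}.
\end{equation*}

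\textbf{Step 3: The planar estimate.} Every point of $\Gamma_\delta\cap S$ lies within distance $\delta$ of a point of $\Gamma\cap\{|x_2|\le 2\delta\}$, so it suffices to show that $\mathcal{H}^1(\Gamma\cap\{|x_2|\le 2\delta\})\lesssim \sqrt{\delta}$. Parametrize $\Gamma$ by arclength $s\mapsto(\gamma_1(s),\gamma_2(s))$ and let $\theta(s)$ denote the angle between the unit tangent and the $x_1$-axis. Since $|\theta'(s)|$ equals the curvature and is $\ge c$, the set $\{s:|\theta(s)|\le\theta_0\}$ has measure $\lesssim \theta_0/c$. Split $\Gamma\cap\{|x_2|\le 2\delta\}$ into a \emph{tangential} part $\{|\theta|\le\theta_0\}$ and a \emph{transverse} part $\{|\theta|>\theta_0\}$: the former contributes arclength $\lesssim \theta_0$, while on the latter $|\gamma_2'(s)|=|\sin\theta(s)|\gtrsim \theta_0$, and the curvature lower bound permits only $O(1)$ excursions through the slab, each of arclength $\lesssim \delta/\theta_0$. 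Optimizing at $\theta_0 = \sqrt{\delta}$ yields total arclength $O(\sqrt{\delta})$, and thus $|\Gamma_\delta\cap S|\lesssim\delta^{3/2}$.

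The main technical point is Step 2, namely transferring the geodesic-curvature hypothesis on $\Gamma$ to a quantitative Euclidean-curvature lower bound on each piece of the coordinate chart and justifying the subdivision into $O(1)$ pieces independent of $\delta$. Once this is in place, the planar estimate in Step 3 is essentially the classical geometric fact that a circle of curvature $c$ meets a strip of width $\delta$ in an arc of length $\sim\sqrt{\delta/c}$, which saturates the bound exactly when $\Gamma$ is tangent to $\tilde{\Gamma}$.
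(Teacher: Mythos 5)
Your proof is correct, but it takes a genuinely different route from the paper's. The paper works in geodesic normal coordinates centered at an intersection point $p$ of $\Gamma$ and $\tilde\Gamma$: there $\tilde\Gamma$ becomes a straight line through the origin, the Christoffel symbols vanish at $p$, and Lemma \ref{le21} transfers the geodesic-curvature lower bound on $\Gamma$ to a Euclidean acceleration bound; the planar fact $|\ell_\delta\cap\gamma_\delta|\lesssim\delta^{3/2}$ is then invoked as known. You instead use Fermi coordinates along the geodesic $\tilde\Gamma$, which buys you an exact identity $\tilde\Gamma_\delta=\{|x_2|\le\delta\}$ rather than an approximate one (and no need to locate or even assume an intersection point), and you go on to prove the planar estimate from scratch via the angle-splitting argument. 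Your tangential/transverse decomposition with $\theta_0=\sqrt\delta$ is a clean, self-contained proof of the Euclidean lemma that the paper treats as a black box, and it also makes visible exactly where the $\delta^{3/2}$ exponent is sharp.

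One point worth tightening in Step 2: the phrase ``a correction involving the uniformly bounded Christoffel symbols'' is not quite enough, since a merely $O(1)$ correction would not preserve the lower bound. What actually makes the argument work is that in Fermi coordinates the Christoffel symbols vanish identically on $\{x_2=0\}$, and the only portion of $\Gamma$ relevant to the intersection $\Gamma_\delta\cap\tilde\Gamma_\delta$ lies in $\{|x_2|\le2\delta\}$, where the Christoffel symbols are $O(\delta)$; hence the Euclidean curvature there is $\ge c/2$ once $\delta$ is small. This plays exactly the role of the paper's Lemma \ref{le21} in normal coordinates, and once stated this way the ``subdivision into $O(1)$ arcs'' is only needed to handle $\Gamma$ exiting the Fermi chart finitely many times, not to recover the curvature bound.
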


To prove Lemma \ref{le41} we first record some basic geometric lemmas.

\begin{lemma}\label{le21}
Let $(M,g)$ be a Riemannian manifold, and let $\exp_p:T_pM\to M$ be the exponential map at a point $p\in M$. Let $u:[0,\varepsilon)\to T_pM$ be a smooth curve with $u(0)=0$ and $|u'(0)|=1$. Define a curve $\gamma:[0,\varepsilon)\to M$ by
\begin{equation}
    \gamma(s)=\exp_p\big(u(s)\big).
\end{equation}
Assume that
\begin{equation}\label{eq:geo}
   \big( \nabla_{\gamma'(s)}\gamma'(s),\, \nabla_{\gamma'(s)}\gamma'(s) \big) \ge c>0,\quad s\in(0,\varepsilon),
\end{equation}
where $\nabla$ is the Levi-Civita connection of $g$. Then there exist $\tilde c>0$ and $0<\tilde\varepsilon<\varepsilon$ such that
\begin{equation}
    |u''(s)|\ge \tilde c,\quad s\in(0,\tilde\varepsilon).
\end{equation}
\end{lemma}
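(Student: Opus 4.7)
The plan is to reduce the problem to a coordinate computation using geodesic normal coordinates at $p$. These coordinates have two properties tailor-made for the lemma: first, the exponential map becomes the identity, so in the chart $\Gamma(s)$ is literally the same curve as $u(s)$; second, the Christoffel symbols vanish at the origin, i.e.\ $\Gamma^k_{ij}(0)=0$. Thus the intrinsic geodesic acceleration $\nabla_{\Gamma'}\Gamma'$ and the coordinate (Euclidean) acceleration $u''$ agree at $s=0$, and differ only by a correction of lower order in $s$.

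First I would fix such normal coordinates and expand
\[
(\nabla_{\Gamma'(s)}\Gamma'(s))^k \;=\; (u^k)''(s) \;+\; \Gamma^k_{ij}(u(s))\,(u^i)'(s)\,(u^j)'(s).
\]
Since $u$ is smooth with $u(0)=0$ and $|u'(0)|=1$, on a small interval $(0,\varepsilon')$ we have $|u(s)|\leq 2s$ and $|u'(s)|\leq 2$. Combined with $\Gamma^k_{ij}(0)=0$ and the smoothness of the Christoffel symbols, this yields $|\Gamma^k_{ij}(u(s))|\leq C|u(s)|=O(s)$, so the correction term $\Gamma^k_{ij}(u(s))(u^i)'(s)(u^j)'(s)$ is $O(s)$.

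Next, I would apply the triangle inequality. Because the metric at the origin of normal coordinates coincides with the Euclidean inner product, the Riemannian norm on tangent vectors is comparable to the coordinate Euclidean norm on a sufficiently small ball. Combining this equivalence with the assumed lower bound $|\nabla_{\Gamma'(s)}\Gamma'(s)|_g \geq \sqrt{c}$ and the $O(s)$ bound on the correction term gives
\[
|u''(s)|\;\geq\;|\nabla_{\Gamma'(s)}\Gamma'(s)|_g - C\,s\;\geq\;\sqrt{c}-Cs.
\]
Choosing $\tilde{\varepsilon}<\min\{\varepsilon,\varepsilon',\sqrt{c}/(2C)\}$, we conclude that $|u''(s)|\geq\sqrt{c}/2=:\tilde{c}$ on $(0,\tilde{\varepsilon})$, as required.

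The argument is essentially routine once normal coordinates are invoked, and there is no genuine obstacle. The one point that deserves emphasis is that the bound $|\Gamma^k_{ij}(u(s))|=O(s)$, rather than merely $O(1)$, is precisely what allows the Riemannian lower bound to pass to a Euclidean lower bound; in an arbitrary coordinate chart this step would fail, so working in normal coordinates is essential.
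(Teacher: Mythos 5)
Your proposal is correct and follows essentially the same approach as the paper's proof: work in geodesic normal coordinates centered at $p$, use the identity $(\nabla_{\Gamma'}\Gamma')^k = (u^k)'' + \Gamma^k_{ij}(u)\,(u^i)'(u^j)'$, exploit $\Gamma^k_{ij}(0)=0$ to make the correction term small on a short interval, and conclude via the triangle inequality. Your version is, if anything, a bit more careful than the paper's — you track the $O(s)$ size of the Christoffel term explicitly and distinguish $\sqrt{c}$ from $c$ when passing from the inner product hypothesis to a lower bound on the norm, and you note the needed comparability of the Riemannian and Euclidean norms near the origin, all of which the paper handles somewhat informally.
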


\begin{proof}
Work in geodesic normal coordinates centered at $p$, so that $g_{ij}(p)=\delta_{ij}$ and the Christoffel symbols satisfy $\Gamma_{ij}^k(p)=0$. Write
\begin{equation*}
    u(s)=\big(u_1(s),\dots,u_n(s)\big).
\end{equation*}
In these coordinates,
\begin{equation*}
    \nabla_{\gamma'(s)}\gamma'(s)
    = u''(s)+\Gamma_{ij}^k\big(u(s)\big)\,u'_i(s)\,u'_j(s)\,e_k,
\end{equation*}
where $\{e_k\}$ is the coordinate frame. Since $\Gamma_{ij}^k(p)=0$, by shrinking $\tilde\varepsilon$ if necessary we may assume
\begin{equation*}
    \big|\Gamma_{ij}^k\big(u(s)\big)\big|\le \frac{c}{1000},\qquad s\in(0,\tilde\varepsilon),
\end{equation*}
and, by continuity of $u'$, that $|u'(s)|\le 2$ on $(0,\tilde\varepsilon)$. Then, using \eqref{eq:geo} and the triangle inequality,
\begin{equation*}
    |u''(s)|
    \ge \big|\nabla_{\gamma'(s)}\gamma'(s)\big|
       - \big|\Gamma_{ij}^k\big(u(s)\big)\,u'_i(s)\,u'_j(s)\big|
    \ge \tilde c,
\end{equation*}
for some $\tilde c>0$, as claimed.
\end{proof}

\begin{proof}[Proof of Lemma \ref{le41}]
Assume $\gamma_1$ and $\gamma_2$ intersect at a point $p$. Work in geodesic normal coordinates centered at $p=\gamma_1(t_0)=\gamma_2(\tilde t_0)$. In these coordinates the geodesic $\gamma_2$ is represented by a straight line $\ell$, and we denote by $\gamma$ the image of $\gamma_1$.

By Lemma \ref{le21} there exist $\tilde c>0$ and $\tilde\varepsilon>0$ such that the Euclidean acceleration of $\gamma$ satisfies
\[
    |\gamma''(t)|\ge \tilde c,\qquad t\in(t_0-\tilde\varepsilon,\,t_0+\tilde\varepsilon).
\]
Equivalently, the Euclidean curvature of $\gamma$ is bounded below on this interval (after possibly shrinking $\tilde\varepsilon$).

In $\R^2$, as discussed above, for $0<\delta\ll1$, the overlap of the $\delta$–neighborhoods of a straight line and a  curve with curvature bounded below by a positive constant obeys
\[
    \Big|\ell_\delta\cap \gamma_\delta\Big|\lesssim \delta^{3/2},
\]
where the implicit constant depends only on the lower curvature bound. Hence, in our coordinates,
\begin{equation}\label{eq:34}
    \Big|(\gamma_2)_\delta\cap (\gamma_1)_\delta\Big|\lesssim \delta^{3/2}.
\end{equation}

To transfer \eqref{eq:34} to $(M,g)$, fix $r>0$ small so that on the normal coordinate ball $B_r$ one has
\[
    |g_{ij}(x)-\delta_{ij}|\le C r^2,\qquad x\in B_r,
\]
and the Riemannian volume element is comparable to the Euclidean one. For $\delta\ll r$, the sets $(\gamma_1)_\delta$ and $(\gamma_2)_\delta$ lie inside $B_r$, so the Jacobian and metric comparability give that Riemannian and Euclidean areas differ by at most a fixed multiplicative constant. Therefore the bound \eqref{eq:34} holds (up to a harmless constant) in the Riemannian metric as well. This completes the proof.
\end{proof}

\subsection{Proof of Proposition \ref{theo2} and its connection to the Mizohata--Takeuchi conjecture}
Before proving Proposition \ref{theo2}, we provide additional motivation by discussing the following conjecture of Mizohata--Takeuchi in the plane.
\begin{conjecture}[Mizohata--Takeuchi]\label{M-T}
Let $\Sigma$ be a $C^2$ curve in $\mathbb{R}^2$ with nonvanishing curvature, and let $d\mu_\Sigma$ be the measure on $\Sigma$ induced by the Lebesgue measure in $\mathbb{R}^2$. Define
\begin{equation*}
    \widehat{g\,d\mu_\Sigma}(x):=\int_\Sigma e^{-i x \cdot \xi}\, g(\xi)\, d\mu_\Sigma(\xi).
\end{equation*}
For any weight function $w:\mathbb{R}^2\to[0,\infty)$, set
\begin{equation*}
    \|Xw\|_{L^\infty}:=\sup_{\ell}\int_{\ell} w(x)\,dx,
\end{equation*}
where the supremum is taken over all lines $\ell\subset\mathbb{R}^2$. Then there exists a constant $C$, depending only on $\Sigma$, such that
\begin{equation}
    \int_{\mathbb{R}^2} \big|\widehat{g\,d\mu_\Sigma}(x)\big|^2 w(x)\,dx \le C\,\|Xw\|_{L^\infty}\int_\Sigma |g(\xi)|^2\, d\mu_\Sigma(\xi).
\end{equation}
\end{conjecture}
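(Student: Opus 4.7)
My plan is a wave packet decomposition of $\widehat{g\,d\sigma}$ adapted to the relevant spatial scale, combined with the elementary $X$-ray bound for tubes. First, by a standard rescaling and limiting argument it suffices to show that for every $R\geq 1$,
\[
\int_{B(0,R)} |\widehat{g\,d\sigma}(x)|^2\, w(x)\, dx \leq C\,\|Xw\|_{L^\infty}\|g\|_{L^2(\sigma)}^2,
\]
with $C$ independent of $R$ and $w$. The uncertainty principle then lets one treat $w$ as essentially constant on balls of radius $R^{-1/2}$, so the interesting part of $w$ is its restriction to a finite union of such balls inside $B(0,R)$.

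Next, I would partition $\Sigma$ into arcs $\theta$ of arclength $R^{-1/2}$ and decompose $g=\sum_\theta g_\theta$ with $\mathrm{supp}\, g_\theta\subset\theta$. Each $\widehat{g_\theta\,d\sigma}$ admits the standard wave packet decomposition $\sum_T c_T\phi_T$, where $T$ ranges over a tiling of $\mathbb{R}^2$ by rectangles of dimensions $R^{1/2}\times R$ whose long direction is parallel to the outward unit normal $n_\theta$ to $\Sigma$ at the center of $\theta$, and the $\phi_T$ are $L^2$-normalized bumps essentially supported on $T$. Almost-orthogonality between different arcs then gives $\sum_T |c_T|^2 \lesssim \|g\|_{L^2(\sigma)}^2$. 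The key geometric input is the tube bound
\[
\int_T w(x)\, dx\leq C R^{1/2}\|Xw\|_{L^\infty},
\]
obtained by viewing $T$ as an $R^{1/2}$-fattening of a line segment in direction $n_\theta$ and applying Fubini together with the definition of the $X$-ray transform.

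The crux of the argument---and the reason the conjecture is hard---is controlling the off-diagonal terms in
\[
\int |\widehat{g\,d\sigma}|^2 w\,dx = \sum_{T,T'} c_T\overline{c_{T'}}\int \phi_T\,\overline{\phi_{T'}}\, w\,dx.
\]
Cross terms with $T,T'$ parallel can be dispatched by $L^2$-orthogonality, but for $T,T'$ coming from distinct arcs making an angle $\alpha\gtrsim R^{-1/2}$ the interaction region $T\cap T'$ is a parallelogram of area $\sim R/\alpha$, and the weighted cumulative interaction $\sum_{T,T'}|c_T c_{T'}|\int_{T\cap T'}w$ is the principal obstacle: the naive bound $\int_{T\cap T'}w\lesssim (R/\alpha)\|w\|_\infty$ is far too crude, since it loses the key information that $w$ has $X$-ray transform in $L^\infty$ rather than simply being bounded. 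A natural route is to combine a C\'ordoba-type $L^4$ pairing identity with a refined \emph{weighted} incidence bound on tube intersections---counting each intersection $T\cap T'$ with weight $\int_{T\cap T'}w$ rather than $|T\cap T'|$---perhaps via a polynomial-partitioning argument in the spirit of Guth. The planar Mizohata--Takeuchi conjecture remains open in full generality, and a complete proof would amount to extracting precisely the sharp constant $\|Xw\|_{L^\infty}$ from the combinatorial structure of these weighted interactions, which is a substantial open problem beyond the scope of the present work.
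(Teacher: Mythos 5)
There is a basic mismatch to flag first: the statement you were given is a \emph{conjecture}, and the paper contains no proof of it. Conjecture~\ref{M-T} is stated only as motivation for the authors' variable-coefficient analogue (Conjecture~\ref{VTM}), which is likewise left open; the only unconditional results proved in that section are Proposition~\ref{theo2} and Proposition~\ref{theo4} (hence Theorem~\ref{theo:cur}), obtained by a wave packet decomposition at scale $R^{1/2}$ combined with the geometric tangency Lemma~\ref{le41}, orthogonality (Lemma~\ref{ortho}), and H\"ormander's $L^2$ bound --- an argument that deliberately sidesteps the transversal cross-term problem by only ever using the diagonal (single wave packet versus tube intersection) information, at the cost of proving a strictly weaker, restricted statement rather than the full weighted inequality.

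Measured against the statement itself, your proposal therefore has a genuine gap, but it is exactly the gap you yourself identify: the reduction to scale $R$, the decomposition into arcs of length $R^{-1/2}$ and $R^{1/2}\times R$ wave packets, the almost-orthogonality $\sum_T|c_T|^2\lesssim\|g\|_{L^2(\sigma)}^2$, and the single-tube estimate $\int_T w\,dx\lesssim R^{1/2}\|Xw\|_{L^\infty}$ are all standard and correct (your claim that parallel cross terms are ``dispatched by $L^2$-orthogonality'' needs the rapid spatial decay of the packets in the weighted integral, but that is routine), yet these ingredients only control the diagonal; the weighted interactions between packets at angular separation $\alpha\gtrsim R^{-1/2}$ are the entire content of the conjecture, and no known C\'ordoba-type $L^4$ or polynomial-partitioning argument extracts the constant $\|Xw\|_{L^\infty}$ from them. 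So your sketch does not prove the statement, but your honest assessment of where it stalls is accurate and is consistent with the paper, which proves nothing stronger: it only formulates the conjecture and derives what it would imply (e.g.\ \eqref{eq:mt1}, \eqref{eq:mt2}, \eqref{eq:osml}), while the sharp results it actually establishes in the range $\tfrac13\le s\le\tfrac12$ come from the unconditional Proposition~\ref{theo4} rather than from any resolution of Conjecture~\ref{M-T}.
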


To relate the Mizohata--Takeuchi conjecture to Theorem \ref{theo:cur}, we propose the following variable-coefficient analogue.

\begin{conjecture}[Variable-Coefficient Mizohata--Takeuchi Conjecture]\label{VTM}
Let $\mathcal T^\lambda$ be as defined above. For any weight $w:B_\lambda\to[0,\infty)$, define
\begin{equation*}
    \|X^\lambda w\|_{L^\infty}:=\sup_{\theta,v}\int_{\Gamma_{\xi_\theta,v}^\lambda} w(x)\,dx,
\end{equation*}
where the supremum is taken over all curves in the collection $\{\Gamma_{\xi_\theta,v}^\lambda\}_{\theta,v}$. Then, for any $\varepsilon>0$,
\begin{equation}\label{vmt}
    \int_{B_\lambda} |\mathcal T^\lambda f(x)|^2 w(x)\,dx \le C_\varepsilon\,\lambda^\varepsilon\,\|X^\lambda w\|_{L^\infty} \int_{\mathbb{R}} |f(\xi)|^2\,d\xi,
\end{equation}
for some constant $C_\varepsilon>0$ independent of the choice of $w$.
\end{conjecture}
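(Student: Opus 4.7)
The plan is to adapt the multi-scale wave packet machinery developed in Section~3 (Bourgain--Guth broad/narrow analysis plus parabolic rescaling) to this weighted $L^2$ setting, treating $\|X^\lambda w\|_{L^\infty}$ as the natural replacement for the $\alpha$-dimensional bound $A_\alpha(H)$. After the usual reduction to a phase in reduced form, I would perform the wave packet decomposition at the scale $R=\lambda$, writing $f=\sum_{\theta,v}f_{\theta,v}$ so that each $\mathcal T^\lambda f_{\theta,v}$ is essentially concentrated on the curved tube $T_{\theta,v}$ of dimensions $\lambda^{1/2+\varepsilon}\times\lambda$ around the curve $\Gamma^\lambda_{\theta,v}$ appearing in the definition of $\|X^\lambda w\|_{L^\infty}$.

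The diagonal contribution is the easy step and already suggests the conjectured bound. Using the pointwise estimate $\|\mathcal T^\lambda f_{\theta,v}\|_{L^\infty}\lesssim \lambda^{-1/4}\|f_{\theta,v}\|_{L^2}$ together with Fubini along $\Gamma^\lambda_{\theta,v}$,
\begin{equation*}
\int_{T_{\theta,v}}w\,dx\,\lesssim\,\lambda^{1/2+\varepsilon}\,\|X^\lambda w\|_{L^\infty},
\end{equation*}
so summing in $(\theta,v)$ and using the almost-orthogonality of Lemma~\ref{ortho} gives the diagonal contribution $\lesssim \lambda^{\varepsilon}\|X^\lambda w\|_{L^\infty}\|f\|_{L^2}^{2}$. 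Next I would run a Bourgain--Guth broad/narrow decomposition at scale $K$: expand $|\mathcal T^\lambda f|^2=\sum_{\tau,\tau'}\mathcal T^\lambda f_\tau\,\overline{\mathcal T^\lambda f_{\tau'}}$ and split $B(0,\lambda)$ into broad and narrow $K^2$-squares. On broad squares, apply the bilinear Carleson--Sj\"olin estimate (Theorem~\ref{multit}) in its weighted form, controlling $\int_Q |\mathcal T^\lambda f_{\tau_1}\mathcal T^\lambda f_{\tau_2}|\,w$ by combining transversality of the tubes $T_{\tau_1,\cdot}, T_{\tau_2,\cdot}$ with the geometric lemma of Section~5.1 (two transversal curved tubes of width $\lambda^{1/2}$ intersect in a set of area comparable to a single tube cross-section, so the $w$-mass picked up is again controlled by $\|X^\lambda w\|_{L^\infty}$ along either tube). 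On narrow squares, I would apply parabolic rescaling exactly as in Lemma~\ref{para lemma}: after the rescaling $(x_1,x_2,\xi)\mapsto(Kx_1,K^2x_2,K^{-1}\xi)$ the narrow contribution reduces to the same weighted inequality for a new phase of reduced form at scale $\lambda/K^2$ and a new weight $\tilde w$, which I would have to verify satisfies $\|X^{\lambda/K^2}\tilde w\|_{L^\infty}\lesssim \|X^\lambda w\|_{L^\infty}$ because the rescaled curves $\tilde\Gamma^{\lambda/K^2}_{\theta,v}$ are precisely images of the original curves $\Gamma^\lambda$.

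Closing the induction on $\lambda$ in the Bourgain--Guth framework would then yield the claimed bound with a $\lambda^{\varepsilon}$-loss, exactly as in the proof of Theorem~\ref{theo61}. The hard part — and in my view the reason the conjecture remains open even in the translation-invariant setting — is the narrow/tangential case: two wave packets whose tubes are nearly tangent at a small angle $\sim K^{-1}$ can pile up a large $w$-mass along a single curve, and the bilinear inequality is useless there because the transversality constant $\nu$ degenerates. The parabolic rescaling is what usually rescues the narrow case, but one must confirm that the X-ray norm $\|X^\lambda w\|_{L^\infty}$ truly transforms compatibly, i.e.\ that the family of curves $\{\Gamma^\lambda_{\theta,v}\}$ is stable under the rescaling up to negligible perturbations of the Carleson--Sj\"olin phase; any loss here would propagate through the induction and destroy the clean $\lambda^{\varepsilon}$ bound. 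A possible intermediate result would therefore be a partial version of Conjecture~\ref{VTM} where $w$ is restricted to be a finite sum of indicators of tubes from the family $\{\Gamma^\lambda_{\theta,v}\}$ with bounded multiplicity, which already falls out of the broad analysis above and is sufficient to recover Proposition~\ref{theo2}.
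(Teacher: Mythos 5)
This statement is Conjecture~\ref{VTM}: the paper does not prove it, and does not claim to. It is introduced only as motivation --- the authors derive what it \emph{would} imply for tube neighborhoods (the bounds following \eqref{vmt}) and then prove the unconditional Proposition~\ref{theo2}/\ref{theo4} by a completely different, elementary route (decompose $T_{\lambda^s}(\gamma^\lambda)$ into balls of radius $\lambda^{\frac{k+s}{k+1}}$, apply H\"ormander's $L^2(B_R)$ bound on each ball, and count ball--tube incidences via the tangency/geometric Lemma~\ref{le41}, together with the orthogonality Lemma~\ref{ortho}). So a correct treatment of this statement is either to leave it as a conjecture or to supply a genuinely new proof; your proposal does neither, and in fact you concede as much in your last paragraph.

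Concretely, the gaps in your sketch are exactly where the Mizohata--Takeuchi problem is hard, and they are not repaired by the paper's machinery. First, the diagonal computation controls only $\sum_{\theta,v}\int|\mathcal T^\lambda f_{\theta,v}|^2 w$; the left side of \eqref{vmt} contains all off-diagonal terms, and wave packets whose tubes overlap tangentially along a single curve can interfere constructively, so the diagonal bound is not a bound for $\int|\mathcal T^\lambda f|^2w$. Second, the broad-case step is not a weighted estimate: Theorem~\ref{multit} is an unweighted $L^{q/2}(B_R)$ bound, and converting $\sum_{Q\ \mathrm{broad}}\|\mathcal T^\lambda f\|_{L^\infty(Q)}^2\int_Q w$ into something controlled by $\|X^\lambda w\|_{L^\infty}\|f\|_{L^2}^2$ requires summing $\ell^2$ in $Q$ against an $\ell^4$-type bilinear input, which by Cauchy--Schwarz costs a power of the number of squares (a power of $R$), not a power of $K$; the weighted $\ell^2$ structure that rescues this in Theorem~\ref{theo61} is tied to the fractal hypothesis $\sum_{Q\subset B_r}\mu_Q\lesssim r^\alpha$, and the X-ray norm provides no analogous counting control on how many squares carry mass. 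Third, as you note yourself, the narrow case does not close: the parabolic rescaling of Lemma~\ref{para lemma} changes both the weight and the relevant curve family, and there is no argument that $\|X^{\lambda/K^2}\tilde w\|_{L^\infty}\lesssim\|X^\lambda w\|_{L^\infty}$ with constants that can be iterated $O(\log_K R)$ times; tangential pile-up along a single $\Gamma^\lambda_{\theta,v}$ is precisely the enemy, and the bilinear transversality constant degenerates there. Your proposed fallback (weights that are bounded-multiplicity sums of tube indicators) is essentially what the paper proves unconditionally, but it does so via Proposition~\ref{theo4}, not via any weighted Bourgain--Guth scheme.
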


To see the consequence of Conjecture \ref{VTM} for our problem, we introduce the following notion.
\begin{definition}\label{order}
    We say that a curve $\gamma \subset B(0,1)$, parametrized by arc length, is \textit{transversal} to the family $\{\Gamma_{\xi_\theta,v}\}_{\theta,v}$ if there exists $c>0$ such that
\begin{equation}\label{eq:4}
\big|\,\partial_{x\xi}^2\phi\big(\gamma(t),\xi\big)\cdot\gamma'(t)\,\big|\ge c,
\quad \text{for all } t\in(0,\varepsilon),\ \xi\in(0,\varepsilon),
\end{equation}
where $\partial_{x\xi}^2\phi$ denotes the $x$-gradient of $\partial_\xi \phi$.

Given $\theta$ and $v$, we say that $\gamma$ is \textit{tangent} to $\Gamma_{\xi_\theta,v}$ to order $k$ if, after arc-length parametrization, the two curves meet at some parameter value $t_0$. Their derivatives agree up to order $k$ at $t_0$, and their $(k+1)$-st derivatives differ. For consistency, we regard the transversal case as tangency of order zero.

We say that $ \gamma $ is tangent to the family $ \{\Gamma_{\xi_\theta,v}\}_{\theta,v}$ to order at most $ k $ if, for every $ \Gamma_{\xi_\theta,v} $ in the family, $ \gamma $ is tangent to $ \Gamma_{\xi_\theta,v} $ to order at most $ k $, and there exists a uniform constant $ c>0 $ such that, at each intersection point, if their derivatives up to order $ k $ agree, then their $ (k+1) $-st derivatives are separated by at least $ c $.
\end{definition}

After rescaling, we write $T_{\lambda^{s}}(\gamma^\lambda)$ for the $\lambda^s$-neighborhood of $\gamma^\lambda$. Assuming Conjecture \ref{VTM}, we obtain
\begin{equation*}
    \int_{T_{\lambda^{s}}(\gamma^\lambda)} |\mathcal T^\lambda f(x)|^2\,dx
    \le C_\varepsilon \lambda^\varepsilon
    \sup_{\theta,v}\big|\Gamma_{\xi_\theta,v}^\lambda \cap T_{\lambda^{s}}(\gamma^\lambda)\big|\,\|f\|_{L^2}^2,
\end{equation*}
where $|\cdot|$ denotes arc length.

Since $\gamma$ is tangent to the family $\{\Gamma_{\xi_\theta,v}\}_{\theta,v}$ to order at most $k$, we have uniformly in $(\theta,v)$ that
\begin{equation*}
   \big|\Gamma_{\xi_\theta,v}^\lambda \cap T_{\lambda^{s}}(\gamma^\lambda)\big|
   \le \lambda^{\frac{k+s}{k+1}}.
\end{equation*}
Thus, for $0\le s\le \tfrac12$,
\begin{equation}\label{eq:mt1}
    \int_{T_{\lambda^{s}}(\gamma^\lambda)} |\mathcal T^\lambda f(x)|^2\,dx
    \le C_\varepsilon\,\lambda^{\frac{k+s}{k+1}+\varepsilon}\,\|f\|_{L^2}^2.
\end{equation}
In particular, if $k=1$, then
\begin{equation}\label{eq:mt2}
    \int_{T_{\lambda^{s}}(\gamma^\lambda)} |\mathcal T^\lambda f(x)|^2\,dx
    \le C_\varepsilon\,\lambda^{\frac{1+s}{2}+\varepsilon}\,\|f\|_{L^2}^2.
\end{equation}

We now prove that for $0 \le s \le \tfrac12$ the estimate predicted by the variable-coefficient Mizohata--Takeuchi conjecture holds. We will show in the final section that, when $k=1$, it is sharp for $\tfrac13 \le s \le \tfrac12$.

\begin{proposition}\label{theo4}
Let $0\le s\le\frac12.$ If $\gamma$ is tangent to the family $\{\Gamma_{\xi_\theta,v}\}$ to order at most $k\ge1$, then for any $\varepsilon>0$,
\begin{equation}
    \|\mathcal T^\lambda f\|_{L^2\big(T_{\lambda^{s}}(\gamma^\lambda)\big)}^2
    \le C_\varepsilon\,\lambda^{\frac{k+s}{k+1}+\varepsilon}\,\|f\|_{L^2}^2.
\end{equation}
\end{proposition}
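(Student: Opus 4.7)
The plan is to establish Proposition~\ref{theo4} by combining a $TT^*$ reformulation with a change of variables that brings the kernel of $\mathcal T^{\lambda*}\mathbf 1_A\mathcal T^\lambda$ into an (approximate) Fourier multiplier form, so that the $L^2$ operator norm is controlled by the maximum arc length of a characteristic curve $\Gamma^\lambda_{\theta,v}$ inside the tubular region $A:=T_{\lambda^s}(\gamma^\lambda)$.

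First, I reduce by duality. Writing
\[
\|\mathcal T^\lambda f\|^2_{L^2(A)}=\langle Qf,f\rangle\le\|Q\|_{L^2\to L^2}\|f\|^2_{L^2},\qquad Q:=\mathcal T^{\lambda*}\mathbf 1_A\mathcal T^\lambda,
\]
with $Q$ self-adjoint and with kernel
\[
M(\xi,\eta)=\int_A e^{i[\phi^\lambda(x,\eta)-\phi^\lambda(x,\xi)]}\,\overline{a^\lambda(x,\xi)}a^\lambda(x,\eta)\,dx,
\]
it is enough to show $\|Q\|_{L^2\to L^2}\lesssim_\varepsilon\lambda^{(k+s)/(k+1)+\varepsilon}$.

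Next, I would localize in $\xi$ via a partition of unity $\{\psi_\tau\}$ at the wave-packet scale $K^{-1}=\lambda^{-1/2}$, so that on the diagonal block $\psi_\tau Q\psi_\tau$ the quadratic Taylor remainder of the phase is $O(1)$. Fixing $\xi_\tau\in\tau$, I change variables $x\mapsto(u,v)$ with $v=\partial_\xi\phi^\lambda(x,\xi_\tau)$; the level sets $\{v=v_0\}$ are precisely the characteristic curves $\Gamma^\lambda_{\xi_\tau,v_0}$, and the Jacobian is uniformly bounded by the Carleson--Sj\"olin non-degeneracy. In these coordinates
\[
M_\tau(\xi,\eta)\approx\int dv\,e^{i(\eta-\xi)v}\,H_\tau(v;\xi,\eta),\qquad |H_\tau|\lesssim L_\tau(v):=\bigl|\Gamma^\lambda_{\xi_\tau,v}\cap A\bigr|_{\mathrm{1D}},
\]
displaying $\psi_\tau Q\psi_\tau$ as a pseudodifferential operator with symbol $H_\tau$. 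A Calder\'on--Vaillancourt-type $L^2$ bound then gives $\|\psi_\tau Q\psi_\tau\|_{L^2\to L^2}\lesssim\sup_v L_\tau(v)$. The geometric input now enters: the tangency-of-order-$k$ hypothesis says that, in the rescaled picture, $\gamma^\lambda$ and $\Gamma^\lambda_{\xi_\tau,v}$ separate as $\lambda^{-k}|t-t_0|^{k+1}$ along their arc-length parameter, so $\lambda^{-k}|t-t_0|^{k+1}\le\lambda^s$ forces
\[
L_\tau(v)\lesssim\lambda^{(k+s)/(k+1)}.
\]

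Finally, I sum over $\tau$ via the almost-orthogonality $\|f\|^2_{L^2}\sim\sum_\tau\|f_\tau\|^2_{L^2}$, and handle the off-diagonal blocks $\psi_\tau Q\psi_{\tau'}$ ($\tau\ne\tau'$) by non-stationary phase: there $|\eta-\xi|\gtrsim K^{-1}$ and $|\nabla_x[\phi^\lambda(x,\eta)-\phi^\lambda(x,\xi)]|\gtrsim|\eta-\xi|$, so after smoothing $\mathbf 1_A$ and integrating by parts repeatedly in $x$ the off-diagonal contribution is $\mathrm{RapDec}(\lambda)$. The main obstacle I anticipate is precisely this technical bookkeeping: verifying that the $(\xi,\eta)$-dependence of the reduced amplitude $H_\tau$ lies in an acceptable pseudodifferential symbol class so the $L^2$ bound is applicable, and that smoothing $\mathbf 1_A$ does not generate harmful boundary terms from $\partial A$. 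This reflects the fact that Proposition~\ref{theo4} is essentially a concrete instance of the variable-coefficient Mizohata--Takeuchi estimate \eqref{vmt} — which is open in general — and it is the tubular structure of $A$ together with the explicit tangency-order hypothesis that make this instance tractable.
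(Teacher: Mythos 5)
Your approach is genuinely different from the paper's. The paper argues more elementarily: it decomposes $A:=T_{\lambda^s}(\gamma^\lambda)$ into balls $B$ of radius $\rho=\lambda^{(k+s)/(k+1)}$, applies H\"ormander's local $L^2$ estimate $\int_B|\mathcal T^\lambda f_B|^2\lesssim\rho\,\|f_B\|_{L^2}^2$ on each ball after restricting to wave packets $T_{\theta,v}$ meeting $B$, then sums over $B$ using the wave-packet orthogonality of Lemma~\ref{ortho} together with the geometric counting fact \eqref{eq hor} that each tube meets $\lesssim\lambda^\varepsilon$ balls of the cover. Your $TT^*$/multiplier route is closer in spirit to how Mizohata--Takeuchi estimates are usually attacked, and it makes the geometric input more transparent: the $L^2$ bound is read off directly from $\sup_v L_\tau(v)$, which you correctly identify as $\lesssim\lambda^{(k+s)/(k+1)}$ under the tangency hypothesis. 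The two arguments share the same arc-length geometry; the paper's version sidesteps almost all of the kernel analysis by working only at the wave-packet level.

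There is, however, a genuine gap in the way you propose to control the off-diagonal blocks. For $\tau\neq\tau'$ you have $|\eta-\xi|\gtrsim K^{-1}=\lambda^{-1/2}$, and indeed $|\nabla_x[\phi^\lambda(x,\eta)-\phi^\lambda(x,\xi)]|\sim|\eta-\xi|$. But this gradient is \emph{small} (at most $O(1)$, and as small as $\lambda^{-1/2}$), so integration by parts in $x$ does not give ${\rm RapDec}(\lambda)$: each integration by parts gains a factor of order $|\eta-\xi|^{-1}\geq1$, which makes things worse, not better. The same issue shows up in the fact that the Fourier support of $\mathcal T^\lambda f$ is contained in a unit ball (Lemma~\ref{leloc}), so the kernel $M(\xi,\eta)$ is concentrated on the diagonal only at the $O(1)$ scale, not at the $K^{-1}$ scale. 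To salvage the scheme you would need to treat the off-diagonal blocks by Cotlar--Stein with polynomial (not rapid) decay, or re-center the Taylor expansion at $(\xi+\eta)/2$ and track the quadratic error carefully, rather than asserting they vanish. Relatedly, smoothing $\mathbf 1_A$ at a scale that both kills boundary terms and preserves the slice-length bound $L_\tau(v)$ needs to be done explicitly. These are repairable but nontrivial; the paper's wave-packet argument avoids them entirely, which is one reason it is the cleaner route here.
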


{Proposition \ref{theo2} corresponds to the case $k=1$ in the above proposition, and Theorem \ref{theo:cur} is a direct consequence of Proposition \ref{theo2}, \eqref{eq refor}, and Lemma \ref{le41}. 
Moreover, Proposition \ref{theo4} may be viewed as a variable-coefficient generalization of Corollary~3.4 in \cite{CIW}, which corresponds to the case $k=0$.
}

\begin{proof}
Cover $T_{\lambda^{s}}(\gamma^\lambda)$ by a finitely overlapping family $\mathcal{B}$ of balls $B$ of radius $\lambda^{\frac{k+s}{k+1}}$. Then
\[
\int_{T_{\lambda^{s}}(\gamma^\lambda)} |\mathcal T^\lambda f(x)|^2\,dx
\le \sum_{B\in\mathcal{B}} \int_{B\cap T_{\lambda^{s}}(\gamma^\lambda)} |\mathcal T^\lambda f(x)|^2\,dx.
\]
By the wave-packet decomposition at scale $R=\lambda$ and Lemma \ref{lem tube}, only packets whose tubes meet $B$ contribute, up to a rapidly decaying error. Therefore,
\[
\int_{B\cap T_{\lambda^{s}}(\gamma^\lambda)} \big|\mathcal T^\lambda f\big|^2
\lesssim
\sum_{(\theta,v):\, T_{\theta,v}\cap B\cap T_{\lambda^{s}}(\gamma^\lambda)\neq\varnothing}
\int_{B} \big|\mathcal T^\lambda f_{\theta,v}\big|^2
+ \mathrm{RapDec}(\lambda)\,\|f\|_{L^2}^2.
\]
Here we use the local almost orthogonality of the packets 
$\mathcal{T}^\lambda f_{\theta,v}$ on balls $B$ of radius 
$\lambda^{\frac{k+s}{k+1}} \ge \lambda^{1/2}$, 
which can be easily seen by expanding the term 
$|\mathcal{T}^\lambda f|^2$ in terms of the wave packet decomposition, 
where the cross terms contribute a rapidly decaying error to the integral on $B$.

Applying H\"ormander’s $L^2$ bound on balls (see, for example, Section 2.1 of \cite{soggeFIO}),
\[
\int_{B} |\mathcal T^\lambda f_{\theta,v}(x)|^2\,dx \lesssim \lambda^{\frac{k+s}{k+1}}\,\|f_{\theta,v}\|_{L^2}^2,
\]
we obtain
\[
\int_{T_{\lambda^{s}}(\gamma^\lambda)} |\mathcal T^\lambda f(x)|^2\,dx
\lesssim \lambda^{\frac{k+s}{k+1}} \sum_{B\in\mathcal{B}} \sum_{(\theta,v):\, T_{\theta,v}\cap B\cap T_{\lambda^{s}}(\gamma^\lambda)\neq\varnothing}
\|f_{\theta,v}\|_{L^2}^2.
\]
By our geometric observation, we see that
\begin{equation}\label{eq hor}
\#\{B\in\mathcal{B}: T_{\theta,v}\cap B\cap T_{\lambda^{s}}(\gamma^\lambda)\neq\varnothing\}\lesssim \lambda^\varepsilon.
\end{equation}
Invoking the almost-orthogonality lemma \ref{ortho}, we conclude that
\[
\int_{T_{\lambda^{s}}(\gamma^\lambda)} |\mathcal T^\lambda f|^2
\lesssim \lambda^{\frac{k+s}{k+1}}\,\lambda^\varepsilon \sum_{\theta,v}\|f_{\theta,v}\|_{L^2}^2
\lesssim \lambda^{\frac{k+s}{k+1}+\varepsilon}\,\|f\|_{L^2}^2,
\]
as claimed.
\end{proof}

For $s<\tfrac{1}{3}$ and $k=1$, the Mizohata--Takeuchi conjecture yields a nonsharp estimate in \eqref{eq:mt1}. Indeed, if we localize $\mathcal T^\lambda f$ to a ball of radius $\lambda^{2s}$ and perform the wave-packet decomposition at that scale, then each wave packet $T_{\theta,v}$ can intersect $T_{\lambda^{s}}(\gamma^\lambda)$ in a segment of length $\lambda^{\frac{1+s}{2}}$. To obtain an improvement from the Mizohata--Takeuchi conjecture, the intersection length $\lambda^{(1+s)/2}$ must be dominated by the packet scale $\lambda^{2s}$, i.e.
\[
\frac{1+s}{2}<2s,
\]
which is equivalent to $s>\tfrac{1}{3}$.

For $0<s<\tfrac{1}{3}$, the sharp bound instead follows directly from the theorem of Pan and Sogge \cite{PS}, which quantifies the optimal gain for oscillatory integral operators with at most fold singularities. Similarly, this is exactly how Hu \cite{huforum} proved his version of Theorem \ref{bgt}.

\begin{remark}
We note a connection between the variable-coefficient Mizohata--Takeuchi conjecture and Kakeya--Nikodym estimates for eigenfunctions. From the Kakeya--Nikodym estimate of Blair and Sogge \cite{BS} one has
\begin{equation}\label{eq:ml}
     \int_{M}|e_\lambda|^4 \, dx \le C_\varepsilon \lambda^\varepsilon \|e_\lambda\|_{L^2}^2 \sup_{\gamma \in \Pi} \int_{\gamma} |e_\lambda|^2 \, ds,
\end{equation}
where $\Pi$ denotes the set of unit-length geodesic segments in $M$.
On the other hand, one can formulate a variable-coefficient Mizohata--Takeuchi conjecture for the Carleson--Sj\"olin operator
\[
S_\lambda f(x) := \lambda^{\frac{1}{2}} \int_M e^{-i\lambda d_g(x,y)} a(x,y) f(y) \, dy,
\]
which is the main term in the approximate spectral projector (see \eqref{eq refor}). After rescaling and a routine calculation, Conjecture \ref{VTM} would yield
\begin{equation}\label{eq:osml}
    \int_{M} |S_\lambda f|^2 w \, dx \le C_\varepsilon \lambda^{\varepsilon} \|f\|_{L^2}^2 \sup_{\gamma \in \Pi} \int_{\gamma} w \, ds.
\end{equation}
Recall that $S_\lambda$ essentially reproduces the eigenfunction $e_\lambda$. Thus, taking $f=e_\lambda$ and $w=|e_\lambda|^2$ recovers \eqref{eq:ml}. At first glance, it may seem surprising that \eqref{eq:osml} has a bilinear flavor. However, this is consistent with the more refined bilinear Kakeya--Nikodym estimates established in \cite{MiaoSoggeXiYang2016}. Moreover, choosing $w=|e_\lambda|^4$ in \eqref{eq:osml} gives a direct link between the critical $L^4(\gamma)$ restriction norm of $e_\lambda$ and its critical $L^6(M)$ norm:
\begin{equation}\label{eq:osml'}
    \int_{M} |e_\lambda|^6\, dx \le C_\varepsilon \lambda^{\varepsilon} \|e_\lambda\|_{L^2}^2 \sup_{\gamma \in \Pi} \int_{\gamma} |e_\lambda|^4 \, ds.
\end{equation}
Since $\delta_1(4)=\tfrac{1}{4}$ and $\delta_2(6)=\tfrac{1}{6}$, any improvement in the $L^4(\gamma)$ restriction bound yields a corresponding improvement in the $L^6(M)$ bound. These connections provide further evidence for the naturalness and strength of Conjecture \ref{VTM}.
\end{remark}

\section{Sharpness of main results}\label{sec 6}
\subsection{Sharpness of Theorem \ref{theomain}}
The sharpness of Theorem \ref{theomain} for large $p$ on $S^2$ follows from Theorem 1.5 in \cite{EP}. In this regime, the exponent $\delta(\alpha,p)=\tfrac12-\tfrac{\alpha}{p}$ captures the behavior of zonal spherical harmonics concentrating at a typical point in the support of an $\alpha$-dimensional measure, which shows that Theorem \ref{theomain} is optimal there.

It remains to establish sharpness for the smaller ranges, namely $\alpha\in(\tfrac12,1]$, $p\in[2,4\alpha]$, and $\alpha\in(1,2]$, $p\in[2,2\alpha+2]$. In both cases, the bounds are saturated by highest-weight spherical harmonics.

Recall that the highest-weight spherical harmonic $\mathcal H_k$ is the restriction to $S^2=\{x\in\mathbb R^3:|x|=1\}$ of the harmonic polynomial $k^{1/4}(x_1+i x_2)^k$. A direct computation shows that $\mathcal H_k$ has $L^2$ mass essentially concentrated in a $k^{-1/2}$-tubular neighborhood of the equator $\gamma_0$, where $|\mathcal H_k|\approx k^{1/4}$.

In particular, when $\alpha\in(\tfrac12,1]$ and $p\in[2,4\alpha]$, we have $\delta(\alpha,p)=\tfrac14$. If $\mu$ is any $\alpha$-dimensional probability measure supported on $\gamma_0$, then
\[
\|\mathcal H_k\|_{L^p(M;\,d\mu)}\approx k^{1/4},\qquad p\ge2,
\]
so the bound in Theorem \ref{theomain} is sharp up to a $k^\varepsilon$ loss in this range.

The situation is a bit more involved in the case $\alpha \in (1,2]$, $p \in [2, 2\alpha+2]$, for which $\delta(\alpha, p) = \frac{1}{4} - \frac{\alpha - 1}{2p}$. We shall prove the following.

\begin{proposition}
For each $\alpha\in(1,2)$ there exist an $\alpha$-dimensional measure $\mu$ and a subsequence $k_n\to\infty$ such that
\[
\|\mathcal H_{k_n}\|_{L^p(M;\,d\mu)}\approx k_n^{\frac14-\frac{\alpha-1}{2p}},\qquad p\ge2.
\]
\end{proposition}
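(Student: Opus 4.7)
The plan is to reduce the saturation claim to a tube-measure computation and then to construct an appropriate $\alpha$-dimensional measure. In local axisymmetric coordinates $(\theta,\phi)$ near the equator $\gamma_0$ (with $\phi$ the latitude), the highest-weight spherical harmonic satisfies $|H_k(\theta,\phi)| = k^{1/4}|\cos\phi|^k$, so $|H_k| \approx k^{1/4}$ on the tubular neighborhood $T_{k^{-1/2}}(\gamma_0) = \{|\phi| \lesssim k^{-1/2}\}$, with Gaussian decay $|H_k(\theta,\phi)| \lesssim k^{1/4} e^{-k\phi^2/2}$ outside. For any $\alpha$-dimensional probability measure $\mu$, covering $T_r$ by $\lesssim r^{-1}$ geodesic $r$-balls gives the a priori bound $\mu(T_r) \lesssim r^{\alpha-1}$, and if this is saturated at $r = k_n^{-1/2}$ for a subsequence, then
$$\|H_{k_n}\|_{L^p(d\mu)}^p \approx k_n^{p/4}\,\mu(T_{k_n^{-1/2}}) \approx k_n^{p/4 - (\alpha-1)/2},$$
which matches the target exponent $1/4-(\alpha-1)/(2p)$ after taking $p$-th roots. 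Hence the task reduces to constructing $\mu$ and a subsequence $\{k_n\}$ so that $\mu(T_{k_n^{-1/2}}(\gamma_0)) \approx k_n^{(1-\alpha)/2}$.

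For the construction I would take a product of uniform measure along the equator and a Cantor measure transverse to it. Fix a small $\epsilon > 0$ and a symmetric self-similar Cantor set $C \subset (-\epsilon,\epsilon)$ containing the origin, with contraction ratio $\rho$, chosen so that its Hausdorff dimension equals $\alpha - 1 \in (0,1)$. Let $\nu$ be its natural self-similar probability measure, which satisfies the Frostman bound $\nu((-r,r)) \leq Cr^{\alpha-1}$ uniformly in $r$, and the matching lower bound $\nu((-r_n,r_n)) \geq cr_n^{\alpha-1}$ at the natural scales $r_n = \rho^n$ with the same absolute constants. Define a Borel probability measure on $S^2$, supported in a neighborhood of $\gamma_0$, by
$$d\mu := \tfrac{1}{2\pi}\, d\theta \otimes d\nu(\phi).$$
Since any geodesic ball $B(x,r) \subset S^2$ is contained, up to bounded metric distortion, in a coordinate rectangle of dimensions $2r \times 2r$, the product structure gives $\mu(B(x,r)) \lesssim r \cdot \nu((\phi_0 - r, \phi_0 + r)) \lesssim r \cdot r^{\alpha-1} = r^\alpha$, so $\mu$ is $\alpha$-dimensional.

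Setting $k_n := \lceil r_n^{-2} \rceil$, one has $\mu(T_{k_n^{-1/2}}(\gamma_0)) = \nu((-k_n^{-1/2}, k_n^{-1/2})) \approx k_n^{(1-\alpha)/2}$ by construction. The lower bound $\|H_{k_n}\|_{L^p(d\mu)}^p \gtrsim k_n^{p/4 - (\alpha-1)/2}$ is then immediate from $|H_{k_n}| \gtrsim k_n^{1/4}$ on $T_{k_n^{-1/2}}$. For the matching upper bound, I would dyadically decompose the $\phi$-axis into shells $\{|\phi| \sim 2^j k_n^{-1/2}\}$, apply the Frostman upper bound $\nu(\text{shell}) \lesssim (2^j k_n^{-1/2})^{\alpha-1}$, and use the Gaussian factor $e^{-p 2^{2j-1}}$ to dominate the polynomial growth $2^{j(\alpha-1)}$; summing the resulting convergent series produces a total contribution of the same order $k_n^{p/4-(\alpha-1)/2}$.

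The principal technical point is verifying that the Cantor measure $\nu$ satisfies both the Frostman upper bound and its saturation along the subsequence $r_n$ with uniform (not $n$-dependent) constants, so that $\approx$ holds without sub-polynomial losses. This is standard for self-similar constructions in which the ratio $N\rho^{\alpha-1} = 1$ determines the dimension; the two bounds follow from self-similarity and the fact that $0$ remains in every generation. Every other step of the argument is a direct consequence of the explicit form of $H_k$ and the product structure of $\mu$.
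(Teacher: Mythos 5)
Your construction is correct, and it takes a genuinely different route from the paper. The paper builds $\mu$ explicitly as a weak limit of approximating measures $\mu_{k_n}$ at a sparse sequence of scales $k_n = 2^{2^n}$, where each $\mu_{k_n}$ is a sum of normalized Lebesgue measure on $\approx k_n^\alpha$ squares arranged in horizontal strips at Fermi-coordinate heights $k_n^{-1}j^{1/(\alpha-1)}$; the required nested structure and the verification that the weak limit is $\alpha$-dimensional are handled somewhat informally. Your approach instead fixes a single product measure $\frac{1}{2\pi}\,d\theta\otimes d\nu(\phi)$, with $\nu$ a self-similar Cantor measure of Hausdorff dimension $\alpha-1$ transverse to the equator, and reads off the saturation from the Ahlfors regularity $\nu((-r,r))\approx r^{\alpha-1}$ at the natural scales $r_n=\rho^n$. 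This is cleaner in two respects: the $\alpha$-dimensionality of $\mu$ is immediate from the product structure and the Frostman bound for $\nu$, and the $\approx$ in the final estimate comes with uniform constants because $\nu$ is a single explicit Ahlfors-regular measure rather than a diagonal limit. Your argument also makes the matching upper bound explicit (dyadic shells in $\phi$ combined with the Gaussian decay $|H_k|\lesssim k^{1/4}e^{-k\phi^2/2}$), a point the paper glosses over by stating only the concentration on the $k^{-1/2}$-tube.

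Two small points worth tightening. First, for $\nu((-r_n,r_n))\gtrsim r_n^{\alpha-1}$ you need $0$ to lie in the Cantor set at every generation; a symmetric construction with an odd number of pieces (say a middle piece centered at the origin) or a one-sided construction on $[0,2\epsilon)$ with $0$ as a fixed endpoint makes this literal. Second, the reduction $\|H_{k_n}\|_{L^p(d\mu)}^p\approx k_n^{p/4}\,\mu(T_{k_n^{-1/2}})$ uses the a priori bound $\mu(T_r)\lesssim r^{\alpha-1}$ only for heuristic motivation; what the upper bound actually needs is the shell decomposition with the Frostman bound for $\nu$ and the Gaussian weight, which you do supply. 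Both are minor and the argument goes through.
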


\begin{proof}
Fix $k$ and construct a measure $\mu_k$ that captures the behavior of $\mu$ at the resolution $k^{-1}$, since the eigenfunction $\mathcal H_k$ is approximately constant at this scale. The full construction of an $\alpha$-dimensional measure $\mu$ is then obtained by a standard inductive procedure along a sparse sequence of scales.

Work in Fermi coordinates $(s,y)$ about a unit-length segment of the equator $\gamma_0$, identifying the $1/10$-neighborhood of $\gamma_0$ with the rectangle $[0,1]\times[0,1/10]$ in the $s$–$y$ plane. To construct $\mu_k$ at resolution $k^{-1}$, we select $\sim k^\alpha$ squares of side length $k^{-1}$ inside this rectangle to serve as the support of $\mu_k$.

Set
\[
\Omega_{\alpha}^k := \bigcup_{1 \le j \le N} [0,1] \times \big[ k^{-1} j^{\frac{1}{\alpha - 1}},\ k^{-1} \big( j^{\frac{1}{\alpha - 1}} + 1 \big) \big],
\]
where $N = \lfloor \tfrac{1}{10} k^{\alpha - 1} \rfloor$.

\begin{figure}[h]
\centering
\begin{tikzpicture}[scale=10]

    \def\tube{0.2} 
    \def\k{0.025} 
    \def\n{10} 
    
    \draw[blue, thick] (-0.5, 0) -- (0.5, 0);
    \node[blue, below left] at (0.5, 0) {$\gamma_0$};
    
    \draw[red, thick, dashed] (-0.5, \tube) -- (0.5, \tube);

    \fill[red, opacity=0.1] (-0.5, 0) rectangle (0.5, \tube);

    \fill[green!70!black, opacity=0.5] (-0.5, 0.01) rectangle (0.5, 0.01 + \k); 
    \fill[green!70!black, opacity=0.5] (-0.5, 0.07) rectangle (0.5, 0.07 + \k); 
    \fill[green!70!black, opacity=0.5] (-0.5, 0.15) rectangle (0.5, 0.15 + \k); 
    \fill[green!70!black, opacity=0.3] (-0.5, 0.25) rectangle (0.5, 0.25 + \k); 
    \fill[green!70!black, opacity=0.3] (-0.5, 0.37) rectangle (0.5, 0.37 + \k); 
    \draw[<->, black] (-0.55, 0.07) -- node[left] {${k}^{-1}$} (-0.55, 0.07 + \k);

    \draw[<->, red] (0.55, 0) -- node[right] {${k}^{-1/2}$} (0.55, \tube);

\end{tikzpicture}
\caption{The $k^{-1/2}$-tubular neighborhood of $\gamma_0$ and the support of $\mu_k$.}\label{fig: muk}
\end{figure}

It is clear that $\Omega_\alpha^k$ comprises $N$ horizontal rectangles, each of dimensions $1\times k^{-1}$. Each rectangle contains $k$ disjoint $k^{-1}\times k^{-1}$ squares in the $s$–direction. Therefore, the total number of $k^{-1}\times k^{-1}$ squares in $\Omega_\alpha^k$ is
\[
\text{Number of squares} = N \times k \approx \tfrac{1}{10}k^{\alpha-1}\times k = \tfrac{1}{10}k^{\alpha}.
\]

To reflect $\alpha$–dimensional scaling at resolution $k^{-1}$, assign to each such square mass $k^{-\alpha}$ under $\mu_k$. Hence
\[
\mu_k(\Omega_\alpha^k) = (\text{Number of squares}) \times (\text{mass per square})
\approx \tfrac{1}{10}k^{\alpha}\times k^{-\alpha} = \tfrac{1}{10}.
\]

We next compute $\|\mathcal H_k\|_{L^p(M;\,d\mu_k)}$. Recall that $\mathcal H_k$ concentrates in a $k^{-1/2}$–tubular neighborhood of $\gamma_0$, where $|\mathcal H_k|\approx k^{1/4}$. First, determine how many rectangles in $\Omega_\alpha^k$ lie within this $k^{-1/2}$–neighborhood (see Figure \ref{fig: muk}). For the rectangle indexed by $j$, its vertical position is approximately
\[
y_j = k^{-1} j^{\frac{1}{\alpha-1}}.
\]
Those within the $k^{-1/2}$–neighborhood satisfy $y_j\le k^{-1/2}$, i.e.
\[
k^{-1} j^{\frac{1}{\alpha-1}} \le k^{-1/2}
\quad\Longleftrightarrow\quad
j^{\frac{1}{\alpha-1}} \le k^{1/2}
\quad\Longleftrightarrow\quad
j \le k^{\frac{\alpha-1}{2}}.
\]
Thus, the number of such rectangles is $\approx k^{\frac{\alpha-1}{2}}$.

Each of these rectangles contains $k$ squares of size $k^{-1}\times k^{-1}$ along the $s$–direction. Therefore, the total number of squares inside the $k^{-1/2}$–tube is
\[
N_{\text{tube}} = k^{\frac{\alpha-1}{2}}\times k = k^{\frac{\alpha+1}{2}}.
\]

On each such square, $|\mathcal H_k|\approx k^{1/4}$, and the $\mu_k$–mass of a square is $k^{-\alpha}$. Hence
\[
\int |\mathcal H_k|^p\,d\mu_k
\approx N_{\text{tube}} \times |\mathcal H_k|^p \times k^{-\alpha}
= k^{\frac{\alpha+1}{2}} \times k^{\frac{p}{4}} \times k^{-\alpha}
= k^{\frac{p}{4}-\frac{\alpha-1}{2}}.
\]
Taking the $p$–th root yields
\[
\|\mathcal H_k\|_{L^p(M;\,d\mu_k)} \approx k^{\frac{1}{4}-\frac{\alpha-1}{2p}}.
\]

Finally, to construct the desired $\alpha$–dimensional measure $\mu$ on $M$, take a sparse sequence of scales $k_n=2^{2^n}$ and define $\mu_{k_n}$ as above at each scale. By arranging the supports $\Omega_\alpha^{k_n}$ to form a nested sequence (for example, by taking intersections $\Omega_n = \bigcap_{m=1}^n \Omega_{\alpha}^{k_m}$), the measures $\mu_{k_n}$ converge weakly to a limit measure $\mu$ as $n\to\infty$. The resulting $\mu$ is $\alpha$–dimensional and supported near $\gamma_0$. Consequently,
\[
\|\mathcal H_{k_n}\|_{L^p(M;\,d\mu)} \approx k_n^{\frac{1}{4}-\frac{\alpha-1}{2p}},
\]
for all $p\ge2$, completing the proof.
\end{proof}

\subsection{Sharpness of Theorem \ref{theo:cur}}

We use a construction due to Tacy \cite{Tacy} to prove the sharpness of Theorem \ref{theo:cur}. We first recall:

\begin{proposition}[Proposition 3.2 in \cite{Tacy}]
Let $0\le \beta\le \tfrac12$. For each $\lambda\gg1$ in the spectrum on $S^2$, there exists an $L^2$–normalized eigenfunction $e_\lambda$ and a region of size $\lambda^{-1+2\beta}\times\lambda^{-1+\beta}$ on which
\[
|e_\lambda|\gtrsim \lambda^{\frac{1-\beta}{2}}.
\]
This region is “rectangular’’ in the sense that it is a $\lambda^{-1+\beta}$–neighborhood of a $\lambda^{-1+2\beta}$ segment of a great circle on $S^2$.
\end{proposition}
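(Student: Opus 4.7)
The plan is to construct $e_\lambda$ explicitly as an averaged rotation of the highest-weight spherical harmonic, entirely within a single eigenspace $\mathcal{H}_k$ with $k(k+1)=\lambda^2$. Fix a point $p$ on a great circle $\gamma_0\subset S^2$ and, for each small angle $\alpha$, let $R_\alpha$ denote the rotation of $S^2$ through angle $\alpha$ about the diameter through $\pm p$. Set $H_k^{(\alpha)}:=H_k\circ R_\alpha^{-1}$, where $H_k=c_k(x_1+ix_2)^k|_{S^2}$ is the highest weight associated to $\gamma_0$. Each $H_k^{(\alpha)}$ is $L^2$-normalized, concentrates in the $\lambda^{-1/2}$-tube around the great circle through $p$ tilted by $\alpha$ relative to $\gamma_0$, and, crucially, remains in $\mathcal{H}_k$. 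With $r:=\lambda^{-\beta}$, I would define
\[
\psi_k := \int_{-r}^{r} H_k^{(\alpha)}\, d\alpha,
\]
which is still a genuine eigenfunction, and take $e_\lambda := \psi_k/\|\psi_k\|_{L^2}$.

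Two computations drive the argument. First, the Wigner $d$-function identity
\[
\langle H_k^{(\alpha)}, H_k^{(\alpha')}\rangle = \cos^{2k}\!\big((\alpha-\alpha')/2\big) \approx e^{-k(\alpha-\alpha')^2/4}
\]
yields, for $r\ge\lambda^{-1/2}$ (which covers $\beta\in[0,1/2]$), the $L^2$ estimate $\|\psi_k\|_{L^2}^2 \sim r/\sqrt{\lambda}$. Second, in Fermi coordinates $(x,y)$ around $p$ with $\gamma_0=\{y=0\}$, a standard beam expansion gives
\[
H_k^{(\alpha)}(x,y) \approx c_k\, e^{ik(x\cos\alpha+y\sin\alpha)}\, e^{-k(y\cos\alpha-x\sin\alpha)^2/2},\qquad c_k\sim\lambda^{1/4}.
\]
Substituting, Taylor expanding in $\alpha$ to quadratic order, and integrating on $[-r,r]$ produces a Gaussian/sinc integral whose main lobe gives $|\psi_k(x,y)| \gtrsim c_k\,r$ uniformly on the rectangle $R=\{|x|\le c\lambda^{-1+2\beta},\ |y|\le c\lambda^{-1+\beta}\}$: the longitudinal scale $\lambda^{-1+2\beta}$ emerges when the quadratic phase $e^{-ikx\alpha^2/2}$ becomes oscillatory at $|x|\sim 1/(\lambda r^2)$, and the transverse scale $\lambda^{-1+\beta}$ is the main-lobe width of the sinc generated by the linear phase $e^{iky\alpha}$. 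Combining the two estimates,
\[
|e_\lambda| \gtrsim \frac{c_k r}{(r/\sqrt{\lambda})^{1/2}} \sim r^{1/2}\lambda^{1/2} = \lambda^{(1-\beta)/2} \quad\text{on } R,
\]
which is the claim.

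The main obstacle is the pointwise bound on $|\psi_k|$: one must track three transverse scales simultaneously (the Gaussian-beam width $\lambda^{-1/2}$, the sinc half-width $\lambda^{-1+\beta}$, and the longitudinal quadratic-phase width $\lambda^{-1+2\beta}$) and verify that the constructive interference of the beams $H_k^{(\alpha)}$ persists throughout the rectangle, not only at the central point $p$. The endpoint $\beta=1/2$ reduces to the classical highest-weight example (here $r=\lambda^{-1/2}$ makes $\psi_k$ essentially proportional to $H_k$), while $\beta=0$ requires validating the Gaussian approximation $\cos^{2k}(\cdot/2)\approx e^{-k(\cdot)^2/4}$ up to angular separation $\sim 1$ and recovers a zonal-type concentration. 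Once these endpoint consistency checks are in place, a uniform argument across all intermediate $\beta$ follows from the same Gaussian/sinc computation.
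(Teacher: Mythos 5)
The paper does not prove this proposition: it is quoted verbatim as Proposition~3.2 of \cite{Tacy} and used directly, so there is no in-paper argument to compare against. Your construction, however, is a correct route to the statement and is essentially the standard way of interpolating between the zonal ($\beta=0$) and Gaussian-beam ($\beta=1/2$) extremal examples: average the rotated highest-weight harmonics $H_k^{(\alpha)}=H_k\circ R_\alpha^{-1}$ over a window $|\alpha|\le r=\lambda^{-\beta}$ of rotation angles about the axis through the observation point $p$. This is, in substance, the construction carried out in Tacy's cited paper.

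Your two supporting computations are correct. Since rotations about the fixed axis through $\pm p$ commute, the inner product depends only on $\alpha-\alpha'$ and equals $d^k_{kk}(\alpha-\alpha')=\cos^{2k}\!\big((\alpha-\alpha')/2\big)$; the Gaussian approximation then gives $\|\psi_k\|_{L^2}^2\approx r\,\lambda^{-1/2}$ once $r\ge\lambda^{-1/2}$, i.e.\ $\beta\le\tfrac12$. Combined with $|\psi_k(p)|\approx c_k\,r$ and $c_k\sim\lambda^{1/4}$ (from normalizing $(\cos\theta)^k e^{ik\phi}$), this yields $|e_\lambda(p)|\gtrsim\lambda^{(1-\beta)/2}$. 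The step you flag as the main obstacle --- persistence of the lower bound over the full rectangle rather than merely at $p$ --- does go through: with $k\approx\lambda$, each of the three deviations $k|y\alpha|$, $k|x|\alpha^2$, and $k(y-x\alpha)^2$ appearing in your beam expansion is bounded by a small multiple of $1$ uniformly over $|\alpha|\le\lambda^{-\beta}$, $|x|\le c\lambda^{-1+2\beta}$, $|y|\le c\lambda^{-1+\beta}$ for $c$ small enough, so there is no destructive interference in the $\alpha$-integral and $|\psi_k|\gtrsim c_k r$ holds on the whole rectangle. Thus the proposal is sound in approach and in numerology; turning it into a complete proof requires only writing out that uniformity check explicitly together with the routine endpoint validations ($\beta=0$: validity of the Gaussian approximation over $|\alpha|\lesssim 1$; $\beta=\tfrac12$: reduction to a single beam) that you already identify.
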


\begin{figure}[h]
\centering
\begin{tikzpicture}[scale=15]
    \def\de{0.04}
    \def\dx{0.24}
    \def\dy{0.05}
    \draw[blue, thick, domain=-0.2:0.2, samples=100] plot (\x, {\x*\x});
    \node[blue, below left] at (-0.15, 0.0225) {$\gamma$};
    \draw[red, thick, domain=-0.2:0.2, samples=100] plot (\x, {\x*\x + \de});
    \draw[red, thick, domain=-0.2:0.2, samples=100] plot (\x, {\x*\x - \de});
    \fill[red, opacity=0.1] (-0.2, {-0.2*-0.2 - \de}) -- plot[domain=-0.2:0.2, samples=100] (\x, {\x*\x + \de}) -- (0.2, {0.2*0.2 + \de}) -- plot[domain=0.2:-0.2, samples=100] (\x, {\x*\x - \de}) -- cycle;
    \node[red, right] at (0.2, {0.2*0.2 + \de}) {$T_{\lambda^{-1+s}}(\gamma)$};
    \draw[green!70!black, thick] (-\dx/2, -\dy/2) rectangle (\dx/2, \dy/2);
    \fill[green!70!black, opacity=0.2] (-\dx/2, -\dy/2) rectangle (\dx/2, \dy/2);
    \draw[<->] (\dx/2 + 0.005, -\dy/2) -- node[right] {$\lambda^{-1+\frac{1+s}{4}}$} (\dx/2 + 0.005, \dy/2);
    \draw[<->] (-\dx/2, -\dy/2 - 0.005) -- node[below] {$\lambda^{-1+\frac{1+s}{2}}$} (\dx/2, -\dy/2 - 0.005);
\end{tikzpicture}
\caption{$T_{\lambda^{-1+s}}(\gamma)$ and the largest $\lambda^{-1+2\beta}\times\lambda^{-1+\beta}$ region that fits inside it.}
\label{fig:tubular_neighborhood}
\end{figure}

Now let $\gamma$ have nonvanishing geodesic curvature and consider its $\lambda^{-1+s}$–tubular neighborhood with $\tfrac13\le s\le \tfrac12$. The largest “rectangular’’ region of the form $\lambda^{-1+2\beta}\times\lambda^{-1+\beta}$ that fits inside this tube has dimensions $\lambda^{-1+\frac{1+s}{2}}\times \lambda^{-1+\frac{1+s}{4}}$ (see Figure \ref{fig:tubular_neighborhood}). Hence,
\[
\lambda^{1-s}\int_{T_{\lambda^{-1+s}}(\gamma)} |e_\lambda|^2\,dx
\gtrsim
\lambda^{1-s}\cdot \lambda^{\,1-\frac{1+s}{4}}\cdot \lambda^{-1+\frac{1+s}{2}}\cdot \lambda^{-1+\frac{1+s}{4}}
= \lambda^{\frac{1-s}{2}}.
\]
Therefore,
\[
\|e_\lambda\|_{L^2_{\mathrm{avg}}(T_{\lambda^{-1+s}}(\gamma))}
\gtrsim \lambda^{\frac{1-s}{4}},
\]
which proves the sharpness of Theorem \ref{theo:cur} for $\tfrac13\le s\le \tfrac12$. When $s=\tfrac13$, the lower bound is $\lambda^{1/6}$, matching the restriction estimate \eqref{eq:5} for $\gamma$. Consequently, \eqref{eq:0 to 1/3} is also sharp for $0\le s\le \tfrac13$.

	\bibliographystyle{alpha}
\bibliography{reference}

\end{document}